\documentclass[11pt]{article}
\usepackage{mydef2col}
\usepackage{markArticle}

\graphicspath{{figs/}{./}}

\title{Diagonal Frog meets ADI: trading matrix exponentials for rational maps in the Fokker--Planck equation}

\shorttitle{Positivity-preserving ADI schemes}

\author{
\authorstyle{
Andrey Itkin\textsuperscript{1}\thanks{e-mail: \url{aitkin@nyu.edu}} \,
and Rakhymzhan Kazbek\textsuperscript{2}\thanks{e-mail: \url{rakhymzhan.kazbek@umu.se}}
}
\newline\newline
\textsuperscript{1}
\institution{FRE Department, Tandon School of Engineering, New York University, USA.} \\
\textsuperscript{2}
\institution{Department of Mathematics and Mathematical Statistics, UMEA Universitet, Sweden.}
}

\date{\today}

\begin{document}

\maketitle

\lettrineabstract{A companion paper \cite{ItkinDF2026} introduced the Diagonal Frog (DF) positivity-preserving schemes for anisotropic Fokker--Planck equations, advancing each directional substep by a Krylov-computed matrix exponential, which dominates the cost. Replacing that exponential by a rational map $r(\gamma L)$ reduces the substep to a banded solve, but the positivity argument no longer applies. We prove that for eventually exponentially positive generators the entrywise sign of $r(\gamma L)$ at large steps is decided by a single number, the value $r(\infty)$ taken on infinitely stiff modes. Nonnegativity holds above a computable threshold when $0\le r(\infty)<1$, and at most on a bounded interval, empty or vanishingly narrow in all our tests, when $r(\infty)<0$. The criterion rejects the Crank--Nicolson (trapezoidal) method, where $r(\infty)=-1$, and selects the subdiagonal Pad\'e$(0,2)$ method, which is second order, L-stable and provably positive above an explicit threshold. The resulting DF-ADI scheme costs $O(N)$ per step, keeps the implicit factorized mixed derivative unchanged, is second order in space and time, and conserves discrete mass exactly. In the strong cross-diffusion regime, however, the directional factors demand a step larger than the mixed derivative permits, so the composite second-order scheme is only empirically positive there, and the criterion serves to discriminate the well-behaved multiplicative factors from the stabilizing-correction schemes rather than to guarantee positivity. Against the Krylov exponential it runs ten to thirty-two times faster at matched accuracy in our tests with the gain growing with the mesh. We extend the construction to the backward Kolmogorov equation and to jump-diffusion models.
}

\section{Introduction}

The Fokker--Planck equation (FPE) governs the evolution of probability density functions in non-equilibrium stochastic systems \cite{Risken1996,Kwok2018}. Two structural properties of the exact solution, nonnegativity and conservation of total mass, are notoriously fragile under discretization when the diffusion tensor is anisotropic (cross-diffusion) or the dynamics contain jumps. In the companion paper \cite{ItkinDF2026} we introduced the Diagonal Frog (DF) family of finite-difference schemes for this problem. Its three building blocks are: (i) directional one-dimensional operators discretized in divergence form whose matrices are M-matrices or eventually exponentially positive (EM) matrices, so that the matrix exponential $e^{\Delta t L}$ is entrywise nonnegative either for all $\Delta t>0$ or for all $\Delta t$ above an explicit threshold $\tau_0$; (ii) a conservative one-sided discretization of the mixed derivative, advanced by a trapezoidal factor whose implicit half is solved by a factorized Picard iteration at linear cost, following \cite{Itkin3D}; and (iii) a symmetric Strang composition of these substeps, with the directional exponentials applied by polynomial Krylov methods.

The Krylov machinery is the single most expensive ingredient of that construction. Each directional substep costs $O(m^2 N + m^3)$ operations, where $m$ is the Krylov dimension, and although $m$ grows only sub-linearly with the problem size, the constants, the Arnoldi orthogonalization and the stopping criteria for the exponential action add real implementation weight. In the pricing literature the standard answer to this cost is the alternating direction implicit (ADI) paradigm: advance each spatial direction by an implicit one-dimensional solve with banded matrices, and treat the coupling terms by explicit stages or stabilizing corrections. The schemes of Douglas, Craig--Sneyd, modified Craig--Sneyd (MCS) and Hundsdorfer--Verwer (HV) are the canonical representatives, and their unconditional $L_2$-stability in the presence of mixed derivatives was established in \cite{HoutWelfert2007,HoutFoulon2010}. In \cite{Itkin2015Forward} one of these backward schemes was transposed to obtain a forward scheme for the density that is exactly consistent with its backward counterpart.

The purpose of the present paper is not to substitute one time integrator for another. At the level of the scheme the change is a single factor: every Krylov exponential $e^{sA_\alpha}$ of the DF step is replaced by a rational map $r(sA_\alpha)$, while the spatial discretization, the conservative closure, the implicit mixed factor with its factorized Picard solver, the symmetric arrangement of the substeps and the transpose duality are carried over from \cite{ItkinDF2026} unchanged. What the substitution requires is a positivity theory that the companion paper does not contain. Eventual positivity of $e^{tL}$ decides the entrywise sign of one particular map of the generator and says nothing about the others. To choose among rational maps one needs to know how the sign of $r(\gamma L)$ depends on $r$, and that is the question answered here. The criterion is the main theoretical contribution of the paper, the ADI scheme is its first application, and the exponential scheme of \cite{ItkinDF2026} is one member of the family the criterion describes rather than an alternative to it. The design of the scheme then follows from three observations ($\calA$, $\calB$ and $\calC$).

\subsection{Observation $\calA$}

The first observation concerns the directional factors. Replacing the directional exponential by a rational map changes the positivity mechanism in an essential way. Write $L$ for either of the directional matrices $A_x$, $A_y$, and $\gamma$ for the length of the substep. Where the exponential scheme advances the density by $e^{\gamma L}$, a one-step method advances it by $r(\gamma L)$, in which $r$ is the rational function that the method applies in place of the exponential, known as its stability function. Backward Euler gives $r(z)=1/(1-z)$ and the trapezoidal rule gives $r(z)=(1+z/2)/(1-z/2)$. Consistency of order $p$ means $r(z)=e^z+O(z^{p+1})$, so $r(0)=1$ always, and that normalization is what makes each substep conserve discrete mass exactly on a generator with zero column sums.

The number that decides the sign lies at the other end of the range: $r(\infty)=\lim_{|z|\to\infty}r(z)$, the factor by which the method damps infinitely stiff modes. For the first-order upwind (M-matrix) variant of the spatial scheme the backward Euler resolvent $(\mathcal I-\gamma L)^{-1}$ is nonnegative for every $\gamma>0$ and nothing interesting happens. For the second-order DF operators, which are EM but not Metzler, we prove that the resolvent is nonnegative precisely on a one-sided window $\gamma\ge\gamma_0$, a mirror image of the eventual-positivity threshold $\tau_0$ of the exponential: positivity holds for large steps and fails in the refinement limit $\Delta t\to0$.

More generally, for any such $r$, the large-step limit is $r(\gamma L)\to \Pi + r(\infty)(\mathcal I-\Pi)$, where $\Pi$ is the spectral projector onto the stationary density. The entrywise sign of that limit is decided by the single number $r(\infty)$: the limit is strictly positive whenever $0\le r(\infty)<1$, while for $r(\infty)<0$ it has a negative diagonal entry at every node whose stationary mass lies below $|r(\infty)|/(1+|r(\infty)|)$. The trapezoidal (Cayley) map, with $r(\infty)=-1$, therefore admits at most a bounded two-sided window, requiring stationary mass above one half at every node, and in our experiments this window is empty on every mesh in the advection-dominated regime. It opens only on the finest weakly eventually positive mesh we tried, and then to a width of $7\cdot10^{-4}$ in the substep, orders of magnitude below any step one would use. The scheme built on it remains positive in practice on resolved meshes, but carries no usable per-factor guarantee.

The constructive side of the criterion is the central design decision of the paper. The subdiagonal Pad\'e$(0,2)$ approximant $r(z)=1/(1-z+z^2/2)$ is second-order, L-stable and satisfies $r(\infty)=0$, hence enjoys a genuine one-sided positivity window $\gamma\ge\gamma_r$ by the same mechanism as the resolvent, and we adopt it as the flagship directional factor, retaining Crank--Nicolson as the instructive negative case. To our knowledge the connection between eventual positivity of generators \cite{NoutsosTsatsomeros2008,OleskyEtAl2009} and the positivity windows of their rational approximants has not been worked out before.

\subsection{Rational approximations of eventually positive semigroups}

The intersection of rational approximants and eventually positive semigroups bridges infinite-dimensional operator theory and numerical analysis. Because the rigorous mathematical theory of eventually positive semigroups is a relatively recent development, a unified framework explicitly dedicated to their rational approximations remains largely a gap in the published literature. The current research landscape approaches this topic by merging the foundational theory of eventual positivity with numerical positivity-preserving schemes.

\myparagraph{Foundational Theory of Eventually Positive Semigroups.}

The systematic, infinite-dimensional theory of eventually positive $C_0$-semigroups was established to describe systems where positive initial data may lead to sign-changing solutions for small times, but the solutions eventually become and remain strictly positive. \cite{daners2015eventually} formalized this phenomenon by characterizing such semigroups through Perron-Frobenius-type spectral conditions and the resolvent properties of their generators. Subsequent research has focused on the asymptotic behavior and stability of these systems. \cite{arora2021stability} proved that for eventually positive semigroups on spaces of continuous functions, the spectral bound coincides with the growth bound, a result further expanded by \cite{arora2024eventually} for broader spectral analysis. Importantly for numerical approximations, \cite{daners2017towards} demonstrated that eventual positivity is highly sensitive to perturbations; unlike standard positive semigroups, perturbing the generator by a positive operator of large norm can destroy the eventual positivity.

\myparagraph{Rational Approximations and Positivity Preservation.}

Rational approximants to the exponential function (such as Pad\'e approximants, which encompass Crank-Nicolson and Backward Euler schemes) are standard for time-stepping in numerical partial differential equations. A major challenge is designing rational approximations that remain unconditionally positive. While classical numerical analysis establishes conditions for rational functions to preserve standard positivity, \cite{BolleyCrouzeix1978,brenner1979rational}
extending this to complex PDEs remains an active area of development. The core
obstruction is classical. No rational approximation of the exponential that
preserves positivity unconditionally on every dissipative generator can exceed
first order, \cite{BolleyCrouzeix1978}. It is the parabolic counterpart of
Godunov's theorem, and it recurs throughout this paper. For instance,
\cite{guermond2020positive} developed linear rational approximation techniques
that are both positive and asymptotic-preserving in the diffusion limit for
radiation transport equations.

\myparagraph{Synthesis and Literature Gap.}

While extensive literature exists on positivity-preserving rational approximations for standard positive semigroups, the direct study of rational approximations for \emph{eventually} positive semigroups is sparse. The primary mathematical gap lies in translating the theoretical resolvent conditions from \cite{daners2015eventually} into stability criteria for discrete rational functions. Current applied research, particularly in high-order finite-difference schemes for anisotropic Fokker-Planck equations and related quantitative models, frequently encounters matrices that are eventually positive. In these contexts, specific rational approximants (like Backward Euler) are employed empirically to stabilize the asymptotic positivity, but a rigorous, generalized theoretical framework linking the degree of the rational approximant to the eventual positivity time threshold remains an open problem.

\subsection{Observation $\calB$}

The second observation concerns the mixed derivative. Classical ADI schemes treat the mixed term explicitly because an implicit treatment appears to destroy the banded structure that makes the directional solves cheap. This explicit treatment is exactly what the DF analysis identified as the source of positivity loss: the cross stencil is indefinite, the predictor stage $Y_0=(\mathcal I+\Delta t\,F)\bm p^{\,n}$ is never entrywise nonnegative unconditionally, and the $L_2$-stability theory of \cite{HoutWelfert2007}, built on normal commuting operators, says nothing about the sign of the solution.

The factorized Picard solver of \cite{Itkin3D}, however, removes the structural obstruction: it applies the resolvent of the mixed block at $O(N)$ cost through triangular banded sweeps. A fully implicit-in-mixed ADI scheme therefore becomes possible at linear complexity, and we adopt it as the flagship construction. The central factor of the scheme below is identical to the trapezoidal factor of \cite{ItkinDF2026}, and its conditional positivity window transfers verbatim.

It is worth being precise about the reach of that construction, because the same Picard solver would make the mixed block implicit inside a classical stabilizing-correction scheme as well, and one might expect the positivity theory to follow it there. It does not. A Douglas, MCS or Hundsdorfer--Verwer step is not a product of maps of the generators.

Each of its stages has the form
\[
Y_j=(\mathcal I-\theta\Delta t\,F_j)^{-1}[\,Y_{j-1}-\theta\Delta t\,F_j\bm p^{\,n}\,],
\]
and the operand in brackets carries a subtraction. Under the conservative closure $\bm1^\top F_j=0$, so $F_j\bm p^{\,n}$ sums to zero and, unless it vanishes, has entries of both signs. Entrywise nonnegativity of the factor standing in front of the bracket constrains its action on the nonnegative cone only, and is therefore of no use.

Those subtractions are not an artefact of the explicit predictor $Y_0=(\mathcal I+\Delta t\,F)\bm p^{\,n}$. They are what a stabilizing correction is, and they appear at every stage. The dividing line is multiplicative against affine, not implicit against explicit, and not first order against second order in the directional factor. \Cref{ssec:relation} develops the point and records what does transfer to the stabilizing-correction family, namely second order and exact mass conservation.

\subsection{Observation $\calC$}

A third, structural simplification is specific to the FPE on a tensor grid. When the coefficients of the directional operators are separable, the Kronecker lifts $A_x=L_x\otimes I$ and $A_y=I\otimes L_y$ commute exactly, so the directional part of the propagator factorizes without splitting error and the approximate-factorization corrections that motivate the Douglas, MCS and HV stage structures are unnecessary. The only genuinely noncommuting block is the mixed derivative, and it is treated implicitly. In the general (non-separable) case commutation fails, but the symmetric arrangement of the composite step restores second order by the same argument as for Strang splitting.

\vspace{2em}
The main results obtained in this paper are as follows. First, we establish a large-step limit criterion on $r(\infty)$ for EM generators and provide computable threshold estimates (\cref{sec:resolvent}). This analysis yields one-sided resolvent, Pad\'e$(0,2)$, and bounded Cayley positivity windows. It also places the exponential scheme of \cite{ItkinDF2026} as the $r(\infty)=0$ member of a single family.

Second, we introduce the Diagonal Frog Alternating Direction Implicit (DF-ADI) scheme (\cref{sec:dfadi}). This scheme achieves second-order accuracy in space and time with $O(N)$ cost per step. Under the conservative closure, it is exactly mass-conservative for every step size and features a fully characterized conditional positivity regime.

Third, we establish a structural delimitation of the criterion (\cref{ssec:relation}). We demonstrate that the criterion applies to steps that are multiplicative in the generators. In contrast, the affine stage structure of the Douglas, MCS, and HV schemes admits no per-factor sign control regardless of how the mixed derivative is treated. However, second-order accuracy and exact conservation do transfer to these schemes.

Fourth, we extend the approach to the backward Kolmogorov equation and jump-diffusion models (\cref{sec:BKE}). Transpose duality demonstrates that the backward scheme inherits the positivity windows of the forward scheme unchanged. Finally, we present a systematic numerical comparison against the exponential DF scheme and against HV and MCS schemes equipped with the explicit one-sided cross stencil (\cref{sec:numerics}).

One consequence of the criterion should be stated at the outset, because it shapes how the theory is used. In the strong cross-diffusion regime the upper step bound $\Theta$ set by the mixed derivative falls below the lower threshold $2\gamma_r$ set by the directional factors, so for the second-order factors the composite guarantee is vacuous on every mesh we tested; only the first-order backward Euler variant, whose threshold falls with the mesh, recovers a nonempty region there (\cref{crossWindow}). The second-order scheme is nonetheless positive to within $10^{-7}$ in practice (\cref{sec:numerics}). What the theory contributes in that regime is not a guarantee but a discrimination. It explains why the multiplicative Pad\'e$(0,2)$ factors stay well behaved, and why the classical stabilizing-correction schemes, whose stages subtract signed vectors, do not. That distinction is developed in \cref{ssec:relation} and measured in \cref{ssec:cross}.

The remainder of the paper is organized as follows. \Cref{sec:spatial} summarizes the spatial discretization of \cite{ItkinDF2026} required for our analysis. \Cref{sec:resolvent} extends the eventual-positivity theory of the matrix exponential to rational maps for the class of conservative EM generators produced by the DF discretization. \Cref{sec:dfadi} constructs the composite scheme and establishes its order, conservation, and positivity properties. \Cref{sec:BKE} addresses the backward equation and jump processes. \Cref{sec:numerics} presents the numerical experiments, and \cref{sec:conclusion} concludes.

\section{Spatial discretization: a summary} \label{sec:spatial}


We consider the two-dimensional FPE in divergence form,
\begin{equation}   \label{eq:fpe2d}
\frac{\partial p}{\partial t} = -\frac{\partial}{\partial x}[\mu_x p]     -\frac{\partial}{\partial y}[\mu_y p] + \frac{\partial^2}{\partial x^2}[\Sigma_{xx} p] + \frac{\partial^2}{\partial y^2}[\Sigma_{yy} p] + 2\frac{\partial^2}{\partial x\,\partial y}[\Sigma_{xy} p],
\end{equation}
on a tensor grid $\{x_i\}_{i=1}^{N_x}\times\{y_j\}_{j=1}^{N_y}$ with steps $h_x,h_y$, and decompose the right-hand side as
\begin{equation}   \label{eq:split}
\frac{\partial p}{\partial t} = (\mathcal{L}_x + \mathcal{L}_y + \mathcal{L}_{xy})\,p,
\end{equation}
with $\mathcal L_x$, $\mathcal L_y$ the one-dimensional Fokker--Planck operators along the coordinate directions and $\mathcal L_{xy}$ the mixed block.

From \cite{ItkinDF2026} we import the following objects and facts without proof.

\myparagraph{Directional operators.} The 1D discretizations $L_x\in\R^{N_x\times N_x}$, $L_y\in\R^{N_y\times N_y}$ are assembled in discrete flux (divergence) form with a zero-flux boundary closure, so that $\bm 1^\top L_\alpha=0$ on the uniform meshes assumed throughout the analysis. This zero-column-sum property, which we call the conservative closure throughout, is the discrete statement that total probability is exactly conserved. On a graded mesh the identity is replaced by the cell-mass weighted $\bm\Delta^\top L_\alpha=0$, and \cref{ssec:nonuniform} shows that every statement below transfers unchanged once read in the cell-mass variable. Their Kronecker lifts are $A_x=L_x\otimes I_{N_y}$ and $A_y=I_{N_x}\otimes L_y$; when the coefficients of $L_x$ depend only on $x$ and those of $L_y$ only on $y$ these lifts commute, $A_xA_y=A_yA_x$. Depending on the variant of the scheme, $L_\alpha$ is either an M-matrix generator (Metzler, first-order convection treatment) or an EM matrix: $e^{tL_\alpha}\ge0$ for all $t\ge\tau_0^{(\alpha)}$ with an explicit threshold $\tau_0^{(\alpha)}\ge0$ computed from the full matrix including boundary rows. The spectral abscissa of $L_\alpha$ is $0$, the eigenvalue $0$ is simple for irreducible $L_\alpha$, and the associated right eigenvector is the discrete stationary density.

\myparagraph{Mixed operator.} With the separability assumption $2\Sigma_{xy}=\rho\,w_1(x,t)w_2(y,t)$, $w_1,w_2\ge0$, the mixed block is discretized by the conservative one-sided product
\begin{equation} \label{mixProduct}
A_{xy} = \rho\,\mathcal A^{\mathrm F}_{2,x}\mathcal A^{\mathrm B}_{2,y},
\end{equation}
with orientations swapped for $\rho<0$. The operator has zero column sums under the conservative closure, real non-positive spectrum, and is strongly non-normal; no eventual-positivity property holds for it.

\myparagraph{Central factor.} The mixed block is advanced by the trapezoidal factor
\begin{equation}\label{eq:central}
\Phi_{xy}(\Delta t) \;=\;
\Bigl(\mathcal I-\tfrac{\Delta t}{2}A_{xy}\Bigr)^{-1}
\Bigl(\mathcal I+\tfrac{\Delta t}{2}A_{xy}\Bigr),
\end{equation}
whose implicit half is solved by the factorized Picard iteration of \cite{Itkin3D,ItkinDF2026} at $O(N_xN_y)$ cost per iteration. The factor is second-order in time, conserves mass exactly under the conservative closure, and is entrywise nonnegative on a step-size window $\Delta t\le\Theta$ characterized in \cite{ItkinDF2026}; the two coupling variants (first-order Scheme A, second-order Scheme B with the log-Lipschitz resolution condition) transfer unchanged.

\myparagraph{Mass conservation of rational substeps.} For any matrix $M$ with $\bm1^\top M=0$ and any rational $r$ with $r(0)=1$ whose poles avoid $\operatorname{spec}(\Delta t\,M)$, one has $\bm1^\top r(\Delta t\,M)=\bm1^\top$. This corollary of \cite{ItkinDF2026} is used repeatedly below, since every factor of the ADI scheme is of exactly this form.

\section{Positivity of rational maps of EM operators} \label{sec:resolvent}

The exponential DF scheme rests on eventual positivity of $e^{tL}$. The ADI scheme applies rational functions of $L$ instead, and this section extends that theory to them. The extension is not general. It is confined throughout to the class of generators the DF discretization produces: $L\in\R^{n\times n}$ is irreducible, has zero column sums (conservative closure), spectral abscissa $\lambda_1=0$ with simple eigenvalue $0$, and is eventually exponentially positive: $e^{tL}>0$ for all $t\ge\tau_0$. By \cite{NoutsosTsatsomeros2008}, this is equivalent to the existence of a shift $a\ge0$ such that $L+a\mathcal I$ is eventually positive, i.e. possesses the strong Perron--Frobenius property together with its transpose. We write $\Pi=\lim_{t\to\infty}e^{tL}$ for the spectral projector onto the stationary density $\bm p^\infty$ along $\operatorname{ran}L$; entrywise $\Pi_{ij}=p^\infty_i>0$. We also define a function $r$ to be A-acceptability if it places every pole in the open right half-plane.

Throughout this section $\gamma$ denotes the length of a single directional
substep. In the composite scheme of \cref{sec:dfadi} it equals $\Delta t/2$,
so every threshold on $\gamma$ bounds the substep, and the corresponding
composite step is twice that threshold.

\subsection{The backward Euler resolvent} \label{ssec:beresolvent}

\begin{theorem}[Resolvent window] \label{thm:resolvent}
Let $L$ be as above. Then there exists $\gamma_0\in[0,\infty)$ such that
\begin{equation}\label{eq:reswindow}
(\mathcal I-\gamma L)^{-1} > 0 \qquad \text{for all } \gamma \ge \gamma_0 .
\end{equation}
Moreover, if $L$ has at least one negative off-diagonal entry (the EM case proper), then there exists $\gamma_1>0$ such that $(\mathcal I-\gamma L)^{-1}$ has a negative entry for all $\gamma\in(0,\gamma_1)$; in particular $\gamma_0>0$ and positivity fails in the refinement limit $\Delta t\to0$ at fixed mesh. If $L$ is Metzler (the M-matrix variant), then $\gamma_0=0$ and \eqref{eq:reswindow} holds for all $\gamma>0$.
\end{theorem}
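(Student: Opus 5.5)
Three parts: a large-$\gamma$ limit for \eqref{eq:reswindow}, a Neumann expansion for small $\gamma$, and the classical M-matrix argument for the Metzler case.

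\textbf{Large steps.} First note that $\mathcal I-\gamma L$ is invertible for every $\gamma>0$, since $1/\gamma\notin\operatorname{spec}(L)$ (the spectral abscissa is $0$). Next decompose along the spectral projector: $L=L(\mathcal I-\Pi)$ with $L\Pi=\Pi L=0$. Then
\[
(\mathcal I-\gamma L)^{-1}=\Pi+(\mathcal I-\gamma L)^{-1}(\mathcal I-\Pi).
\]
On $\operatorname{ran}(\mathcal I-\Pi)$ the restriction $L'$ of $L$ is invertible, because $0$ is simple and the remaining eigenvalues have $\operatorname{Re}\lambda<0$. Hence $(\mathcal I-\gamma L')^{-1}=-\gamma^{-1}(L')^{-1}(\mathcal I-\gamma^{-1}(L')^{-1})^{-1}$, whose norm is $O(1/\gamma)$. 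So $(\mathcal I-\gamma L)^{-1}\to\Pi$ as $\gamma\to\infty$, with error at most $C/\gamma$ with $C\approx\|(L')^{-1}(\mathcal I-\Pi)\|$. Since $\Pi_{ij}=p^\infty_i>0$, positivity holds once $C/\gamma<\min_i p^\infty_i$. This defines a finite threshold, and $\gamma_0$ is the infimum of $\gamma'$ such that positivity holds on $[\gamma',\infty)$.

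An alternative worth recording, because it connects to $\tau_0$, is the Laplace representation
\[
(\mathcal I-\gamma L)^{-1}=\int_0^\infty e^{-s}e^{s\gamma L}\,ds .
\]
For $s\ge\tau_0/\gamma$ the integrand is nonnegative, and the contribution of the interval $[0,\tau_0/\gamma]$ is $O(\tau_0/\gamma)$. Along the same lines the tail approaches $\Pi$ from the positive side, which gives the explicit estimate of $\gamma_0$ used later.

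\textbf{Small steps, EM case proper.} Pick an off-diagonal entry $L_{ij}<0$ with $i\ne j$. For $\gamma\|L\|<1$ the Neumann series gives
\[
(\mathcal I-\gamma L)^{-1}=\mathcal I+\gamma L+O(\gamma^2).
\]
The $(i,j)$ entry is therefore $\gamma L_{ij}+O(\gamma^2)$, which is negative for $\gamma<\gamma_1$ with $\gamma_1$ explicit from the remainder bound, for example $\gamma_1=|L_{ij}|/(\|L\|^2/(1-\gamma\|L\|))$ solved in the obvious way. Consequently $\gamma_0\ge\gamma_1>0$.

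\textbf{Metzler case.} Write $L=-D+B$, with $B\ge0$ the off-diagonal part and $D$ the diagonal with $d_i=\sum_{k\ne i}B_{ki}$ (zero column sums). Then $\mathcal I-\gamma L=(\mathcal I+\gamma D)-\gamma B$ is a Z-matrix. It is column diagonally dominant with strict dominance $1$ in every column, so it is a nonsingular M-matrix and its inverse is nonnegative. Irreducibility of $L$ transfers to $B$, and the inverse of an irreducible nonsingular M-matrix is strictly positive. So $\gamma_0=0$.

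\textbf{Main obstacle.} The delicate point is the one-sidedness claim that \eqref{eq:reswindow} holds on the whole half-line $[\gamma_0,\infty)$ rather than on a union of windows. I would handle it by defining $\gamma_0$ as the infimum of thresholds $\gamma'$ such that positivity holds for all $\gamma\ge\gamma'$. This infimum exists by the large-$\gamma$ limit and is exactly what the statement asserts, so no monotonicity of the entries in $\gamma$ is needed.
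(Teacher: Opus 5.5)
Your proposal is correct and is essentially the paper's proof. It uses the same splitting $(\mathcal I-\gamma L)^{-1}=\Pi+(\mathcal I-\gamma L)^{-1}(\mathcal I-\Pi)$, with the second term $O(1/\gamma)$ because $L$ is invertible on $\operatorname{ran}L$; the paper phrases this as boundedness of the reduced resolvent $R(s)$ near $s=0$, where $s=1/\gamma$. It uses the same Neumann-series argument for small $\gamma$. In the Metzler case your strict column diagonal dominance of $\mathcal I-\gamma L$ (from the zero column sums) is an equally valid substitute for the paper's observation that its eigenvalues have real part at least $1$.

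Your closing worry is unnecessary, and taking the infimum actually costs something. Any threshold strictly inside the half-line certified by your large-$\gamma$ estimate already gives strict positivity on all of $[\gamma_0,\infty)$, and $\gamma_0\ge\gamma_1$ holds for every such admissible $\gamma_0$. By contrast, for the sharp infimum $\gamma_0>0$, continuity forces some entry to vanish at $\gamma_0$ itself. The paper makes the same slip by reserving $\gamma_0$ for the sharp threshold.
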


\begin{proof}
See \cref{app:resolvent}.
\end{proof}

\begin{myremark}[The Zeno mirror]\label{rem:zenomirror}
\Cref{thm:resolvent} is the exact resolvent analog of the eventual-positivity threshold of the exponential: nonnegativity holds for time steps above a threshold and fails below it, so refining the temporal grid at fixed spatial mesh eventually destroys the sign of the solution. In the exponential case the mechanism was the accumulation of infinitely many small substeps; here it is the dominance of the stationary projector in the Laplace transform of the semigroup near the spectral abscissa. Both thresholds are global properties of the full matrix, boundary rows included.
\end{myremark}

\subsection{The large-step limit and the $r(\infty)$ criterion} \label{ssec:criterion}

Every one-step rational realization of the directional step is of the form $r(\gamma L)$ with $r(0)=1$. The following theorem reduces the large-step entrywise analysis of all such maps to a single number.

\begin{theorem}[Large-step limit]\label{thm:limit}
Let $L$ be as above and let $r$ be an A-acceptable rational function with $r(0)=1$ and finite $r(\infty)$, whose poles avoid $\operatorname{spec}(\gamma L)$ for all $\gamma>0$. Then
\begin{equation}\label{eq:limitmatrix}
\lim_{\gamma\to\infty} r(\gamma L) \;=\; \Pi \;+\; r(\infty)\,(\mathcal I-\Pi),
\end{equation}
with entries $p^\infty_i\,(1-r(\infty))$ off the diagonal and $p^\infty_i + r(\infty)\,(1-p^\infty_i)$ on it.
\end{theorem}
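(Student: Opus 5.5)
We will prove \eqref{eq:limitmatrix} by the holomorphic functional calculus, splitting the spectrum of $L$ into the simple eigenvalue $0$ and the rest. Since $0$ is a simple eigenvalue with right eigenvector $\bm p^\infty$ and left eigenvector $\bm 1$ (zero column sums), and $\bm 1^\top\bm p^\infty=1$, the Riesz projector onto the zero eigenspace is exactly $\Pi=\bm p^\infty\bm 1^\top$. This coincides with $\lim_{t\to\infty}e^{tL}$, because every other eigenvalue has negative real part; eventual positivity gives strict dominance of the Perron root $\lambda_1=0$, so no other eigenvalue sits on the imaginary axis. Both $\Pi$ and $\mathcal I-\Pi$ commute with $L$ and reduce it. On $\operatorname{ran}\Pi$, $L$ acts as $0$. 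On $\operatorname{ran}(\mathcal I-\Pi)=\operatorname{ran}L$, the restriction $L'=L|_{\operatorname{ran}L}$ is invertible with spectrum in $\{\operatorname{Re}z<0\}$.

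\textbf{Step 1: split $r(\gamma L)$.} The rational calculus respects this reduction, so
\[
r(\gamma L)=r(0)\,\Pi+r(\gamma L)(\mathcal I-\Pi)=\Pi+r(\gamma L')\oplus 0 .
\]
Well-definedness for every $\gamma>0$ is guaranteed by the hypothesis that the poles avoid $\operatorname{spec}(\gamma L)$. In fact A-acceptability already gives this: the poles lie in the open right half-plane, while $\operatorname{spec}(\gamma L)\subset\{\operatorname{Re}z\le0\}$.

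\textbf{Step 2: the limit on $\operatorname{ran}L$.} Since $r$ has finite value at infinity, write $r(z)=r(\infty)+q(z)$, where $q$ is a proper rational function with $q(z)=O(1/|z|)$ as $|z|\to\infty$. Equivalently, $q(z)=\sum_k\sum_j c_{kj}(z-z_k)^{-j}$ by partial fractions over the poles $z_k$. It then suffices to show $q(\gamma L')\to0$. For a single term,
\[
(\gamma L'-z_k)^{-j}=\gamma^{-j}\bigl(L'-z_k/\gamma\bigr)^{-j}.
\]
As $\gamma\to\infty$, the matrix $L'-z_k/\gamma$ tends to $L'$, which is invertible. By continuity of the inverse its inverse stays bounded, so the term is $O(\gamma^{-j})\to0$. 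Hence $r(\gamma L')\to r(\infty)\,\mathcal I$ on $\operatorname{ran}L$, and consequently $r(\gamma L)\to\Pi+r(\infty)(\mathcal I-\Pi)$.

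\textbf{Step 3: entries.} With $\Pi_{ij}=p^\infty_i$, the limit equals $(1-r(\infty))\Pi+r(\infty)\mathcal I$. Its off-diagonal entries are therefore $p^\infty_i(1-r(\infty))$, and its diagonal entries are $p^\infty_i(1-r(\infty))+r(\infty)=p^\infty_i+r(\infty)(1-p^\infty_i)$.

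The main obstacle is the careful justification that $0$ is semisimple and isolated from the rest of the spectrum on the imaginary axis. Only this ensures that $\Pi$ is a genuine spectral projector with $L'$ invertible. Without it, $\gamma L'$ could retain nilpotent or purely imaginary parts, and $q(\gamma L')$ would fail to decay. Simplicity of $0$ gives semisimplicity directly. We will cite the Perron–Frobenius property from \cite{NoutsosTsatsomeros2008} for the strict dominance $\operatorname{Re}\lambda<0$ of the remaining eigenvalues, although invertibility of $L'$ alone suffices for Step 2.
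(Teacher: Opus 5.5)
Your proof is correct. It shares the paper's first step, splitting along $\ker L\oplus\operatorname{ran}L$ so that the $\ker L$ part contributes $r(0)\Pi=\Pi$, but it obtains the limit on $\operatorname{ran}L$ by a different mechanism.

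The paper fixes a Jordan factorization of $L_0=L|_{\operatorname{ran}L}$ and expands $r(\gamma J_b)=\sum_k\gamma^k r^{(k)}(\gamma\lambda_b)N_b^k/k!$. It then uses the Laurent expansion of $r$ at infinity to get $r^{(k)}(z)=O(|z|^{-(k+1)})$, which beats the $\gamma^k$ growth of the nilpotent terms. You instead subtract $r(\infty)$, expand the strictly proper remainder in partial fractions, and rescale each term as $\gamma^{-j}(L_0-z_k/\gamma)^{-j}$. Boundedness then comes from continuity of inversion at the invertible $L_0$.

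Your route avoids the Jordan form entirely. Nontrivial Jordan blocks and the non-normality of $L_0$ are absorbed into $\|(L_0-z_k/\gamma)^{-1}\|$, and the constant in $\|r(\gamma L_0)-r(\infty)\mathcal I\|=O(\gamma^{-1})$ is governed by $\|L_0^{-1}\|$ rather than by a fixed similarity. The same rescaling, applied directly to a denominator, also gives faster decay. For example, $r_{02}(\gamma L_0)=\gamma^{-2}\bigl(\tfrac12L_0^2-\gamma^{-1}L_0+\gamma^{-2}\mathcal I\bigr)^{-1}$ yields the $O(\gamma^{-2})$ behaviour behind the Pad\'e bound of \cref{ssec:quantitative} without an eigenvector condition number.

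The paper's route, in turn, keeps the mode-by-mode picture $r(\gamma\lambda_b)\to r(\infty)$ that underlies the phrase ``infinitely stiff modes'' and the eigenvector-based estimates of that section. As you note, both arguments really need only $0\notin\operatorname{spec}(L_0)$, not the gap $\delta$.

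Your identification of $\Pi=\lim_{t\to\infty}e^{tL}$ with the Riesz projector is also more careful than the appendix. You apply Perron's theorem to $e^{tL}>0$ to exclude nonzero eigenvalues on the imaginary axis. The appendix instead infers $\operatorname{Re}\lambda<0$ for $\lambda\neq0$ from simplicity of $0$ and spectral abscissa $0$ alone, which does not by itself rule out eigenvalues $\pm\mathrm{i}\omega$.
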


\begin{proof}
See \cref{app:limit}.
\end{proof}

\begin{corollary}[The $r(\infty)$ criterion]\label{cor:rinf}
Under the assumptions of \cref{thm:limit}:
\begin{enumerate}
\item[\textup{(a)}] If $0\le r(\infty)<1$, the limit \eqref{eq:limitmatrix} is strictly positive, and there exists $\gamma_r<\infty$ such that $r(\gamma L)>0$ for all $\gamma\ge\gamma_r$.
\item[\textup{(b)}] If $r(\infty)<0$, the limit has a negative diagonal entry at every node with $p^\infty_i<|r(\infty)|/(1+|r(\infty)|)$; on any mesh containing such a node, nonnegativity of $r(\gamma L)$ can hold at most on a bounded window.
\item[\textup{(c)}] Assume in addition that $r'(0)=1$, the first-order consistency condition. If $L$ has a negative off-diagonal entry, then $r(\gamma L)$ has a negative entry for all sufficiently small $\gamma>0$, and every window is bounded away from zero.
\end{enumerate}
\end{corollary}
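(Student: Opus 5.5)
Each part follows from \cref{thm:limit} plus a continuity or Taylor argument, so the plan is to take the three items in turn.

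For (a), I would read the sign of the limit matrix \eqref{eq:limitmatrix} directly from \cref{thm:limit}. Its off-diagonal entries are $p^\infty_i(1-r(\infty))$. These are strictly positive, since $p^\infty_i>0$ and $r(\infty)<1$. Its diagonal entries are $p^\infty_i+r(\infty)(1-p^\infty_i)$. These are at least $p^\infty_i>0$ when $r(\infty)\ge0$. So the limit $M_\infty$ is strictly positive, and I set $\delta=\min_{ij}(M_\infty)_{ij}>0$. Because $r(\gamma L)\to M_\infty$ entrywise as $\gamma\to\infty$, there is $\gamma_r$ with $\max_{ij}|r(\gamma L)-M_\infty|_{ij}<\delta$ for all $\gamma\ge\gamma_r$, and that gives $r(\gamma L)>0$ there.

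To make $\gamma_r$ computable, I would use the spectral decomposition from the proof of \cref{thm:limit}. Write $r(\gamma L)-M_\infty=\big(r(\gamma L)-r(\infty)\mathcal I\big)(\mathcal I-\Pi)$. Since $r$ is finite at infinity, $r(z)-r(\infty)=O(1/z)$. On $\operatorname{ran}(\mathcal I-\Pi)$ the nonzero spectrum of $L$ lies in the open left half-plane, so this term is $O(\gamma^{-1})$. The constant involves the size of $(L|_{\operatorname{ran}L})^{-1}$, and that yields an explicit bound.

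For (b), set $q=|r(\infty)|$, so $r(\infty)=-q$. The diagonal limit is $p^\infty_i-q(1-p^\infty_i)=p^\infty_i(1+q)-q$. This is negative exactly when $p^\infty_i<q/(1+q)$. At such a node $i$, continuity gives $[r(\gamma L)]_{ii}<0$ for all $\gamma$ beyond some $\Gamma$. Any set of $\gamma$ on which $r(\gamma L)\ge0$ is therefore contained in $(0,\Gamma]$, which is bounded.

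For (c), I would Taylor expand at $\gamma=0$. Since $r(0)=1$ and $r'(0)=1$, and $r$ is analytic near $0$ (its poles lie in the right half-plane, away from the origin), we have $r(\gamma L)=\mathcal I+\gamma L+O(\gamma^2)$ entrywise. Pick an entry with $L_{ij}<0$ and $i\ne j$. Then $[r(\gamma L)]_{ij}=\gamma L_{ij}+O(\gamma^2)$, which is negative for all $\gamma$ in some interval $(0,\gamma_1)$. Hence every positivity window lies in $[\gamma_1,\infty)$. This is the same mechanism as in \cref{thm:resolvent}.

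The main obstacle is only the quantitative threshold in (a). The qualitative statement is immediate from entrywise convergence. An explicit $\gamma_r$ needs a uniform bound on $(r(\gamma L)-r(\infty))(\mathcal I-\Pi)$, and this bound can degrade through the non-normality of $L$. My first attempt would bound it through the resolvent-type representation $r(z)-r(\infty)=\sum_k c_k/(z-z_k)$, a partial-fraction form valid when the poles $z_k$ are simple, applied on $\operatorname{ran}L$.
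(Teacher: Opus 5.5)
Your proposal is correct and follows the paper's proof essentially step for step. Parts (a) and (b) read the sign of the limit $\Pi+r(\infty)(\mathcal I-\Pi)$ and pass to large $\gamma$ by entrywise convergence, and part (c) uses the convergent power series $r(\gamma L)=\mathcal I+\gamma L+\gamma^2G(\gamma)$ near $\gamma=0$, so that a negative off-diagonal entry of $L$ dominates. The $O(\gamma^{-1})$ estimate you sketch for an explicit $\gamma_r$ is not needed, because the corollary only asserts that such a $\gamma_r$ exists; the paper treats explicit a priori bounds separately, in its quantitative-thresholds subsection.
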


\begin{proof}
See \cref{app:rinf}.
\end{proof}

The backward Euler resolvent is the case $r(\infty)=0$ of part (a), recovering \cref{thm:resolvent}. The two subsections below specialize the criterion to the two second-order candidates.

The criterion separates two properties that are easily conflated, and it is worth recording the
separation as a statement in its own right.

\begin{corollary}[Order is not the obstruction]\label{cor:price}
Let $L$ and $r$ satisfy the assumptions of \cref{thm:limit}; in particular $r$ is
A-acceptable with $r(0)=1$ and finite $r(\infty)$. The order of
accuracy of $r$ places no obstruction on the entrywise sign of $r(\gamma L)$ in the large-step
limit; by \cref{thm:limit} that sign is decided by the single number $r(\infty)$. Consequently
two maps of the same order may lie on opposite sides of the criterion: the Cayley map, with
$r(\infty)=-1$, falls under \cref{cor:rinf}(b) and admits at most a bounded window, while the
Pad\'e$(0,2)$ map, equally of order two but with $r(\infty)=0$, falls under \cref{cor:rinf}(a)
and admits a one-sided window. The exponential is the limiting member of the same class,
behaving as $r(\infty)=0$.
\end{corollary}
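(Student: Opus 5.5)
The plan is to treat \cref{cor:price} as a bookkeeping consequence of \cref{thm:limit} and \cref{cor:rinf}, checking the hypotheses for the two named maps, plus a short argument for the exponential. First I would note that the limit matrix in \eqref{eq:limitmatrix} depends on $r$ only through the scalar $r(\infty)$. Two A-acceptable maps with $r(0)=1$ and the same value at infinity therefore have the same large-step limit, whatever their orders. The order of $r$ is fixed by Taylor coefficients at $z=0$, i.e. by the behaviour of $r(\gamma L)$ as $\gamma\to0$. The value at infinity is a separate datum. This separation is the content of the first sentence, and it becomes concrete once one exhibits two maps of equal order with different $r(\infty)$.

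Second, I would verify the hypotheses for the two examples.

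For the Cayley map $r(z)=(1+z/2)/(1-z/2)$:
\begin{itemize}
\item the only pole is $z=2$, in the open right half-plane;
\item $r(0)=1$ and $r(\infty)=-1$;
\item $r(z)-e^z=O(z^3)$, so the order is two.
\end{itemize}
The spectrum of $\gamma L$ lies in the closed left half-plane, because the spectral abscissa is $0$. Hence the pole avoids $\operatorname{spec}(\gamma L)$, and \cref{cor:rinf}(b) applies with $|r(\infty)|/(1+|r(\infty)|)=1/2$. Any node with $p^\infty_i<1/2$ gives a negative diagonal entry in the limit. Such a node always exists when $n\ge3$, since $\sum_i p^\infty_i=1$ allows at most one node with mass at least $1/2$. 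Hence the window is bounded.

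For Pad\'e$(0,2)$, $r(z)=1/(1-z+z^2/2)$:
\begin{itemize}
\item the poles are $z=1\pm i$, in the open right half-plane;
\item $r(0)=1$ and $r(\infty)=0$;
\item the expansion $1+z+z^2/2+O(z^3)$ gives order two.
\end{itemize}
\Cref{cor:rinf}(a) then yields the one-sided window $\gamma\ge\gamma_r$.

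Third, for the exponential, which is not rational, I would argue directly from the spectral decomposition. Since $0$ is simple and every other eigenvalue has negative real part, $e^{\gamma L}=\Pi+e^{\gamma L}(\mathcal I-\Pi)$, and the second term tends to $0$. So $\lim_{\gamma\to\infty} e^{\gamma L}=\Pi$, which is \eqref{eq:limitmatrix} with $r(\infty)=0$. This matches $\lim_{\operatorname{Re} z\to-\infty}e^z=0$ along the relevant directions, and EM positivity gives the one-sided window $t\ge\tau_0$.

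The main obstacle is only interpretive: what precisely ``order places no obstruction'' should mean as a proposition. I would state it as the existence claim just verified, namely two order-two maps with opposite large-step sign behaviour, rather than attempt a universal statement.
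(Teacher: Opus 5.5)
Your proposal is correct and follows the paper's own argument. The paper's proof is exactly your first paragraph: the limit \eqref{eq:limitmatrix} depends on $r$ only through $r(\infty)$, while the order conditions constrain only the Taylor coefficients at the origin. Your explicit checks fill in details the paper leaves implicit: the poles, the values $r(0)$ and $r(\infty)$, and the orders of the Cayley and Pad\'e$(0,2)$ maps. The same holds for your spectral argument $e^{\gamma L}\to\Pi$ for the exponential, which the paper only records in \cref{rem:whyexp}.
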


\begin{proof}
Immediate from \cref{thm:limit} and \cref{cor:rinf}, since the limit \eqref{eq:limitmatrix}
depends on $r$ only through $r(\infty)$, and the order conditions constrain the Taylor
coefficients of $r$ at the origin, which do not enter that limit.
\end{proof}

What Crank--Nicolson pays for is therefore not its second order but its negative value at
infinity, a point we return to quantitatively in \cref{rem:whyexp} and \cref{stiffRing}.

\subsection{The trapezoidal (Cayley) map} \label{ssec:cayley}

The Cayley (Crank--Nicolson) factor has $r(z)=(1+z/2)/(1-z/2)$ with $r(\infty)=-1$, the negative case of \cref{cor:rinf}.

Three names circulate for objects built from this rational function, and we fix them here once.
\emph{Cayley} refers to the map $z\mapsto(1+z/2)/(1-z/2)$ itself, and to windows and thresholds
of that map. \emph{Crank--Nicolson} names a scheme whose time stepping is this map, and it is
the label used for the directional realization of \eqref{eq:dfadi} and in the tables of
\cref{sec:numerics}. \emph{Trapezoidal} is used as the descriptor of the underlying time
discretization, so that the two Crank--Nicolson schemes compared in \cref{sec:numerics} can be
referred to jointly as the trapezoidal ones, and so that the central factor \eqref{eq:central},
which applies the same rule to $A_{xy}$ but is one factor of a scheme rather than a scheme, can
be named without suggesting that the whole composite step is Crank--Nicolson.

The identity
\begin{equation}\label{eq:cayleyident}
r(\gamma L) \;=\; -\,\mathcal I \;+\; 2\,\Bigl(\mathcal I-\tfrac{\gamma}{2}L\Bigr)^{-1}
\end{equation}
reduces its entrywise analysis to that of the resolvent. Off the diagonal, $r(\gamma L)$ and the resolvent at shift $\gamma/2$ have entries of the same sign. On the diagonal, nonnegativity of $r(\gamma L)$ requires the resolvent diagonal to be at least $1/2$, and this is where the large-step behavior departs from \cref{thm:resolvent}.

\begin{theorem}[Cayley window]\label{thm:cayley}
Let $L$ be as above and let $\bm p^\infty$ be its stationary density. Then:
\begin{enumerate}
\item[\textup{(a)}] $r(\gamma L)\to -\mathcal I+2\Pi$ entrywise as $\gamma\to\infty$; hence, if $p^\infty_i<1/2$ for some node $i$, there exists $\gamma_+<\infty$ such that $r(\gamma L)$ has a negative diagonal entry for all $\gamma>\gamma_+$.
\item[\textup{(b)}] If $L$ has a negative off-diagonal entry, there exists $\gamma_->0$ such that $r(\gamma L)$ has a negative entry for all $\gamma\in(0,\gamma_-)$.
\item[\textup{(c)}] Nonnegativity of $r(\gamma L)$ can therefore hold at most on a bounded window $\gamma\in[\gamma_-,\gamma_+]$. The window is nonempty whenever the resolvent window of \cref{thm:resolvent} at shift $\gamma/2$ intersects the set $\{\gamma:\operatorname{diag}(\mathcal I-\tfrac{\gamma}{2}L)^{-1}\ge\tfrac12\}$.
\end{enumerate}
\end{theorem}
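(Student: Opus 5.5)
The plan is to derive all three parts from the identity \eqref{eq:cayleyident}, $r(\gamma L)=-\mathcal I+2R(\gamma/2)$ with $R(s)=(\mathcal I-sL)^{-1}$. This reduces everything to the resolvent, where \cref{thm:resolvent} and \cref{thm:limit} already do the work. First I check that the identity is legitimate for every $\gamma>0$. The spectrum of $L$ lies in the closed left half-plane, and $0$ is its only point on the imaginary axis, so $1-(\gamma/2)\lambda\neq0$ for all $\lambda\in\operatorname{spec}L$. The Cayley function is A-acceptable with its pole at $z=2$, $r(0)=1$ and $r(\infty)=-1$, so it meets the hypotheses of \cref{thm:limit}.

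For part (a), I apply \cref{thm:limit} with $r(\infty)=-1$ to get $r(\gamma L)\to\Pi-(\mathcal I-\Pi)=-\mathcal I+2\Pi$. A direct alternative is to decompose $R(s)=\Pi+(\mathcal I-sL)^{-1}(\mathcal I-\Pi)$ and observe that on $\operatorname{ran}L$ the restricted operator $L$ is invertible, so $(\mathcal I-sL)^{-1}(\mathcal I-\Pi)=O(1/s)\to0$. The diagonal of the limit is $2p^\infty_i-1$, which is negative whenever $p^\infty_i<1/2$. Entrywise convergence then gives a $\gamma_+$ beyond which the $(i,i)$ entry stays negative. This is just the specialization of \cref{cor:rinf}(b), since $|r(\infty)|/(1+|r(\infty)|)=1/2$.

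For part (b), the off-diagonal entries of $r(\gamma L)$ equal $2R(\gamma/2)_{ij}$. By \cref{thm:resolvent}, when $L$ has a negative off-diagonal entry the resolvent $R(s)$ has a negative entry for all $s\in(0,\gamma_1)$. That theorem's argument goes through the expansion $R(s)=\mathcal I+sL+O(s^2)$, so the negative entry sits off the diagonal, at the position of a negative $L_{ij}$. I will re-derive this expansion directly to be safe. For $i\neq j$ with $L_{ij}<0$ it gives $r(\gamma L)_{ij}=\gamma L_{ij}+O(\gamma^2)<0$ for small $\gamma$, and I set $\gamma_-=2\gamma_1$ (or read the bound off the expansion). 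The same conclusion also follows from \cref{cor:rinf}(c), since $r'(0)=1$.

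Part (c) combines the two. Nonnegativity fails on $(0,\gamma_-)$ and, when some $p^\infty_i<1/2$, also on $(\gamma_+,\infty)$, so any window lies inside $[\gamma_-,\gamma_+]$. For the nonemptiness criterion, the identity shows that $r(\gamma L)\ge0$ holds exactly when the off-diagonal entries of $R(\gamma/2)$ are nonnegative and the diagonal entries of $R(\gamma/2)$ are at least $1/2$. On the resolvent window $\gamma/2\ge\gamma_0$ we have $R(\gamma/2)>0$, which covers the first condition, so any $\gamma$ in the stated intersection is admissible. I expect the only delicate step to be the bookkeeping that the small-$\gamma$ negative entry is off-diagonal, so that it transfers through the factor $2$ without the $-\mathcal I$ shift. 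The first-order expansion settles this, because the diagonal of $r(\gamma L)$ is $1+O(\gamma)>0$ for small $\gamma$.
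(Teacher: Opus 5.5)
Your proposal is correct and follows the paper's proof essentially step for step: the identity $r(\gamma L)=-\mathcal I+2(\mathcal I-\tfrac{\gamma}{2}L)^{-1}$, the resolvent splitting $\Pi+O(1/\gamma)$ for (a), the transfer of the small-step resolvent failure through the off-diagonal entries for (b), and the entrywise reading for the nonemptiness criterion in (c) (off-diagonal resolvent entries nonnegative, diagonal entries at least $\tfrac12$). The only thing the paper adds is that the hypothesis you carry in (c), that some node has $p^\infty_i<1/2$, holds automatically for $n\ge3$ because $\min_i p^\infty_i\le 1/n$; the upper endpoint $\gamma_+$ therefore exists in every case except the degenerate symmetric two-node one.
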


\begin{proof}
See \cref{app:cayley}.
\end{proof}

\begin{myremark}[Empirical emptiness of the Cayley window]\label{rem:cnempty}
In all our experiments the window of \cref{thm:cayley}(c) is empty: for the EM directional operators, at the step where the off-diagonal entries of the resolvent first turn nonnegative, the resolvent diagonal has already descended below $1/2$, so the lower and upper failure mechanisms overlap on every mesh tested (\cref{sec:numerics}). We have no proof that the window is empty for every generator in the class of \cref{sec:resolvent}. The emptiness is an empirical finding, stated here as a conjecture supported by the thresholds of \cref{ouThresh} and the sweeps of \cref{sec:numerics}. The Crank--Nicolson realization of the directional factors therefore carries no per-factor positivity guarantee. It remains positive in practice on resolved meshes, and we report it in that capacity, but the guaranteed second-order factor of this paper is the Pad\'e$(0,2)$ map of \cref{ssec:pade}.
\end{myremark}

\begin{myremark}[The exponential and the sign of $r(\infty)$]\label{rem:whyexp}
For the exponential, positivity above the threshold $\tau_0$ persists for all larger steps, because $e^{tL}\to\Pi>0$; in the language of \cref{thm:limit} the exponential behaves as the case $r(\infty)=0$. The price paid by Crank--Nicolson is therefore not the price of second order as such but of its negative $r(\infty)$: the Pad\'e$(0,2)$ map below is second-order and recovers the one-sided window. The backward Euler factor, with $r(\infty)=0$, retains the one-sided window of \cref{thm:resolvent} at the cost of first-order accuracy, mirroring the fallback role it plays in \cite{ItkinDF2026}.
\end{myremark}

\subsection{The Pad\'e$(0,2)$ factor} \label{ssec:pade}

The subdiagonal Pad\'e$(0,2)$ approximant
\begin{equation}\label{eq:pade02}
r_{02}(z) \;=\; \frac{1}{1-z+z^2/2}
\end{equation}
satisfies $r_{02}(z)=e^z+O(z^3)$, so it is second-order in time, and it is L-stable: $|r_{02}(\mathrm{i}y)|^2=1/(1+y^4/4)\le1$ on the imaginary axis, its poles $1\pm\mathrm{i}$ lie in the right half-plane, and $r_{02}(\infty)=0$.

\begin{theorem}[Pad\'e window]\label{thm:pade}
Let $L$ be as above. Then there exists $\gamma_r\in[0,\infty)$ such that
\begin{equation}\label{eq:padewindow}
r_{02}(\gamma L) \;=\; \Bigl(\mathcal I-\gamma L+\tfrac{\gamma^2}{2}L^2\Bigr)^{-1} \;>\; 0
\qquad \text{for all } \gamma\ge\gamma_r ,
\end{equation}
and, if $L$ has a negative off-diagonal entry, $r_{02}(\gamma L)$ has a negative entry for all sufficiently small $\gamma>0$, so $\gamma_r>0$ in the EM case proper.
\end{theorem}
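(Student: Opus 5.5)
The statement splits into two parts, and both are special cases of \cref{cor:rinf}. The plan is to check that $r_{02}$ satisfies the hypotheses of \cref{thm:limit} and then read off parts (a) and (c).

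First, the hypotheses. $r_{02}$ is rational, $r_{02}(0)=1$ and $r_{02}(\infty)=0$. Its poles are the roots of $1-z+z^2/2$, namely $1\pm\mathrm{i}$, which lie in the open right half-plane, so $r_{02}$ is A-acceptable.

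The poles must also avoid $\operatorname{spec}(\gamma L)$ for every $\gamma>0$. Since $1\pm\mathrm i$ is non-real, we need $\gamma\lambda\ne1\pm\mathrm i$ for every eigenvalue $\lambda$ of $L$. Every such $\lambda$ has $\operatorname{Re}\lambda\le0$, because the spectral abscissa is $0$, whereas $\operatorname{Re}(1\pm\mathrm i)=1>0$. So no collision is possible, and the matrix $\mathcal I-\gamma L+\tfrac{\gamma^2}{2}L^2=(\mathcal I-(1+\mathrm i)\gamma L)(\mathcal I-(1-\mathrm i)\gamma L)/\ldots$ is invertible; the factorization is used only for invertibility. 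This also confirms the matrix identity in \eqref{eq:padewindow}.

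Second, the one-sided window. With $r_{02}(\infty)=0\in[0,1)$, \cref{thm:limit} gives $r_{02}(\gamma L)\to\Pi$. Entrywise $\Pi_{ij}=p^\infty_i>0$, so $\Pi$ is strictly positive. \Cref{cor:rinf}(a) then yields $\gamma_r<\infty$ with $r_{02}(\gamma L)>0$ for all $\gamma\ge\gamma_r$. If one prefers to argue directly, continuity of the limit together with the finite number of entries and a uniform margin $\min_i p^\infty_i$ gives the same conclusion. Taking $\gamma_r$ to be the infimum of such thresholds, clipped at $0$, places it in $[0,\infty)$.

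Third, failure for small $\gamma$. Here $r_{02}'(0)=1$, since $r_{02}(z)=1+z+z^2/2+O(z^3)$, so \cref{cor:rinf}(c) applies. Concretely, $r_{02}(\gamma L)=\mathcal I+\gamma L+O(\gamma^2)$. If $L_{ij}<0$ with $i\ne j$, the $(i,j)$ entry equals $\gamma L_{ij}+O(\gamma^2)$, which is negative for small $\gamma>0$. This shows positivity fails near $0$.

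To conclude $\gamma_r>0$, note that if $\gamma_r=0$ then positivity would hold for every $\gamma>0$, contradicting the small-$\gamma$ failure. The only real obstacle is the pole-avoidance check, and the spectral abscissa argument above settles it. Everything else is inherited from \cref{thm:limit} and \cref{cor:rinf}.
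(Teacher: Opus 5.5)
Your proof is correct and follows the paper's route: the paper proves the window by \cref{cor:rinf}(a) with $r_{02}(\infty)=0$ (limit $\Pi>0$) and the small-$\gamma$ failure by \cref{cor:rinf}(c) with $r_{02}'(0)=1$. You additionally verify the hypotheses of \cref{thm:limit}, which the paper leaves implicit. One small slip is that your written factorization is off, since $(\mathcal I-(1+\mathrm i)\gamma L)(\mathcal I-(1-\mathrm i)\gamma L)=\mathcal I-2\gamma L+2\gamma^2L^2$, whereas the correct one is $\tfrac12\bigl((1+\mathrm i)\mathcal I-\gamma L\bigr)\bigl((1-\mathrm i)\mathcal I-\gamma L\bigr)$; nothing depends on it, because your eigenvalue argument ($1-\gamma\lambda+\gamma^2\lambda^2/2\neq0$ since $\operatorname{Re}(\gamma\lambda)\le0<1$) already gives invertibility.
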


\begin{proof}
See \cref{app:pade}.
\end{proof}

\begin{myremark}[The Metzler case]\label{rem:pademetzler}
Unlike the resolvent, the Pad\'e$(0,2)$ map need not be unconditionally positive when $L$ is Metzler. For the resolvent, \cref{thm:resolvent} gives $\gamma_0=0$ in the M-matrix variant, because $\mathcal I-\gamma L$ is itself an M-matrix and its inverse is entrywise nonnegative for every $\gamma>0$. The second-order map loses this structure. For a tridiagonal Metzler $L$ the distance-two entries of the matrix in \eqref{eq:padewindow} are
\begin{equation}\label{eq:dist2}
\Bigl(\mathcal I-\gamma L+\tfrac{\gamma^2}{2}L^2\Bigr)_{i,i\pm2}
=\tfrac{\gamma^2}{2}\bigl(L^2\bigr)_{i,i\pm2}
=\tfrac{\gamma^2}{2}\,L_{i,i\pm1}L_{i\pm1,i\pm2}\;\ge\;0 ,
\end{equation}
a product of two nonnegative off-diagonals, strictly positive wherever consecutive couplings do not vanish. The matrix therefore has positive off-diagonal entries, is not an M-matrix, and its inverse is no longer sign-definite, so the argument that forces $\gamma_0=0$ does not carry over. Hence $\gamma_r$ may be strictly positive even for Metzler $L$, and the conclusion $\gamma_r>0$ is not confined to the EM case proper of \cref{thm:pade}. The pure-diffusion stiff test of \cref{stiffRing} shows this directly: the operator is Metzler, yet the second-order factor undershoots mildly at a stiff step, while the backward Euler factor, first order and true  M-matrix based, stays nonnegative. This is the discrete face of the Bolley–Crouzeix barrier \cite{BolleyCrouzeix1978}: no rational approximation of the exponential of order two or higher preserves positivity unconditionally on every M-matrix.
\end{myremark}

\begin{myremark}[Implementation]\label{rem:padeimpl}
The factor is applied by solving the single linear system $(\mathcal I-\gamma L+\tfrac{\gamma^2}{2}L^2)\,\bm p=\bm b$, banded with twice the bandwidth of $L$, or, via partial fractions over the conjugate pole pair $1\pm\mathrm{i}$ of $1-z+z^2/2$, as
\begin{equation*}
r_{02}(\gamma L)\,\bm b \;=\; 2\,\Ree\Bigl[\,\mathrm{i}\,\bigl((1+\mathrm{i})\mathcal I-\gamma L\bigr)^{-1}\bm b\Bigr],
\end{equation*}
a single complex banded solve with the same bandwidth as $L$. Either route is $O(n)$ per line. Mass is conserved exactly under the conservative closure since $r_{02}(0)=1$ (the mass corollary of \cref{sec:spatial}), and L-stability removes the temporal ringing of Crank--Nicolson in stiff and advection-dominated regimes.
\end{myremark}

\subsection{Quantitative thresholds} \label{ssec:quantitative}

The thresholds $\gamma_0$, $\gamma_r$, $\gamma_-$ and $\gamma_+$ are global
properties of the full matrix $L$ (boundary rows included).  Once $L$ is
assembled they can be located by bisection on the smallest entry of the corresponding matrix function, as was done in \cref{rem:certification}. Each bisection step evaluates one entrywise minimum of a matrix function of $L$, at the cost of a single factorized solve or matrix action, so locating a threshold to working precision takes a few tens of such evaluations and is negligible beside the $T/\Delta t$ steps of a full integration.  It is
useful to have \emph{a priori} upper bounds that can be evaluated from spectral
data of $L$ alone.  Such bounds follow from the decompositions used in the
proofs of \cref{thm:resolvent,thm:limit}.

\myparagraph{Resolvent bound.}
Write $(s\mathcal I-L)^{-1}=s^{-1}\Pi+R(s)$ with $s=1/\gamma$ and
$R(s)=(s\mathcal I-L)^{-1}(\mathcal I-\Pi)$ the reduced resolvent on
$\operatorname{ran}L$.  Let $\delta>0$ be the spectral gap and let
\begin{equation}\label{eq:Cs}
C_\star=\sup_{s\in(0,\delta/2]}\|R(s)\|_{\max}.
\end{equation}
Because $R(s)$ is analytic on $[0,\delta/2]$, $C_\star$ is finite.  From
$[(s\mathcal I-L)^{-1}]_{ij}=s^{-1}p^\infty_i+R_{ij}(s)$ we obtain
\begin{equation}\label{eq:resolvent-bound}
\gamma_0^{\mathrm{bd}}:=\max\Bigl\{\,\frac{2}{\delta},\ \frac{C_\star}{\min_kp^\infty_k}\,\Bigr\}\;\ge\;\gamma_0 .
\end{equation}
The estimate is conservative: it uses a single global constant $C_\star$ and
the smallest stationary mass.  Tighter row-specific bounds are obtained by
replacing $C_\star$ with $-\min_j R_{ij}(s)$ for each row $i$.

\myparagraph{Pad\'e$(0,2)$ bound.}
The decomposition $r_{02}(\gamma L)=\Pi+r_{02}(\gamma L)(\mathcal I-\Pi)$ gives,
on $\operatorname{ran}L$,
\begin{equation}
\|r_{02}(\gamma L)(\mathcal I-\Pi)\|_{\max}\le
\frac{2K}{(\delta\gamma)^2}\quad\text{for }\gamma\ge\gamma^\ddagger,
\end{equation}
where $K$ is a condition number of the eigenvector matrix of
$L|_{\operatorname{ran}L}$ and $\gamma^\ddagger$ is any value with
$\delta \gamma^\ddagger>\sqrt{2}$ (the modulus of the poles $1\pm\mathrm i$).
Hence
\begin{equation}\label{eq:pade-bound}
\gamma_r^{\mathrm{bd}}:=\max\Bigl\{\gamma^\ddagger,\;
\frac{1}{\delta}\sqrt{\frac{2K}{\min_kp^\infty_k}}\Bigr\}\;\ge\;\gamma_r .
\end{equation}

Because the error on $\operatorname{ran}L$ decays as $\gamma^{-2}$, the Pad\'e
threshold is determined by the same spectral data as the resolvent but with a
quadratic dependence on the gap.

\myparagraph{Ornstein--Uhlenbeck benchmark.}
For the advection-dominated OU operator of \cref{ssec:ou} ($\kappa=6$,
$\sigma=0.5$, conservative closure), the quantities $C_\star$, $\delta$, and $K$
are evaluated directly from the assembled matrices. The bounds
\cref{eq:resolvent-bound,eq:pade-bound} overestimate the exact thresholds by
factors that remain stable under refinement: $\gamma_0^{\mathrm{bd}}/\gamma_0$ is
$O(10^2)$ and grows slowly with $n$, whereas $\gamma_r^{\mathrm{bd}}/\gamma_r$
remains between $2.8$ and $2.9$ on every mesh. The latter ratio is
mesh-independent because, in the large--$\gamma$ limit, both the exponential and
the Pad'e map approach the same projector $\Pi$; hence, their thresholds inherit
the same spectral length scale. By contrast, $\gamma_0$ tends to zero under
refinement, while $\gamma_r$ and $\tau_0$ remain $O(1)$.

This contrast should not be interpreted as a defect of the backward Euler factor;
it is important to be explicit about the direction of the inequality. A smaller
$\gamma_0$ \emph{widens} the one-sided window $[2\gamma_0,\infty)$ and therefore
makes the directional condition easier, not harder, to satisfy. The vanishing
threshold reflects the order of the map, rather than the quality of its
positivity window. The resolvent is first-order, and its threshold is governed by
the $O(\gamma)$ term in the Neumann expansion, which vanishes with the mesh. By
contrast, the second-order maps inherit the spectral length scale of the
projector $\Pi$ and therefore retain an $O(1)$ threshold. The price of the wider
window is thus paid in accuracy, not in positivity.

For the composite step, the wider window is not merely formal. The upper endpoint
$\Theta$ in \cref{prop:posadi} is a resolution condition on the cross stencil and
\emph{increases} under refinement, whereas $2\gamma_0$ decreases like $h$. Thus,
the two endpoints move toward one another from opposite sides and eventually
cross: on the meshes of \cref{crossWindow}, the interval $[2\gamma_0,\Theta]$ is
empty on the coarse meshes but nonempty on the fine ones. The backward Euler
variant is therefore the one for which the composite positivity guarantee is
actually available, at the expected cost of first-order accuracy.

The admissible step region of the exponential DF scheme is $[2\tau_0,\Theta]$;
that of the DF-ADI flagship is $[2\gamma_r,\Theta]$.  Because $\gamma_r$ and
$\tau_0$ are of the same order (see \cref{ouThresh}), the two regions have the
same shape and differ only by a constant factor in the lower endpoint.  The backward-Euler fallback $[2\gamma_0,\infty)$ is drawn in \cref{Fwindows}. Its lower endpoint $2\gamma_0$ moves toward zero under refinement, so its guaranteed window widens on fine meshes rather than shrinking. This is the low-order behavior of the resolvent (\cref{thm:resolvent}), in contrast to the mesh-stable $\gamma_r$ of the flagship.

\Cref{tab:maps} collects the four directional maps and their positivity windows.
\begin{table}[!htb]
\centering
\scalebox{0.9}{
\begin{tabular}{|l|c|c|l|l|}
\toprule
\rowcolor[rgb]{ .792,  .929,  .984}
\textbf{Method} & \textbf{Order} & $\bm{r(\infty)}$ & \textbf{Positivity window} & \textbf{Cost per direction} \\
\hline
Backward Euler & $1$ & $0$ & $[\gamma_0,\infty)$, $\gamma_0\!\to\!0$ & one banded solve \\
\hline
Crank--Nicolson & $2$ & $-1$ & bounded, empirically empty & one banded solve \\
\hline
\rowcolor[rgb]{ .557,  .851,  .451}
Pad\'e$(0,2)$ & $2$ & $0$ & $[\gamma_r,\infty)$, $\gamma_r=O(1)$ & one complex banded solve \\
\hline
Exponential & $\infty$ & $0$ & $[\tau_0,\infty)$ & Krylov, $O(m^2N)$ \\
\bottomrule
\end{tabular}}
\caption{The directional maps of \cref{sec:resolvent} at a glance, on the substep of length $\gamma=\Delta t/2$; orders are temporal. The windows are those of \cref{thm:resolvent} (backward Euler), \cref{thm:cayley} (Crank--Nicolson, see \cref{rem:cnempty}) and \cref{thm:pade} (Pad\'e), and $\tau_0$ is the eventual-positivity threshold of the exponential. Only the maps with $r(\infty)=0$ carry a one-sided window, and the flagship Pad\'e$(0,2)$ is the second-order member of that set.}
\label{tab:maps}%
\end{table}%

\subsection{Non-uniform meshes} \label{ssec:nonuniform}

Everything above is stated for a fixed generator and does not require a uniform
mesh. The conservative closure, however, requires some care when the spacing
varies, and it is useful to distinguish what changes from what does not.

Let $x_1<\dots<x_n$ be arbitrary, place the faces at the midpoints
$x_{i+1/2}=(x_i+x_{i+1})/2$, and let $\Delta_i$ denote the dual-cell widths,
$\Delta_i=(x_{i+1}-x_{i-1})/2$ in the interior. Writing the update in flux form,
$\dot{p}*i=-(J*{i+1/2}-J_{i-1/2})/\Delta_i$, with zero flux through the two
boundary faces, gives by telescoping
\begin{equation}\label{eq:nuconserve}
\sum_i \Delta_i,\dot p_i
= -\bigl(J_{n+1/2}-J_{1/2}\bigr)=0,
\qquad\text{that is}\qquad
\bm\Delta^\top L=0,
\end{equation}
where $\bm\Delta=(\Delta_1,\dots,\Delta_n)^\top$. Thus, the conserved functional
is the cell mass $\sum_i\Delta_i p_i$, and the column-sum identity is
\emph{weighted}. In general, it is not $\bm1^\top L=0$, which is the hypothesis
used in \cref{sec:resolvent}. On a uniform mesh, $\bm\Delta$ is constant and the
two statements coincide, explaining why this distinction does not arise elsewhere
in the paper.

The theory nevertheless transfers directly. Introduce the cell-mass variable $\bm
m=D_\Delta\bm p$, where $D_\Delta=\operatorname{diag}(\bm\Delta)$. Then $\dot{\bm
m}=\tilde L\bm m$ with $\tilde L=D_\Delta L D_\Delta^{-1}$, and
\begin{equation}
\bm1^\top\tilde L = (\bm \Delta^\top L) D_\Delta^{-1} = 0
\end{equation}
by \eqref{eq:nuconserve}. Since $D_\Delta$ is a \emph{positive} diagonal matrix,
this similarity transformation preserves the entrywise sign pattern and hence the
M-matrix/EM structure of the generator. Moreover, for any rational function $r$,
\begin{equation}\label{eq:nusimilar}
r(\gamma\tilde L) = D_\Delta,r(\gamma L),D_\Delta^{-1},
\end{equation}
so $r(\gamma\tilde L)$ is entrywise nonnegative precisely when $r(\gamma L)$ is.
The thresholds $\tau_0$, $\gamma_0$, and $\gamma_r$ are therefore invariant under
this change of variables. Consequently, all results of \cref{sec:resolvent},
including \cref{thm:limit}, \cref{cor:rinf}, and \cref{thm:pade}, carry over
verbatim to a non-uniform mesh when expressed in the cell-mass variable.

What does change with the mesh is the size of the thresholds, and the three maps
behave quite differently. \Cref{nuThresh} sweeps a $\sinh$-stretched grid at
fixed $n$, clustering nodes near the centre of the well, and reports the
thresholds together with the weighted column sum and the locations of the
negative off-diagonal entries. Two features stand out. First, the EM structure is
preserved: the negative off-diagonal entries remain on the bands $j-i=\pm2$ for
every stretching, so the operator remains EM rather than becoming Metzler, while
\eqref{eq:nuconserve} holds to round-off throughout. Second, the thresholds
separate markedly. Over a stretching range of $h_{\max}/h_{\min}=8.5$, the
eventual-positivity threshold $\tau_0$ of the exponential increases by a factor
of $2.4$ and the backward Euler threshold $\gamma_0$ by a factor of $4.2$,
whereas $\gamma_r$ changes by only $0.6\%$. Thus, the Pad'e$(0,2)$ threshold is
insensitive not only to refinement, as shown in \cref{ouThresh}, but also to mesh
grading—a useful property for a threshold that must be certified once for a given
problem.
\begin{table}[htbp]
\centering
\scalebox{0.85}{
\begin{tabular}{|l|c|c|r|r|r|c|c|}
\toprule
\rowcolor[rgb]{ .792,  .929,  .984}
\textbf{grid} & $h_{\max}/h_{\min}$ & $h_{\min}$ &
\multicolumn{1}{c|}{$\tau_0$} & \multicolumn{1}{c|}{$\gamma_0$} &
\multicolumn{1}{c|}{$\gamma_r$} &
$\max|\bm\Delta^\top L|$ & \textbf{negative bands} \\
\hline
uniform     & 1.0 & 0.07500 & 4.371e-01 & 8.093e-03 & 1.570e+00 & 7e-15 & $\pm2$ \\
\hline
$c=4.0$     & 1.8 & 0.05974 & 5.513e-01 & 1.334e-02 & 1.574e+00 & 7e-15 & $\pm2$ \\
\hline
$c=2.0$     & 3.1 & 0.04547 & 7.248e-01 & 2.005e-02 & 1.577e+00 & 4e-15 & $\pm2$ \\
\hline
$c=1.5$     & 4.1 & 0.03928 & 8.104e-01 & 2.361e-02 & 1.577e+00 & 7e-15 & $\pm2$ \\
\hline
$c=1.0$     & 6.0 & 0.03115 & 9.305e-01 & 2.911e-02 & 1.579e+00 & 1e-14 & $\pm2$ \\
\hline
\rowcolor[rgb]{ .557,  .851,  .451}
$c=0.7$     & 8.5 & 0.02490 & 1.030e+00 & 3.423e-02 & 1.580e+00 & 1e-14 & $\pm2$ \\
\bottomrule
\end{tabular}}
\caption{Positivity thresholds on a $\sinh$-stretched mesh, $x=c\sinh\xi$ with
$\xi$ uniform, for the Ornstein--Uhlenbeck operator \eqref{eq:ou} with
$\kappa=\sigma=1$ on $[-6,6]$ at fixed $n=161$; smaller $c$ gives stronger
grading. The weighted column sum \eqref{eq:nuconserve} holds to round-off at
every stretching, and the negative off-diagonal entries remain confined to the
bands $\pm2$, so the operator remains EM. Across the sweep, $\tau_0$ increases
by a factor of $2.4$ and $\gamma_0$ by $4.2$, whereas $\gamma_r$ changes by
only $0.6\%$.}
\label{nuThresh}%
\end{table}%

\section{The DF-ADI scheme for the 2D Fokker--Planck equation} \label{sec:dfadi}

With the theoretical foundation for the rational maps established, we now construct the full DF-ADI scheme. This composite numerical step replaces the computationally expensive matrix exponentials with efficiently solvable Pad\'e$(0,2)$ rational factors. The resulting architecture achieves second-order accuracy in space and time at an $O(N)$ computational cost per step, ensures exact discrete mass conservation under the conservative closure, and features a fully characterized conditional positivity regime.

\myparagraph{Directional factors and commutation.}
Let $\Delta t$ denote the uniform temporal step size. Following the methodology of \cite{ItkinDF2026}, we freeze the spatial operators' coefficients at the temporal midpoint $t_n + \Delta t/2$. The integration over a directional half-step of length $s$, where $s=\Delta t/2$ is the substep denoted $\gamma$ in \cref{sec:resolvent}, is realized by applying the subdiagonal Pad\'e$(0,2)$ approximant, resulting in the factors:
\begin{equation}\label{eq:dirfactor}
\Phi_\alpha(s) \;=\; r_{02}(sA_\alpha) \;=\; \Bigl(\mathcal I-sA_\alpha+\tfrac{s^2}{2}A_\alpha^2\Bigr)^{-1},
\qquad \alpha\in\{x,y\}.
\end{equation}
Operationally, each factor requires $N_y$ (respectively, $N_x$) independent banded linear solves of dimension $N_x$ (respectively, $N_y$) at a total cost of strictly $O(N_x N_y)$, implemented per \cref{rem:padeimpl}. The Crank--Nicolson realization, $\bigl(\mathcal I-\tfrac{s}{2}A_\alpha\bigr)^{-1}\bigl(\mathcal I+\tfrac{s}{2}A_\alpha\bigr)$, is retained as a comparative baseline, as it lacks a per-factor positivity guarantee (\cref{rem:cnempty}).

The consecutive application $\Phi_x\Phi_y$ constitutes an alternating-direction sweep for the uncoupled directional generator $A_x+A_y$, structurally similar to the Peaceman--Rachford method but utilizing fully implicit factors. Under conditions where the coefficients are strictly separable, the Kronecker lifts commute exactly, yielding:
\begin{equation}\label{eq:commute}
\Phi_x(s)\,\Phi_y(s) \;=\; e^{s(A_x+A_y)} + O(s^3).
\end{equation}
This factorization introduces no structural splitting error between the directions beyond the local truncation error of the rational approximation itself. In the non-separable case, precise commutation fails, but the subsequent symmetric composite step restores overall second-order accuracy through the standard Strang splitting argument.

\myparagraph{The composite step.}
The scheme advances the solution by
\begin{equation}\label{eq:dfadi}
\bm p^{\,n+1}
= \Phi_x\!\Bigl(\tfrac{\Delta t}{2}\Bigr)\,
  \Phi_y\!\Bigl(\tfrac{\Delta t}{2}\Bigr)\,
  \Phi_{xy}(\Delta t)\,
  \Phi_y\!\Bigl(\tfrac{\Delta t}{2}\Bigr)\,
  \Phi_x\!\Bigl(\tfrac{\Delta t}{2}\Bigr)\,
  \bm p^{\,n},
\end{equation}
with the central factor \eqref{eq:central} applied by the factorized Picard solver. The structure is that of the exponential DF step with every Krylov exponential replaced by a banded rational solve; with the Pad\'e$(0,2)$ factors the directional stages contain no explicit half at all, and no stage of the scheme touches the cross stencil explicitly except the explicit half of the central factor, exactly as in \cite{ItkinDF2026}.

\Cref{Fstrang} shows the sequence of substeps. The operator product in \eqref{eq:dfadi} is read
from right to left, as usual, while the data flow in the diagram runs from left to right; the
composition is symmetric, so the two orderings coincide.

\begin{figure}[!htb]
\centering
\begin{tikzpicture}[
  >=stealth,
  box/.style={draw, rounded corners=2pt, minimum height=10mm, minimum width=21mm,
              align=center, font=\small},
  dirn/.style={box, fill=blue!8},
  mixn/.style={box, fill=green!16},
  cost/.style={font=\scriptsize, align=center, text width=25mm}
]
\node (pn) {$\bm p^{\,n}$};
\node[dirn, right=5mm of pn]  (x1) {$\Phi_x(\tfrac{\Delta t}{2})$};
\node[dirn, right=3mm of x1]  (y1) {$\Phi_y(\tfrac{\Delta t}{2})$};
\node[mixn, right=3mm of y1]  (xy) {$\Phi_{xy}(\Delta t)$};
\node[dirn, right=3mm of xy]  (y2) {$\Phi_y(\tfrac{\Delta t}{2})$};
\node[dirn, right=3mm of y2]  (x2) {$\Phi_x(\tfrac{\Delta t}{2})$};
\node[right=5mm of x2] (pn1) {$\bm p^{\,n+1}$};

\draw[->] (pn) -- (x1);  \draw[->] (x1) -- (y1);  \draw[->] (y1) -- (xy);
\draw[->] (xy) -- (y2);  \draw[->] (y2) -- (x2);  \draw[->] (x2) -- (pn1);

\node[cost, below=2.5mm of x1] {$N_y$ banded solves\\ of size $N_x$};
\node[cost, below=2.5mm of y1] {$N_x$ banded solves\\ of size $N_y$};
\node[cost, below=2.5mm of xy] (xyc) {factorized Picard,\\ $O(N_xN_y)$ per sweep};
\node[cost, below=2.5mm of y2] {$N_x$ banded solves\\ of size $N_y$};
\node[cost, below=2.5mm of x2] {$N_y$ banded solves\\ of size $N_x$};

\draw[dashed, gray!70] ($(xy.north)+(0,3mm)$) -- (xy.north);
\draw[dashed, gray!70] (xyc.south) -- ($(xyc.south)+(0,-3mm)$);
\node[font=\scriptsize, gray!70, above=3.5mm of xy] {mirror};
\end{tikzpicture}
\caption{One step of the DF-ADI scheme \eqref{eq:dfadi}. The two shaded families are the
directional factors, each a set of independent one-dimensional banded solves, and the implicit
mixed factor, applied by the factorized Picard iteration. The half-steps are mirrored about the
central factor, which is what makes the composition symmetric and therefore second order in
time. No stage touches the cross stencil explicitly except the explicit half of
\eqref{eq:central}, and with the Pad\'e$(0,2)$ factors the directional stages have no explicit
half at all.}
\label{Fstrang}
\end{figure}

\begin{proposition}[Order]\label{prop:order}
Each factor of \eqref{eq:dfadi} approximates the corresponding exponential to $O(\Delta t^3)$, hence the composite step reproduces the symmetric Strang product of \cite{ItkinDF2026} to $O(\Delta t^3)$ per step, and the scheme is second-order in time. With the Scheme B coupling in the central factor the spatial defect is $O(\max(h_x^2,h_y^2))$, and the assembled scheme converges at second order under the joint refinement $\Delta t\sim h$.
\end{proposition}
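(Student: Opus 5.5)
The plan is a local truncation comparison of the product \eqref{eq:dfadi} with the reference symmetric Strang product
\[
S(\Delta t)=e^{\frac{\Delta t}{2}A_x}e^{\frac{\Delta t}{2}A_y}e^{\Delta t A_{xy}}e^{\frac{\Delta t}{2}A_y}e^{\frac{\Delta t}{2}A_x}
\]
of \cite{ItkinDF2026}, followed by the standard Lax-type argument (consistency plus stability gives convergence). Second order of $S$ itself, and its spatial defect under the Scheme B coupling, are imported from the companion paper. What remains is the following:
\begin{itemize}
\item control the per-factor defects;
\item propagate them through the five-fold product;
\item pass from local to global error.
\end{itemize}

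\textbf{Per-factor defects.} Since $r_{02}(z)=e^z+O(z^3)$, we have $\Phi_\alpha(s)-e^{sA_\alpha}=O(s^3\|A_\alpha\|^3)$ for small $s\|A_\alpha\|$. The trapezoidal factor \eqref{eq:central} satisfies $\Phi_{xy}(\Delta t)=e^{\Delta t A_{xy}}+O(\Delta t^3)$ by the same Taylor argument. Because every factor is evaluated at the frozen midpoint coefficients, the comparison is between matrix functions of fixed matrices, and no time-dependence terms appear.

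\textbf{Product comparison.} I would use the telescoping identity
\[
\prod_k\Phi_k-\prod_k E_k=\sum_k\Bigl(\prod_{j<k}\Phi_j\Bigr)(\Phi_k-E_k)\Bigl(\prod_{j>k}E_j\Bigr),
\]
with $E_k$ the corresponding exponentials. Each term is $O(\Delta t^3)$ times norms of the surrounding factors. Those norms are bounded uniformly in $\Delta t$: the exponentials are bounded by $e^{t\omega}$, $r_{02}$ is A-stable, and the Cayley factor of $A_{xy}$ is bounded because $A_{xy}$ has real nonpositive spectrum. This yields $\bm p^{n+1}=S(\Delta t)\bm p^n+O(\Delta t^3)$. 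Combined with the second-order Strang defect $S(\Delta t)-e^{\Delta t(A_x+A_y+A_{xy})}=O(\Delta t^3)$ from \cite{ItkinDF2026}, the local error is $O(\Delta t^3)$.

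\textbf{Global error.} Summing $T/\Delta t$ local errors gives $O(\Delta t^2)$, provided the scheme is stable in some norm. I would take stability from the $L_1$ norm. Mass conservation holds for each factor (mass corollary in \cref{sec:spatial}), and inside the positivity window each factor is entrywise nonnegative with unit column sums, hence an $L_1$ contraction. Outside the window, I would fall back on uniform power-boundedness of each factor, again inherited from A-stability. The spatial defect $O(\max(h_x^2,h_y^2))$ of the semi-discrete operator with Scheme B is imported, and adding it under $\Delta t\sim h$ gives second-order convergence.

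\textbf{Main obstacle.} The hard step is making the $O(\Delta t^3)$ constants uniform in the mesh. The Taylor remainders involve $\|A_\alpha\|^3\sim h^{-6}$, so the naive bound fails under $\Delta t\sim h$. I would try first to measure the local error on smooth data, using $A_\alpha^3\bm p=O(1)$ for resolved densities. Alternatively, I would use the rational-approximation error bound $\|r(sA)-e^{sA}\|$ applied to smooth vectors, in the spirit of Brenner--Thomée. Where this cannot be closed, the statement should be read as asserting order at fixed mesh.
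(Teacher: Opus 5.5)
Your outline follows the paper's proof: Taylor expansion of $r_{02}$ and of the trapezoidal factor, a fixed-length product comparison with $S(\Delta t)$ (your telescoping identity spells out the single sentence the paper gives), the Strang defect and the Scheme~B spatial defect imported from \cite{ItkinDF2026} (the latter under the log-Lipschitz hypothesis of its Proposition~10), and local-to-global summation. The paper does not prove the stability that last step needs; it writes the final bound ``assuming the resulting scheme is stable''. Where you try to supply stability, the argument fails.

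At fixed mesh stability is automatic. Each factor is $\mathcal I+O(\Delta t)$, so $n\Delta t\le T$ steps are bounded by $e^{C_hT}$, but $C_h\sim\|A\|\sim h^{-2}$. The joint-refinement claim needs a bound uniform in $h$, and neither of your sources gives one.
\begin{itemize}
\item The $L_1$ contraction requires $\Delta t\ge2\gamma_r$. By \cref{thm:pade} the Pad\'e window is bounded away from zero in the EM case, so it excludes every step on a path $\Delta t\to0$. For the Pad\'e factors the composite region is in fact empty at $\rho=0.95$ (\cref{crossWindow}).
\item A-stability says only that $|r_{02}(z)|\le1$ for scalar $z$ with $\Ree z\le0$. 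It gives $\|r_{02}(sA)\|\le1$ when $A$ is normal, or dissipative in some inner-product norm (von Neumann's inequality). Nothing in your proposal or the paper exhibits an inner product in which $A_x$, $A_y$ and $A_{xy}$ are simultaneously dissipative. For a non-normal matrix such as $A_{xy}$, which the paper calls strongly non-normal, a real nonpositive spectrum bounds neither norms nor powers uniformly in the mesh. (For the five-factor telescoping at fixed mesh and small $\Delta t$ the bounds are trivial, which is all the paper uses there.)
\item Power-boundedness of each factor separately does not give power-boundedness of a product of non-commuting factors.
\end{itemize}
What would suffice is a single norm in which every factor satisfies $\|\Phi\|\le1+C\Delta t$ with $C$ independent of $h$. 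Either exhibit such a norm or, as the paper does, state stability as a hypothesis.

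On consistency you are right that $\Delta t^3\|A_\alpha\|^3$ is not small under $\Delta t\sim h$. The paper's proof does not address this either: it adds $\Delta t^2$ and $\max(h_x^2,h_y^2)$ with a constant treated as mesh-independent. So this gap is shared with the paper, not introduced by you. The smooth-data route is the natural repair, but it moves the difficulty rather than removing it. Writing $r_{02}(z)-e^z=z^3g(z)$, you still need $\|g(sA_\alpha)\|$, and the norms of the neighbouring factors in the telescoping sum, bounded uniformly in $h$; that is a stability estimate of the same kind.

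Your fallback, reading the proposition as fixed-mesh order, is sound only for the exact inverse in \eqref{eq:central}. For the step as actually run, with the factorized Picard solve, \cref{tempOrder} shows the error growing as $\Delta t\to0$ at fixed mesh. The iteration converges to a system that differs by $O(h^3)$ independently of $\Delta t$, and this accumulates to $O(h^3/\Delta t)$. The paper therefore keeps the joint-refinement reading, under which this term is $O(h^2)$. That is the statement to retain, with the Picard defect added to the error budget.
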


\begin{proof}
See \cref{app:order}.
\end{proof}

\subsection{Relation to classical ADI schemes} \label{ssec:relation}

The scheme \eqref{eq:dfadi} is an alternating direction implicit method in the sense of Peaceman--Rachford. Each direction is advanced by an implicit one-dimensional solve with banded matrices, and the directions alternate within the step. It is not a member of the stabilizing-correction family of Douglas, Craig--Sneyd, MCS and Hundsdorfer--Verwer, and since those schemes are the standard reference point in the pricing literature the difference is worth stating precisely.

The stage structures of that family exist to repair two approximations. The first is the approximate factorization of noncommuting directional blocks. The second is the explicit treatment of the mixed derivative, which is forced by the absence of a cheap solver for the cross block. Neither repair is needed here. In the separable case the Kronecker lifts commute exactly, so by \eqref{eq:commute} the directional factorization carries no splitting error, and in the non-separable case the symmetric arrangement of \eqref{eq:dfadi} restores second order without a correction stage. The mixed block is solved implicitly at $O(N)$ cost by the factorized Picard iteration, so it never enters a stage explicitly except through the trapezoidal half of \eqref{eq:central}. The stability theory of the explicit-mixed alternatives is developed in \cite{HoutWelfert2007,HoutFoulon2010}, and we use those schemes as comparators in \cref{sec:numerics}.

The same solver would allow the mixed block to be made implicit inside a stabilizing-correction scheme, by promoting $A_{xy}$ from the explicitly treated term $F_0$ to a third implicit direction $F_3$ and setting $F_0\equiv0$. Two properties survive the promotion. The order behavior is favourable: the MCS and HV schemes are second order for any $\theta$ and their order proofs never use $F_0\neq0$, while the Douglas scheme, which is only first order with an explicit mixed term, becomes second order at $\theta=1/2$. In the scalar test with $z_j=\Delta t\,\lambda_j$ and $z=\sum_j z_j$, induction over the stages gives $Y_j=1+z+\theta z(z_1+\dots+z_j)+O(z^3)$, hence a stability function $1+z+\theta z^2+O(z^3)$, which matches $e^z$ to second order precisely when $\theta=1/2$. Exact mass conservation survives as well, since $\bm1^\top F_j=0$ for every block implies $\bm1^\top(\mathcal I-\theta\Delta t\,F_j)^{-1}=\bm1^\top$ and every stage line is mass-neutral.

Positivity does not survive, and the obstruction is structural rather than a matter of how the mixed derivative is treated. A stage of a stabilizing-correction scheme has the form
\begin{equation}\label{eq:sccstage}
Y_j = \bigl(\mathcal I-\theta\Delta t\,F_j\bigr)^{-1}\bigl[\,Y_{j-1}-\theta\Delta t\,F_j\,\bm p^{\,n}\,\bigr],
\end{equation}
and the operand in brackets carries a subtraction. Under the conservative closure $\bm1^\top F_j=0$, so $F_j\bm p^{\,n}$ sums to zero and, unless $\bm p^{\,n}$ is stationary for $F_j$, has entries of both signs. The operand is therefore signed for every nonnegative $\bm p^{\,n}$ of interest, and entrywise nonnegativity of the matrix in front of it constrains its action on the nonnegative cone only. The positivity of the resolvent, or of any other factor placed in that position, is simply not used. Replacing the resolvent by the Pad\'e$(0,2)$ map of \cref{ssec:pade} changes nothing in this respect and costs order, because the cancellation in the order argument above uses $(1-w)^{-1}=1+w+w^2+O(w^3)$ whereas $r_{02}(w)=1+w+w^2/2+O(w^3)$. The resulting diagonal defect is closed-form and can be repaired by adding $\tfrac{\theta^2\Delta t^2}{2}F_j^2\bm p^{\,n}$ to the right-hand side of \eqref{eq:sccstage} at the cost of one banded matrix-vector product, which restores second order and adds one further signed contribution to the operand. The subtractions are not confined to the explicit predictor $Y_0=(\mathcal I+\Delta t\,F)\bm p^{\,n}$; they are the mechanism by which a stabilizing correction achieves its stability, and they therefore appear at every stage of the scheme.

The dividing line is therefore multiplicative against affine. A step written as a product of rational maps of the generators inherits entrywise nonnegativity factor by factor, as in \cref{prop:posadi}, and transposes factor by factor into a consistent scheme for the dual equation, as in \cref{ssec:transpose}. A step written as an affine combination of its stage vectors admits neither argument, and one is left to analyse the assembled transition matrix directly. This is also why the forward construction of \cite{Itkin2015Forward}, which recovers a forward scheme by transposing the explicitly assembled transition matrix of a backward HV or MCS step, is not needed for \eqref{eq:dfadi}. Every factor there is already an explicit rational function of a generator, and the transposition is immediate.

\begin{proposition}[Exact conservation for every step size]\label{prop:massadi}
Assume the conservative closures $\bm1^\top L_x=\bm1^\top L_y=0$ and $\bm1^\top A_{xy}=0$ (edge and corner rows included). Then every factor of \eqref{eq:dfadi} is a rational function $r$ of a zero-column-sum generator with $r(0)=1$, hence $\bm1^\top\Phi_\alpha=\bm1^\top$ and $\bm1^\top\Phi_{xy}=\bm1^\top$, and the composite propagator satisfies $\bm1^\top\mathcal S_{\mathrm{ADI}}(\Delta t)=\bm1^\top$ for every $\Delta t>0$: discrete mass is conserved exactly, irrespective of the positivity windows. For the plain one-sided closure of the cross stencil the statement holds up to the $O(\Delta t\,h)$ boundary leakage quantified in \cite{ItkinDF2026}.
\end{proposition}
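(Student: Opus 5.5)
The plan is to verify the left-invariance $\bm1^\top\Phi=\bm1^\top$ for each of the five factors of \eqref{eq:dfadi} separately, and then multiply. I begin with the directional factors. Since $\bm1^\top L_x=0$, the Kronecker lift satisfies $(\bm1_{N_x}\otimes\bm1_{N_y})^\top(L_x\otimes I_{N_y})=(\bm1^\top L_x)\otimes\bm1^\top=0$, and the same holds for $A_y$. Thus $A_x$ and $A_y$ are zero-column-sum generators on the full grid.

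For $\Phi_\alpha(s)=r_{02}(sA_\alpha)$ I write $M=\mathcal I-sA_\alpha+\tfrac{s^2}{2}A_\alpha^2$. Then $\bm1^\top M=\bm1^\top$, because every term beyond the identity begins with $\bm1^\top A_\alpha=0$. Invertibility of $M$ is required. Its eigenvalues are $1-s\lambda+s^2\lambda^2/2$ for $\lambda\in\operatorname{spec}(A_\alpha)$, and these vanish only if $s\lambda=1\pm\mathrm i$, which lies in the open right half-plane. Since $\operatorname{spec}(A_\alpha)=\operatorname{spec}(L_\alpha)$ has nonpositive real part (spectral abscissa $0$), $M$ is invertible for every $s>0$. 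Multiplying $\bm1^\top M=\bm1^\top$ on the right by $M^{-1}$ gives $\bm1^\top=\bm1^\top M^{-1}$. This is the mass corollary of \cref{sec:spatial} with $r(0)=1$, but I will spell it out so that the role of the pole location is explicit.

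For the central factor \eqref{eq:central} I argue the same way. By hypothesis $\bm1^\top A_{xy}=0$, so $\bm1^\top(\mathcal I\pm\tfrac{\Delta t}{2}A_{xy})=\bm1^\top$. Invertibility of $\mathcal I-\tfrac{\Delta t}{2}A_{xy}$ holds for every $\Delta t>0$, because $A_{xy}$ has real non-positive spectrum (\cref{sec:spatial}), so $1-\tfrac{\Delta t}{2}\lambda\ge1$. Hence $\bm1^\top(\mathcal I-\tfrac{\Delta t}{2}A_{xy})^{-1}=\bm1^\top$, and composing with the explicit half gives $\bm1^\top\Phi_{xy}=\bm1^\top$.

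Chaining the five identities from left to right then gives $\bm1^\top\mathcal S_{\mathrm{ADI}}(\Delta t)=\bm1^\top$ for all $\Delta t>0$. No positivity hypothesis is used anywhere, which establishes the clause ``irrespective of the positivity windows''.

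The one subtle point is that in practice $\Phi_{xy}$ is applied by a finite Picard iteration rather than exactly. The proposition concerns the exact factor, so I will note that the statement is about the propagator \eqref{eq:dfadi} as defined, with the iteration converged.

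For the last sentence, on the plain one-sided closure, I do not reprove anything. There $\bm1^\top A_{xy}=\bm e^\top$ with a boundary defect vector of size $O(h)$, as quantified in \cite{ItkinDF2026}. Expanding $\Phi_{xy}=\mathcal I+\Delta t A_{xy}+O(\Delta t^2)$ gives $\bm1^\top\Phi_{xy}-\bm1^\top=O(\Delta t\,h)$ per step, while the directional factors remain exact. So the leakage of the composite step is inherited solely from the central factor, and I simply cite the companion paper's bound.
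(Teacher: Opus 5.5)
Your proposal is correct and is the paper's argument: the paper's proof applies the mass corollary of \cref{sec:spatial} factor by factor and chains the five identities, and you have unpacked that corollary, including the pole-avoidance check (poles $1\pm\mathrm{i}$ and $z=2$ against spectra in the closed left half-plane) that the paper leaves implicit. For the plain one-sided closure you and the paper both rely on \cite{ItkinDF2026}, but your Taylor heuristic is loose: the $O(\Delta t^2)$ remainder carries powers of the mesh-dependent $\|A_{xy}\|$, whereas the exact identity $\bm1^\top\Phi_{xy}-\bm1^\top=\Delta t\,\bm1^\top A_{xy}(\mathcal I-\tfrac{\Delta t}{2}A_{xy})^{-1}$ isolates the leakage as the boundary defect $\bm1^\top A_{xy}$ propagated through the implicit half.
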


\begin{proof}
See \cref{app:massadi}.
\end{proof}

\subsection{Positivity of the composite step} \label{ssec:posadi}

\begin{proposition}[Conditional positivity]\label{prop:posadi}
Suppose the step size $\Delta t$ satisfies simultaneously
\begin{enumerate}
\item[\textup{(i)}] $\tfrac{\Delta t}{2}\ge\gamma_r^{(\alpha)}$ for $\alpha\in\{x,y\}$, the Pad\'e windows of $L_x$, $L_y$ from \cref{thm:pade} (which lift to $A_x$, $A_y$ with the same thresholds by the Kronecker structure);
\item[\textup{(ii)}] $\Delta t\le\Theta$, the window of the central factor from \cite{ItkinDF2026}.
\end{enumerate}
Then every factor of \eqref{eq:dfadi} is entrywise nonnegative, the composite propagator is nonnegative with unit column sums, i.e. column-stochastic, and the scheme is positive and $\ell_1$-nonexpansive on the nonnegative cone. The admissible region $[2\gamma_r,\Theta]$, with $\gamma_r=\max_\alpha\gamma_r^{(\alpha)}$, is bounded below by the directional thresholds and above by the central factor's window, structurally the same as the region $[2\tau_0,\Theta]$ of the exponential scheme in \cite{ItkinDF2026}. For the Crank--Nicolson realization of the directional factors no per-factor guarantee is available (\cref{rem:cnempty}), and its positivity is reported empirically in \cref{sec:numerics}.
\end{proposition}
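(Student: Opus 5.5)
The proof is factor by factor, with the composition handled by closure of the column-stochastic matrices under multiplication.

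\emph{Directional factors.} We first lift the one-dimensional windows to the Kronecker lifts. Since $A_x=L_x\otimes I_{N_y}$, we have $A_x^k=L_x^k\otimes I$ for every $k$. Therefore $\mathcal I-sA_x+\tfrac{s^2}{2}A_x^2=(I-sL_x+\tfrac{s^2}{2}L_x^2)\otimes I$, and its inverse is $r_{02}(sL_x)\otimes I_{N_y}$. The same holds for $A_y=I_{N_x}\otimes L_y$. A Kronecker product with the identity is entrywise nonnegative exactly when its nontrivial factor is. Hence hypothesis (i), $s=\Delta t/2\ge\gamma_r^{(\alpha)}$, together with \cref{thm:pade} gives $\Phi_\alpha(\Delta t/2)\ge0$ for $\alpha\in\{x,y\}$, with the same thresholds as the one-dimensional operators.

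This step needs one caveat. The lift has zero blocks, so strict positivity is lost and we only claim nonnegativity. The lift is also reducible, but this is harmless: \cref{thm:pade} is applied to the irreducible $L_\alpha$ and never to $A_\alpha$.

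\emph{Central factor.} Hypothesis (ii), $\Delta t\le\Theta$, is by definition the window on which \eqref{eq:central} is entrywise nonnegative, as imported from \cite{ItkinDF2026} in \cref{sec:spatial}. Nothing new needs to be proved here. We only check that the operator is the same one and that the factor is invoked at the full step $\Delta t$ rather than at a substep.

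\emph{Composition.} A product of entrywise nonnegative matrices is nonnegative, so $\mathcal S_{\mathrm{ADI}}(\Delta t)\ge0$. Unit column sums follow from \cref{prop:massadi}: each factor satisfies $\bm1^\top\Phi=\bm1^\top$, hence so does the product. A nonnegative matrix with unit column sums is column-stochastic. For $\bm p\ge0$ this gives $\mathcal S\bm p\ge0$ and $\|\mathcal S\bm p\|_1=\bm1^\top\mathcal S\bm p=\|\bm p\|_1$. For general $\bm p$ it gives $\|\mathcal S\bm p\|_1\le\sum_j\sum_i\mathcal S_{ij}|p_j|=\|\bm p\|_1$, which is the nonexpansiveness claim.

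The admissible region is then the intersection of $\Delta t\ge2\max_\alpha\gamma_r^{(\alpha)}$ with $\Delta t\le\Theta$, namely $[2\gamma_r,\Theta]$. Its structural analogy with $[2\tau_0,\Theta]$ follows because the exponential has the same lower-threshold mechanism.

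\emph{Crank--Nicolson remark.} This requires no proof beyond pointing to \cref{thm:cayley} and \cref{rem:cnempty}.

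\emph{Main obstacle.} The only genuinely delicate point is bookkeeping rather than analysis: the central window must be invoked at step $\Delta t$ and the directional windows at $\Delta t/2$. Beyond that, the argument requires confirming that the hypotheses of \cref{thm:pade} (irreducibility, conservative closure, simple zero eigenvalue, eventual exponential positivity) hold for each $L_\alpha$ at the frozen midpoint coefficients. This is assumed from \cref{sec:spatial}.
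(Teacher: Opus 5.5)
Your proposal is correct and follows the paper's proof essentially line for line. The steps match: factor-by-factor nonnegativity from the Pad\'e windows at the substep $\Delta t/2$ and from the central window at the full step $\Delta t$, closure of nonnegative matrices under products, unit column sums from \cref{prop:massadi}, and the identity $\|\mathcal S\bm p\|_1=\bm1^\top\mathcal S\bm p=\|\bm p\|_1$ on the nonnegative cone. Your explicit computation $r_{02}(sA_x)=r_{02}(sL_x)\otimes I_{N_y}$, with its analogue for $A_y$, fills in the Kronecker lift that the paper only asserts, and your bound for signed $\bm p$ slightly strengthens the paper's statement, which is restricted to the cone.
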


\begin{proof}
See \cref{app:posadi}.
\end{proof}

\begin{myremark}[The admissible step region]\label{rem:region}
The admissible region $[2\gamma_r,\Theta]$ is the intersection of the one-sided directional windows from below and the central factor's window from above. Whether it is nonempty on a given mesh is a computable question: both thresholds are explicit functions of the assembled matrices and can be evaluated once per problem at negligible cost, and the comparison $2\gamma_r\le\Theta$ replaces the Cayley nonemptiness question, which our experiments resolved negatively (\cref{rem:cnempty}). When the region is empty, three fallbacks are available: (i) the backward Euler directional factors, first order but with the one-sided window of \cref{thm:resolvent}, which by \cref{crossWindow} is the fallback that actually delivers a nonempty region once the mesh is fine enough; (ii) the exponential directional factors of \cite{ItkinDF2026}, which lower the left endpoint to $2\tau_0$ at Krylov cost, though on a strongly coupled problem $2\tau_0$ may itself exceed $\Theta$, as it does throughout \cref{crossWindow}; (iii) accepting bounded negative undershoots monitored at runtime, with mass still conserved exactly by \cref{prop:massadi} under the conservative closure. When the region is empty the criterion still does useful work: it certifies that the empirical positivity of the Pad\'e$(0,2)$ factors is a property of the multiplicative structure (\cref{ssec:relation}), the same structure the stabilizing-correction schemes lack. The numerical section maps the admissible regions for all benchmark problems.
\end{myremark}

\subsection{Complexity} \label{ssec:complexity}

Each directional factor of \eqref{eq:dfadi} costs $N_y$ (resp.\ $N_x$) banded solves of size $N_x$ (resp.\ $N_y$), so the four directional sweeps of a step are $O(N_xN_y)$ with a small constant, and the central factor adds $k_{\mathrm{in}}$ Picard iterations of the same order. The exponential scheme replaces each directional sweep by a Krylov approximation of the action, at $O(m^2N+m^3)$ with $m$ the subspace dimension. \Cref{cost} measures both, together with the two comparators.
\begin{table}[!htb]
\centering
\scalebox{0.85}{
\begin{tabular}{|c|c|c|r|r|r|r|c|}
\toprule
\rowcolor[rgb]{ .792,  .929,  .984}
\multirow{2}{*}{$N$} & \multirow{2}{*}{$m$} & \multirow{2}{*}{$k_{\mathrm{in}}$} & \multicolumn{4}{c|}{\textbf{wall time per step, ms}} & \multirow{2}{*}{\textbf{speedup}} \\
\cmidrule{4-7} \rowcolor[rgb]{ .792,  .929,  .984}
 & & & \multicolumn{1}{c|}{DF-Strang} & \multicolumn{1}{c|}{DF-ADI} & \multicolumn{1}{c|}{HV} & \multicolumn{1}{c|}{MCS} & \\
\hline
 48 &  7 & 5.0 &  11.35 & 1.15 & 0.49 & 0.47 &  9.9 \\
\hline
 96 &  8 & 4.0 &  26.34 & 1.68 & 0.92 & 1.01 & 15.7 \\
\hline
160 & 10 & 4.0 &  65.46 & 3.33 & 2.03 & 2.17 & 19.7 \\
\hline
\rowcolor[rgb]{ .557,  .851,  .451}
240 & 12 & 3.2 & 189.37 & 5.92 & 4.04 & 4.35 & 32.0 \\
\bottomrule
\end{tabular}}
\caption{Cost per step at $\Delta t = 0.05$ in the moderately correlated Regime~I of \cref{ouConv}, $\rho(t)=0.8+0.1\cos(0.7t)$, with the Pad\'e$(0,2)$ directional factors of the flagship. Here $m$ is the Krylov dimension required by the directional exponential at tolerance $10^{-8}$ and $k_{\mathrm{in}}$ is the average number of Picard iterations of the central factor. DF-Strang is timed with an Arnoldi approximation of the action, not with a dense exponential, so the comparison is against the algorithm the companion paper actually uses. The last column is the ratio of the first two timings.}
\label{cost}%
\end{table}%

Two features of \cref{cost} carry the complexity claim. The Krylov dimension grows with the mesh, from $7$ to $12$ over the range tested, so the directional part of the exponential scheme is $O(m^2N)$ with a constant that is itself mesh dependent. The Picard count moves the other way, from $5.0$ down to $3.2$, confirming the mesh-robust bound of \cite{Itkin3D,ItkinDF2026} and leaving the ADI step genuinely linear. The resulting speedup is not a fixed factor but a growing one, from $10$ at $N=48$ to $32$ at $N=240$. Over this range the DF-ADI timings themselves grow more slowly than the unknown count, so they are consistent with the $O(N_xN_y)$ bound without establishing its asymptotics; what they do establish is that the ADI cost is insensitive to the mesh in the way the Krylov cost is not.

One qualification should be attached to any wall-time statement of this kind. In the separable case the directional exponentials are exact, so the exponential scheme is the more accurate propagator per step, and a comparison at equal step size flatters the ADI scheme. \Cref{crossConv} shows the two agreeing to three digits at every level, so at this accuracy the ratio in \cref{cost} is also the ratio at fixed error; in regimes where the two separate, the fixed-error comparison is the one to quote.

\section{Backward Kolmogorov equation and jumps} \label{sec:BKE}

The scheme \eqref{eq:dfadi} advances the probability density of the forward
Fokker--Planck equation. Two extensions are required in practice: the backward
Kolmogorov equation, whose generator is the formal transpose of the forward one,
and jump-diffusion models, which add a nonlocal operator $\mathcal L_J$ to the
splitting. Both extensions preserve the structural properties of the forward
scheme: second-order accuracy, exact conservation, and conditional positivity,
because transposition and jump insertion are compatible with the rational-map
analysis of \cref{sec:resolvent}. The admissible-step conditions, however, are
not inherited unchanged. The jump factor has thresholds of its own, and for the
Pad'e$(0,2)$ map these are substantially more restrictive than the directional
thresholds, as demonstrated in \cref{ssec:numjumps}. For jump operators, the
discretizations developed in \cite{ItkinBook} produce generators $J$ that are
either Metzler (and hence yield an M-matrix resolvent), as for Merton or Kou
jumps, or eventually nonnegative (EN-matrix), as for CGMY with $Y>0$. In the
Metzler case, the exponential and resolvent are nonnegative for every step size,
whereas the Pad'e$(0,2)$ factor retains a strictly positive threshold, as
established by \cref{rem:pademetzler} and quantified in \cref{ssec:numjumps}. In
the EM case, each factor has a one-sided admissible window, exactly as in
\cref{thm:pade}. Following the paradigm of \cite{ItkinBook}, the jump step is
placed at the centre of the Strang composition.

\subsection{Transpose duality} \label{ssec:transpose}

The backward propagator is obtained by transposing each factor of
\eqref{eq:dfadi} and reversing the order:
\begin{equation}\label{eq:dfadiBKE}
\mathcal S_{\mathrm{ADI}}^{\mathrm{bwd}}(\Delta t)
= \Phi_x^\top\,\Phi_y^\top\,\Phi_{xy}^\top\,\Phi_y^\top\,\Phi_x^\top .
\end{equation}
Because $(\mathcal I-\gamma L^\top)^{-1}=((\mathcal I-\gamma L)^{-1})^\top$,
entrywise nonnegativity of every factor, and hence the positivity windows of
\cref{prop:posadi}, transfer to the backward scheme unchanged.  The conserved
left null vector $\bm1^\top$ of the forward generators becomes the right null
vector of the backward ones, which is the discrete statement that the backward
propagator preserves constants.

When jumps are present the transposed jump factor $\Phi_J^\top$ is placed
symmetrically in the backward composition.  Under the constructions of
\cite{ItkinBook} the discretized jump generator satisfies $\bm1^\top J=0$, so
its transpose $J^\top$ has zero row sums and the rational map $r(sJ^\top)$
preserves the constant vector $\bm1$.  Consequently the backward jump step
inherits the unconditional or eventual positivity of the forward jump step
without additional analysis.

A word of caution concerns boundary conditions.  The forward density is
computed with a conservative zero-flux closure, whereas the backward value
function typically carries Dirichlet or other inflow conditions.  The asymmetry
between density and price variables, and the treatment of the jump term at the
boundary, are discussed in detail in \cite{Itkin2015Forward,ItkinBook}.

\subsection{Jumps} \label{ssec:jumps}

With the jump operator inserted at the centre of the directional sweeps the
symmetric composition reads
\begin{equation}\label{eq:dfadi-jump}
\bm p^{\,n+1}
= \Phi_x\!\Bigl(\tfrac{\Delta t}{2}\Bigr)\,
  \Phi_y\!\Bigl(\tfrac{\Delta t}{2}\Bigr)\,
  \Phi_J\!\Bigl(\tfrac{\Delta t}{2}\Bigr)\,
  \Phi_{xy}(\Delta t)\,
  \Phi_J\!\Bigl(\tfrac{\Delta t}{2}\Bigr)\,
  \Phi_y\!\Bigl(\tfrac{\Delta t}{2}\Bigr)\,
  \Phi_x\!\Bigl(\tfrac{\Delta t}{2}\Bigr)\,
  \bm p^{\,n}.
\end{equation}
The factors $\Phi_\alpha$ and $\Phi_{xy}$ are defined as in \cref{sec:dfadi}, while
the jump factor is $\Phi_J(s)=r(sJ)$. For frozen coefficients, the jump flow
$e^{sJ}$ is exact in time, so two half-steps compose to the full step exactly;
for a rational approximation, the symmetric arrangement preserves second-order
accuracy. \Cref{ssec:numjumps} presents the corresponding one-dimensional
experiments for the Merton and Kou kernels. In this setting, the jump generator
is the negative of an M-matrix, so \cref{cor:rinf} applies directly: no additional
positivity condition is required for either the exponential or the resolvent,
whereas the Pad'e map still requires a finite threshold.

\myparagraph{M-matrix and EM-matrix jump generators.}
For compound-Poisson jumps of Merton (Gaussian) or Kou (double-exponential) type,
the matrix $J$ produced by the discretizations in \cite{ItkinBook} is Metzler
with zero column sums; equivalently $-J$ is a singular M-matrix.  Consequently
$e^{sJ}$ and the resolvent $(\mathcal I-sJ)^{-1}$ are entrywise nonnegative for
every $s>0$, so $\tau_0^{(J)}=\gamma_0^{(J)}=0$ and the exponential and
backward-Euler jump factors are unconditionally nonnegative, exactly as the
backward-Euler directional factors of the M-matrix variant.

The Pad'e$(0,2)$ map is \emph{not} covered by this argument. Its denominator
$\mathcal I-sJ+\tfrac{s^2}{2}J^2$ is not an M-matrix because $J^2$ destroys the
required sign pattern outside the original sparsity bands, while
\cref{rem:pademetzler} shows that a Metzler generator alone does not ensure
entrywise positivity of $r_{02}$. The threshold $\gamma_r^{(J)}$ is therefore
strictly positive and, in practice, relatively large: \cref{jumpTab} gives
$\gamma_r^{(J)}=11.4$ for Merton and $48.5$ for Kou, one to two orders of
magnitude larger than the directional thresholds reported in \cref{ouThresh}.
Thus, for jump models, the resolvent is preferable to Pad'e$(0,2)$ for the jump
step; see \cref{ssec:numjumps}.

\myparagraph{Implementation and complexity.}  The jump matrix $J$ is dense,
reflecting the nonlocal nature of the jump operator.  For translation-invariant
kernels the action of $J$ on a vector is a discrete convolution, which can be
evaluated by FFT in $O(N\log N)$ operations.  The Pad\'e$(0,2)$ jump factor then
requires two FFT-based solves per step, or, if the jump step is treated by the
exponential, a single Krylov action on the dense matrix.  In either case the
directional and mixed sweeps remain $O(N)$, so the asymptotic cost per step is
governed by the jump treatment.

\myparagraph{Mass conservation and martingale corrections.}  Under the
conservative closure $\bm1^\top J=0$ the jump factor satisfies
$\bm1^\top\Phi_J(s)=\bm1^\top$ for every rational map with $r(0)=1$, and the
composite step \eqref{eq:dfadi-jump} conserves mass exactly.  For models with
state-dependent jump intensity or measure, the natural discretization may fail
to have exact zero column sums.  Following \cite{ItkinBook}, a diagonal
correction $\mathcal D$ can be added so that $\bm1^\top(J+\mathcal D)=0$; the
corrected jump factor $\tilde\Phi_J(s)=r\bigl(s(J+\mathcal D)\bigr)$ then
restores exact conservation.  In the backward equation the same correction
guarantees that the transposed operator preserves constants, i.e. the discrete
martingale property holds.

\section{Numerical experiments} \label{sec:numerics}

This section reports the numerical experiments. We rerun the benchmarks of \cite{ItkinDF2026} unchanged, so that the comparison with the exponential scheme is direct. Six propagators are compared throughout, joined in the one-dimensional tests of \cref{ssec:kramers} and \cref{ssec:peclet} by a seventh, a centred Crank--Nicolson baseline that does not use the DF operators. DF-Strang is the exponential scheme of \cite{ItkinDF2026}, with the directional exponentials applied by an Arnoldi approximation of the action. DF-ADI is \eqref{eq:dfadi} with the Pad\'e$(0,2)$ directional factors \eqref{eq:dirfactor}, and we also run it with the Crank--Nicolson factors, which are positive in practice but carry no per-factor guarantee (\cref{rem:cnempty}), and with the backward Euler factors, which are the first-order fallback of \cref{rem:region}. The last two are the Hundsdorfer--Verwer and modified Craig--Sneyd schemes with the explicit one-sided cross stencil, run at $\theta = \frac12 + \frac{\sqrt3}{6}$ and $\theta = \frac13$ respectively. The reported metrics are the discrete $L_2$ error against the reference solution, the observed spatial and temporal orders, the most negative value attained over the run, the mass defect and the wall time per step. All computations were performed in Matlab R2025b on an Apple silicon machine. The supporting code is available at
\url{https://github.com/rakhymzhan11/DF-ADI}.

\subsection{One-dimensional benchmark: the Ornstein--Uhlenbeck process} \label{ssec:ou}

In one dimension the mixed block is absent, so this benchmark isolates the directional factors and therefore the $r(\infty)$ criterion itself. The Ornstein--Uhlenbeck process
\begin{equation} \label{eq:ou}
\partial_t p = \kappa\,\partial_x[(x-m)p] + \tfrac12 \sigma^2 \partial_{xx} p, \qquad \mu(x) = -\kappa(x-m), \quad D = \tfrac12\sigma^2,
\end{equation}
has the Gaussian transition density $\mathcal{N}(\bar m(t), v(t))$ with $\bar m(t) = m + (x_0-m)e^{-\kappa t}$ and $v(t) = \frac{\sigma^2}{2\kappa}(1-e^{-2\kappa t})$, which serves as the exact reference. We take $\kappa = \sigma = 1$, $m=0$ on the domain $[-6,6]$ with the conservative closure, and integrate to $T=0.5$ from the initial datum $p_0 = \mathcal{N}(\bar m(t_0), v(t_0))$ at age $t_0 = 0.15$.

\test{1}{Orders of convergence.} \Cref{ouConv} reports the three refinement paths separately, since each isolates a different error. Panel (a) refines the mesh at a fixed small step $\Delta t = 2\cdot10^{-4}$ and measures against the analytic density, so it sees the spatial defect alone. Panel (b) fixes the mesh at $n=401$ and refines the step against the exact-in-time semidiscrete solution $e^{T\cdot L}\bm p_0$ computed on the same mesh, so it sees the time integrator alone. Panel (c) refines both jointly as $\Delta t \sim h$ against the analytic density, which is the regime the assembled scheme is used in.

The temporal panel is the informative one. The exponential column sits at the round-off floor, below $4\cdot10^{-14}$, at every step size, as it must, since on a frozen-coefficient one-dimensional problem $e^{\Delta t L}$ is exact in time. That column therefore validates the reference rather than the scheme. Against it the Pad\'e$(0,2)$ factor converges at rate $1.98$, Crank--Nicolson at $2.00$ and backward Euler at $1.00$, which is the expected ordering and confirms that the second-order accuracy of the flagship is not paid for anywhere. Under joint refinement all three second-order propagators reproduce the analytic density at rate $1.98$ while backward Euler drops to $0.96$, so the first-order fallback of \cref{rem:region} costs a full order, as expected. Mass is conserved to round-off by every propagator, independently of positivity: $3\cdot10^{-14}$ or better for the exponential, backward Euler and Crank--Nicolson factors, and $1\cdot10^{-12}$ for Pad\'e$(0,2)$, whose extra $\tfrac{\Delta t^2}{2}L^2$ term costs a little conditioning. This is \cref{prop:massadi} in the one-dimensional setting.
\begin{table}[!htb]
\begin{center}
\scalebox{0.9}{
\begin{tabular}{|c|c|r|c|r|c|r|c|r|c|}
\hline
\rowcolor[rgb]{ .792,  .929,  .984}
\multirow{2}{*}{$n$} & \multirow{2}{*}{$h$} & \multicolumn{2}{c|}{\textbf{exponential}} & \multicolumn{2}{c|}{\textbf{Pad\'e$(0,2)$}} & \multicolumn{2}{c|}{\textbf{Backward Euler}} & \multicolumn{2}{c|}{\textbf{Crank--Nicolson}} \\
\cmidrule{3-10} \rowcolor[rgb]{ .792,  .929,  .984}
 & & \multicolumn{1}{c|}{$\|e_h\|_2$} & rate & \multicolumn{1}{c|}{$\|e_h\|_2$} & rate & \multicolumn{1}{c|}{$\|e_h\|_2$} & rate & \multicolumn{1}{c|}{$\|e_h\|_2$} & rate \\
\hline
101 & 0.1200 & 4.029e-03 & ---  & 4.029e-03 & ---  & 3.998e-03 & ---  & 4.029e-03 & ---  \\
\hline
201 & 0.0600 & 1.089e-03 & 1.89 & 1.089e-03 & 1.89 & 1.058e-03 & 1.92 & 1.089e-03 & 1.89 \\
\hline
401 & 0.0300 & 2.817e-04 & 1.95 & 2.817e-04 & 1.95 & 2.535e-04 & 2.06 & 2.817e-04 & 1.95 \\
\hline
801 & 0.0150 & 7.153e-05 & 1.98 & 7.152e-05 & 1.98 & 5.634e-05 & 2.17 & 7.153e-05 & 1.98 \\
\hline
\end{tabular}}

\vspace{0.6\baselineskip}
(a) spatial refinement at fixed $\Delta t = 2\cdot 10^{-4}$, error against the analytic density
\vspace{\baselineskip}

\scalebox{0.9}{
\begin{tabular}{|c|c|r|c|r|c|r|c|r|c|}
\hline
\rowcolor[rgb]{ .792,  .929,  .984}
\multirow{2}{*}{$N_t$} & \multirow{2}{*}{$\Delta t$} & \multicolumn{2}{c|}{\textbf{exponential}} & \multicolumn{2}{c|}{\textbf{Pad\'e$(0,2)$}} & \multicolumn{2}{c|}{\textbf{Backward Euler}} & \multicolumn{2}{c|}{\textbf{Crank--Nicolson}} \\
\cmidrule{3-10} \rowcolor[rgb]{ .792,  .929,  .984}
 & & \multicolumn{1}{c|}{$\|e_h\|_2$} & rate & \multicolumn{1}{c|}{$\|e_h\|_2$} & rate & \multicolumn{1}{c|}{$\|e_h\|_2$} & rate & \multicolumn{1}{c|}{$\|e_h\|_2$} & rate \\
\hline
 10 & 0.0500 & 3.734e-14 & ---  & 6.537e-04 & ---  & 1.222e-02 & ---  & 3.867e-04 & ---  \\
\hline
 20 & 0.0250 & 3.751e-14 & ---  & 1.768e-04 & 1.89 & 6.181e-03 & 0.98 & 9.661e-05 & 2.00 \\
\hline
 40 & 0.0125 & 3.727e-14 & ---  & 4.614e-05 & 1.94 & 3.108e-03 & 0.99 & 2.415e-05 & 2.00 \\
\hline
 80 & 0.0063 & 3.633e-14 & ---  & 1.180e-05 & 1.97 & 1.558e-03 & 1.00 & 6.037e-06 & 2.00 \\
\hline
160 & 0.0031 & 3.703e-14 & ---  & 2.983e-06 & 1.98 & 7.803e-04 & 1.00 & 1.509e-06 & 2.00 \\
\hline
\end{tabular}}

\vspace{0.6\baselineskip}
(b) temporal refinement at fixed $n=401$, error against the exact-in-time $e^{T\cdot L}\bm p_0$
\vspace{\baselineskip}

\scalebox{0.9}{
\begin{tabular}{|c|c|c|r|c|r|c|r|c|r|c|}
\hline
\rowcolor[rgb]{ .792,  .929,  .984}
\multirow{2}{*}{$n$} & \multirow{2}{*}{$h$} & \multirow{2}{*}{$N_t$} & \multicolumn{2}{c|}{\textbf{exponential}} & \multicolumn{2}{c|}{\textbf{Pad\'e$(0,2)$}} & \multicolumn{2}{c|}{\textbf{Backward Euler}} & \multicolumn{2}{c|}{\textbf{Crank--Nicolson}} \\
\cmidrule{4-11} \rowcolor[rgb]{ .792,  .929,  .984}
 & & & \multicolumn{1}{c|}{$\|e_h\|_2$} & rate & \multicolumn{1}{c|}{$\|e_h\|_2$} & rate & \multicolumn{1}{c|}{$\|e_h\|_2$} & rate & \multicolumn{1}{c|}{$\|e_h\|_2$} & rate \\
\hline
101 & 0.1200 & 10 & 4.029e-03 & ---  & 3.636e-03 & ---  & 1.021e-02 & ---  & 4.296e-03 & ---  \\
\hline
201 & 0.0600 & 20 & 1.089e-03 & 1.89 & 9.834e-04 & 1.89 & 5.567e-03 & 0.88 & 1.155e-03 & 1.90 \\
\hline
401 & 0.0300 & 40 & 2.817e-04 & 1.95 & 2.542e-04 & 1.95 & 2.940e-03 & 0.92 & 2.979e-04 & 1.96 \\
\hline
801 & 0.0150 & 80 & 7.153e-05 & 1.98 & 6.452e-05 & 1.98 & 1.514e-03 & 0.96 & 7.558e-05 & 1.98 \\
\hline
\end{tabular}}

\vspace{0.6\baselineskip}
(c) joint refinement $\Delta t \sim h$, error against the analytic density
\end{center}
\caption{Convergence on the Ornstein--Uhlenbeck benchmark \eqref{eq:ou} with $\kappa=\sigma=1$, $m=0$, domain $[-6,6]$, $T=0.5$, conservative closure. In panel (b) the exponential is exact in time and its column is the round-off floor, which validates the reference; no rate is quoted for it. Mass is conserved to $3\cdot10^{-14}$ (exponential), $1\cdot10^{-12}$ (Pad\'e), $5\cdot10^{-15}$ (backward Euler) and $7\cdot10^{-15}$ (Crank--Nicolson) at the finest level.}
\label{ouConv}
\end{table}

\test{2}{ Thresholds and the admissible step region.} We next compute the thresholds of \cref{sec:resolvent} directly from the assembled matrices. For a given $L$ each threshold is the endpoint of the set on which the corresponding matrix function is entrywise nonnegative, located by bisection on its smallest entry, with entries counted as nonnegative when they exceed $-10^{-12}$. \Cref{ouThresh} reports $\tau_0$ for the exponential, $\gamma_0$ for the backward Euler resolvent, $\gamma_r$ for the Pad\'e$(0,2)$ map, and the two Cayley endpoints $\gamma_-$ and $\gamma_+$ of \cref{thm:cayley}, in the advection-dominated regime $\kappa=6$, $\sigma=0.5$ on $[-2.5,2.5]$, where the cell P\'eclet number exceeds two and the operator is strongly EM.

Three features of the table matter. First, $\tau_0$, $\gamma_0$ and $\gamma_r$ are all strictly positive and finite, so the exponential, backward Euler and Pad\'e factors each carry a genuine one-sided window, in accordance with \cref{thm:resolvent} and \cref{thm:pade}. Second, $\gamma_r$ is of the same order as $\tau_0$ and, unlike $\gamma_0$, does not shrink under refinement, which is the quantitative statement that Pad\'e$(0,2)$ inherits the positivity mechanism of the exponential rather than that of a low-order resolvent. Third, and this is \cref{rem:cnempty}, the Cayley window is empty on every mesh, because the step at which the off-diagonal entries first turn nonnegative already exceeds the step at which the diagonal has descended below one half. The failure is not marginal. Scanning all $\gamma$ on the finest mesh, the least negative entry the Cayley factor ever attains is $-7.7\cdot10^{-3}$, and at $\gamma = \tau_0$, where the exponential is already nonnegative, the Cayley factor still has an entry equal to $-0.98$.

This conclusion applies to the advection-dominated regime tabulated here.
Repeating the sweep for a weakly eventually positive operator, $\kappa=\sigma=1$
with $\mathrm{Pe}\le1.3$, gives the same result on the three coarser meshes. At
$n=161$, however, the endpoints finally cross and a window opens:
$[5.24\cdot10^{-3},,5.93\cdot10^{-3}]$. Its width is only $7\cdot10^{-4}$,
corresponding to a factor of $1.13$ in the substep and remaining orders of
magnitude below the steps at which the one-sided windows occur. Thus, emptiness
of the Cayley window is not a theorem. Rather, \cref{rem:cnempty} should be read
as the empirical conclusion supported by our experiments: on every mesh and in
every regime tested, the trapezoidal factor has either no positivity window or
one too narrow to be practically useful. \Cref{Fwindows} illustrates the
resulting admissible sets.
\begin{table}[!htb]
\centering
\scalebox{0.9}{
\begin{tabular}{|c|c|c|r|r|r|r|r|l|}
\toprule
\rowcolor[rgb]{ .792,  .929,  .984}
$n$ & $h$ & $\mathrm{Pe}_{\max}$ & \multicolumn{1}{c|}{$\tau_0$} & \multicolumn{1}{c|}{$\gamma_0$} & \multicolumn{1}{c|}{$\gamma_r$} & \multicolumn{1}{c|}{$\gamma_-$} & \multicolumn{1}{c|}{$\gamma_+$} & \multicolumn{1}{c|}{\textbf{Cayley window}} \\
\hline
 51 & 0.1000 & 12.00 & 4.439e-01 & 2.906e-02 & 3.115e-01 & 5.813e-02 & 9.059e-03 & empty, $\gamma_->\gamma_+$ \\
\hline
 81 & 0.0625 &  7.50 & 4.029e-01 & 1.649e-02 & 3.141e-01 & 3.297e-02 & 5.393e-03 & empty, $\gamma_->\gamma_+$ \\
\hline
121 & 0.0417 &  5.00 & 3.407e-01 & 8.740e-03 & 3.164e-01 & 1.752e-02 & 3.433e-03 & empty, $\gamma_->\gamma_+$ \\
\hline
\rowcolor[rgb]{ .557,  .851,  .451}
161 & 0.0312 &  3.75 & 2.860e-01 & 5.237e-03 & 3.175e-01 & 1.053e-02 & 2.475e-03 & empty, $\gamma_->\gamma_+$ \\
\bottomrule
\end{tabular}}
\caption{Positivity thresholds of the directional factors for the Ornstein--Uhlenbeck operator in the advection-dominated regime $\kappa=6$, $\sigma=0.5$ on $[-2.5,2.5]$, conservative closure. Here $\tau_0$ belongs to the exponential, $\gamma_0$ to the backward Euler resolvent, $\gamma_r$ to the Pad\'e$(0,2)$ map, and $[\gamma_-,\gamma_+]$ is the Cayley window of \cref{thm:cayley}. All thresholds are computed on the substep, so the admissible steps of the composite scheme are twice these values. The Cayley window is empty on every mesh, which is the data behind \cref{rem:cnempty}.}
\label{ouThresh}%
\end{table}%
\begin{figure}[!htp]
\begin{center}
\hspace*{-0.3in}
\includegraphics[width=0.8\textwidth]{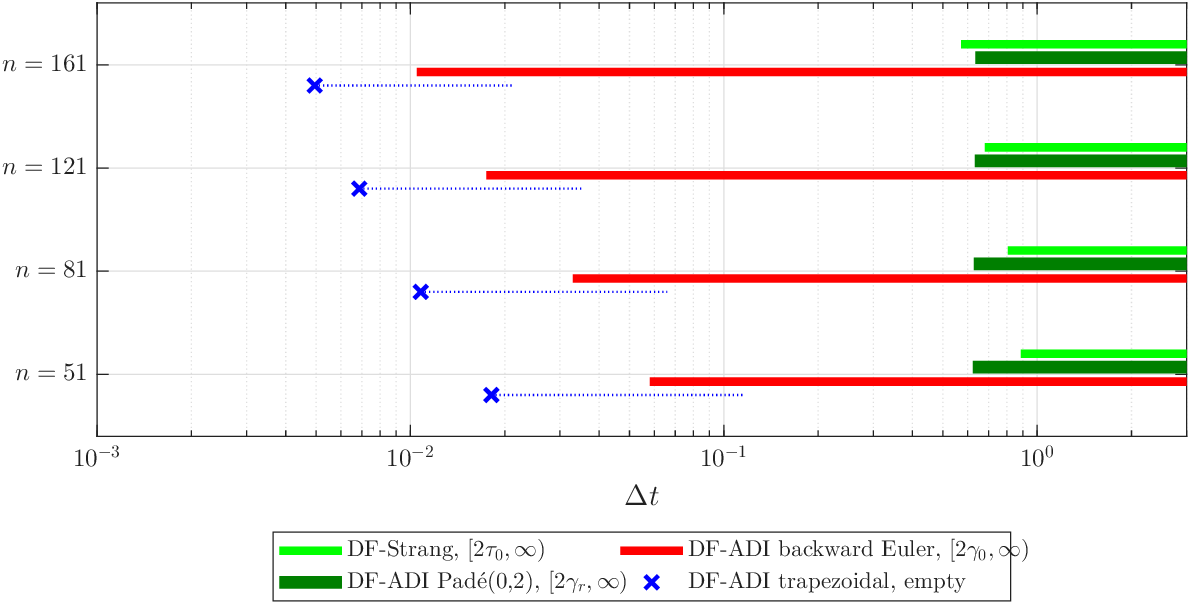}
\end{center}
\caption{Admissible step sets on the Ornstein--Uhlenbeck benchmark of \cref{ouThresh}, one row per mesh. The exponential, Pad\'e$(0,2)$ and backward Euler factors each occupy a one-sided set $[2\tau_0,\infty)$, $[2\gamma_r,\infty)$ and $[2\gamma_0,\infty)$ respectively, since every directional factor of \eqref{eq:dfadi} carries the substep $\Delta t/2$. The trapezoidal factor has no admissible set at all and its collapsed window is marked by a cross.}
\label{Fwindows}
\end{figure}

\begin{myremark}[Numerical certification of the thresholds] \label{rem:certification}
The thresholds above are tolerance thresholds rather than exact ones. For the operators of this benchmark the smallest entries of the stationary projector are of order $e^{-36}$, which is below the resolution of double precision, and the matrix $\mathcal I - \gamma L + \frac{\gamma^2}{2}L^2$ has condition number growing like $\gamma^2\|L\|^2$, so for large $\gamma$ the computed sign of the smallest entry is not reliable. We verified the entries reported here in $40$-digit arithmetic. In particular $\gamma_r$ remains finite and stable under refinement, in agreement with \cref{cor:rinf}(a); a scan carried out in double precision alone would report it as unbounded on the finer meshes, which is an artefact of the conditioning and not a property of the operator.
\end{myremark}

\subsection{Two-dimensional anisotropic Gaussian, strong cross-diffusion} \label{ssec:cross}

We now turn on the mixed derivative and push the correlation towards one, which is where the treatment of the cross term decides the outcome. The drift is linear, $\mu_x = -\vartheta_x(t)x$, $\mu_y = -\vartheta_y(t)y$ with $\vartheta_x(t) = 1.5+0.25\sin t$ and $\vartheta_y(t)=1.5+0.25\cos(0.8t)$, the diffusion is $\Sigma_{xx}=\Sigma_{yy}=1$, and the correlation is $\rho = 0.95$. The initial datum is the Gaussian with mean $(1,-1)^\top$ and covariance $\frac12 \mathcal I$, the domain is $[-6,6]^2$ and $T=0.3$. Since the drift is linear and the diffusion is constant, the exact solution stays Gaussian and its mean and covariance solve a small system of ordinary differential equations, which we integrate to machine precision and use as the reference.

\test{3}{Joint refinement at $\rho=0.95$.} \Cref{crossConv} refines $\Delta t \sim h$ over five levels. The three multiplicative second-order propagators, the exponential and the Pad\'e and Crank--Nicolson realizations of \eqref{eq:dfadi}, are indistinguishable to three digits at every level and converge at rate $2.11$; the backward Euler fallback converges at $2.23$ here because at these steps the error is still dominated by the spatial defect. The two stabilizing-correction comparators behave differently. At this correlation the explicit cross stencil forces a step restriction that the joint path $\Delta t\sim h$ eventually violates, so the Hundsdorfer--Verwer error stops decreasing at the last level and the modified Craig--Sneyd scheme, whose $\theta=\frac13$ damps less, grows without control, reaching $2.37\cdot10^{12}$ at $N=120$. We emphasise that this is a restriction on the step and not a loss of the formal order: on the same mesh $N=120$, halving the step from $N_t=36$ to $N_t=72$ returns the Hundsdorfer--Verwer error to $1.46\cdot10^{-2}$, and it remains there under further halving, so the scheme is simply being run outside its stability region by a path that the multiplicative factors tolerate.

The last row of the table is the positivity comparison. The multiplicative propagators undershoot by about $5\cdot10^{-8}$, six orders of magnitude below the peak density, with the first-order backward Euler factor an order of magnitude worse at $6\cdot10^{-7}$, whereas the explicit-cross comparators undershoot by $4.8\cdot10^{-2}$ and by $2.3\cdot10^{12}$ respectively. The mechanism is the explicit predictor $Y_0 = (\mathcal I + \Delta t F)\bm p^{\,n}$ discussed in \cref{ssec:relation}, and \cref{Fstress} isolates it by sweeping $\rho$ at a fixed mesh.
\begin{table}[!htb]
\centering
\scalebox{0.9}{
\begin{tabular}{|c|c|r|r|r|r|r|r|}
\toprule
\rowcolor[rgb]{ .792,  .929,  .984}
\multirow{2}{*}{$N$} & \multirow{2}{*}{$h$} & \multicolumn{4}{c|}{\textbf{multiplicative factors}} & \multicolumn{2}{c|}{\textbf{explicit cross stencil}} \\
\cmidrule{3-8} \rowcolor[rgb]{ .792,  .929,  .984}
 & & \multicolumn{1}{c|}{exponential} & \multicolumn{1}{c|}{Pad\'e$(0,2)$} & \multicolumn{1}{c|}{Crank--Nicolson} & \multicolumn{1}{c|}{Backward Euler} & \multicolumn{1}{c|}{HV} & \multicolumn{1}{c|}{MCS} \\
\hline
 32 & 0.3871 & 1.222e-01 & 1.222e-01 & 1.222e-01 & 1.172e-01 & 1.664e-01 & 1.746e-01 \\
\hline
 44 & 0.2791 & 8.468e-02 & 8.469e-02 & 8.468e-02 & 8.057e-02 & 1.185e-01 & 1.256e-01 \\
\hline
 64 & 0.1905 & 4.417e-02 & 4.417e-02 & 4.417e-02 & 4.117e-02 & 5.775e-02 & 6.083e-02 \\
\hline
 88 & 0.1379 & 2.281e-02 & 2.281e-02 & 2.281e-02 & 2.063e-02 & 2.796e-02 & 3.259e+00 \\
\hline
120 & 0.1008 & 1.180e-02 & 1.180e-02 & 1.180e-02 & 1.027e-02 & 3.815e-02 & 2.368e+12 \\
\hline
\rowcolor[rgb]{ .557,  .851,  .451}
\multicolumn{2}{|c|}{\textbf{observed rate}} & 2.11 & 2.11 & 2.11 & 2.23 & --- & --- \\
\hline
\multicolumn{2}{|c|}{$\min_n \bm p$, finest} & -4.72e-08 & -5.91e-08 & -4.12e-08 & -6.45e-07 & -4.83e-02 & -2.32e+12 \\
\bottomrule
\end{tabular}}
\caption{Joint refinement $\Delta t \sim h$ on the anisotropic Gaussian with $\rho = 0.95$, $T=0.3$, domain $[-6,6]^2$, error in the discrete $L_2$ norm against the exact Gaussian. No rate is quoted for the two explicit-cross comparators because at this correlation the joint path leaves their stability region at the last level or two; see the discussion in the text. At $N=120$ the MCS error is $2.37\cdot10^{12}$. The computation does not overflow, and every step remains finite; the scheme is simply unstable on this path, and the solution grows monotonically from a peak amplitude of $0.32$ at the first step to nodal values of order $10^{12}$ at the last, the most negative of which is reported in the bottom row. We quote the number rather than the word \emph{diverges} because the failure is a finite, measurable loss of stability rather than a breakdown of the arithmetic.}
\label{crossConv}%
\end{table}%

\test{4}{ The admissible step region.} \Cref{crossWindow} evaluates the two endpoints of \cref{prop:posadi} on the production meshes at $\rho=0.95$. The three lower endpoints, one per directional factor, are computed as in \cref{ouThresh} from the assembled directional operators. The upper endpoint is the resolved-data window of the central factor, whose leading constraint is nonnegativity of the explicit half $(\mathcal I + \frac{\Delta t}{2}A_{xy})\bm p^{\,n}$. It is therefore a resolution condition: it measures how large a step the mixed stencil tolerates before it overshoots the datum it acts on, and it is a property of that datum as much as of the mesh.

\begin{table}[!htb]
\centering
\begin{tabular}{|c|c|r|r|r|r|l|}
\toprule
\rowcolor[rgb]{ .792,  .929,  .984}
$N$ & $h$ & \multicolumn{1}{c|}{$2\tau_0$} & \multicolumn{1}{c|}{$2\gamma_0$} & \multicolumn{1}{c|}{$2\gamma_r$} & \multicolumn{1}{c|}{$\Theta$} & \multicolumn{1}{c|}{\textbf{nonempty region}} \\
\hline
41 & 0.3000 & 3.115 & 2.477e-01 & 2.067 & 2.121e-03 & none \\
\hline
61 & 0.2000 & 2.574 & 1.196e-01 & 2.118 & 2.235e-02 & none \\
\hline
81 & 0.1500 & 2.096 & 6.731e-02 & 2.141 & 6.593e-02 & none \\
\hline
101 & 0.1200 & 1.720 & 4.280e-02 & 2.155 & 5.805e-02 & backward Euler \\
\hline
121 & 0.1000 & 1.426 & 2.951e-02 & 2.161 & 5.447e-02 & backward Euler \\
\hline
161 & 0.0750 & 1.010 & 1.643e-02 & 2.169 & 5.111e-02 & backward Euler \\
\bottomrule
\end{tabular}
\caption{The endpoints of \cref{prop:posadi} at $\rho=0.95$, one lower endpoint per directional factor. The second-order factors keep an $O(1)$ lower endpoint, while that of backward Euler falls like $h$. The upper endpoint $\Theta$ is a resolution condition on the cross stencil: it \emph{rises} steeply under refinement and then saturates near $5\cdot10^{-2}$, so it is not the parabolic $O(h^2)$ quantity the stencil width suggests. The consequence is a clean split. For the exponential and Pad\'e factors the region is empty on every mesh and refinement does not open it, whereas for backward Euler the endpoints cross between $N=81$ and $N=101$ and the region is nonempty on every mesh thereafter.}
\label{crossWindow}%
\end{table}%

For the second-order factors, the region is empty on every mesh, and this is
structural rather than a defect of the Pad'e map. The two endpoints are
controlled by different mechanisms. The lower endpoint $2\gamma_r$ is an $O(1)$
property of the stationary density of the directional operators and is
essentially mesh-independent, varying only from $2.07$ to $2.17$ across the six
meshes; likewise, $2\tau_0$ decreases only from $3.1$ to $1.0$. In contrast, the
upper endpoint increases from $2.1\cdot10^{-3}$ to $6.6\cdot10^{-2}$ as the mixed
stencil resolves the datum, then levels off near $5\cdot10^{-2}$. It is therefore
a resolution condition that converges to a ratio determined by the shape of the
datum rather than by the mesh. Refinement consequently narrows the gap, but the
endpoints remain separated by a factor of about forty. For these factors,
positivity in this regime is the empirical conclusion recorded in
\cref{crossConv}, together with the fallback conditions of \cref{rem:region}.

Backward Euler is the exception, and it is precisely what gives
\cref{prop:posadi} its content in two dimensions. Its threshold is not an $O(1)$
spectral quantity but a discretization-scale quantity that decreases like $h$.
Thus, $2\gamma_0$ falls below the saturating $\Theta$ between $N=81$ and $N=101$,
and the admissible region $[2\gamma_0,\Theta]$ becomes nonempty on every finer
mesh, widening under further refinement. The composite guarantee is therefore
genuinely available, but only for the first-order variant. This is the Bolley–Crouzeix barrier, \cite{BolleyCrouzeix1978} in its sharpest form for this problem, the parabolic analogue of Godunov's theorem: a composite step can be provably nonnegative or second-order accurate, but not both.

\begin{figure}[!htb]
\begin{center}
\hspace*{-0.3in}
\includegraphics[width=0.75\textwidth]{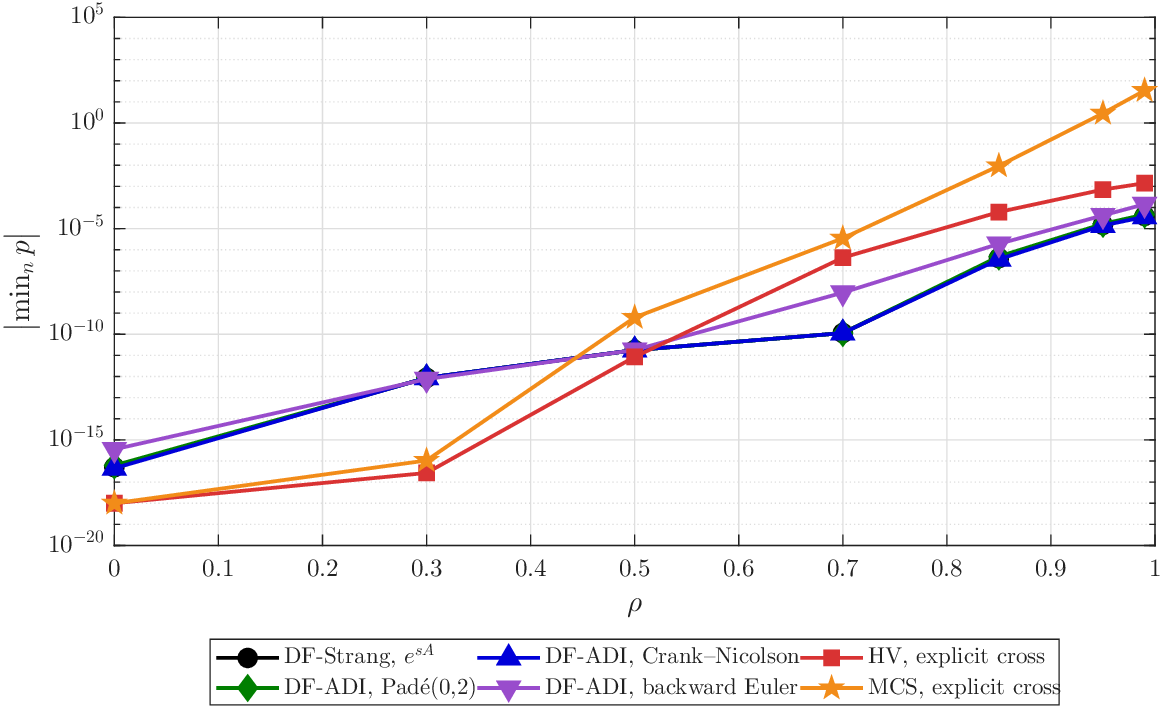}
\end{center}
\caption{Sensitivity of positivity to the strength of the cross-diffusion. Plotted is the most negative nodal value attained at any step of the run, $|\min_n \bm p|$, against the correlation $\rho$ entering $\Sigma_{xy}=\rho\sqrt{\Sigma_{xx}\Sigma_{yy}}$, with everything else held fixed: the mesh $N=88$ in each direction, the step $\Delta t \sim h$ of \cref{crossConv}, and $T=0.3$. Values are floored at $10^{-18}$ for plotting. The three second-order multiplicative propagators remain within $5\cdot10^{-5}$ of nonnegativity up to $\rho = 0.99$, where the first-order backward Euler factor reaches $1.4\cdot10^{-4}$, while the two schemes carrying an explicit cross stencil separate from them beyond $\rho \approx 0.7$ and reach $1.4\cdot10^{-3}$ and $3.4\cdot10^{1}$ respectively. The separation is the explicit predictor of \cref{ssec:relation}, isolated here by varying only the cross term.}
\label{Fstress}
\end{figure}

\subsection{Classical ADI with an implicitly treated mixed derivative} \label{ssec:numrelation}

\Cref{ssec:relation} claims that promoting the mixed block to a third implicit direction preserves the order and the conservation of a stabilizing-correction scheme but not its positivity, the obstruction being the affine stage structure rather than the treatment of the cross term. This subsection measures both halves of that claim. We set $F_0\equiv0$, take $F_1=A_x$, $F_2=A_y$, $F_3=A_{xy}$, and apply $(\mathcal I - \theta\Delta t A_{xy})^{-1}$ by the factorized Picard iteration.

\test{5}{ Order.} \Cref{implOrder} refines the step on a frozen-coefficient problem with $\rho=0.8$, measuring against $e^{T\cdot F}\bm p_0$, so only the time integrator is under test. Two remarks on the setup are needed. First, the mixed factor is applied here by a direct solve rather than by the Picard iteration. The factorized coupling carries a $\Delta t$-independent orientation defect. That defect accumulates over $T/\Delta t$ steps, so at fixed mesh it grows as the step is refined. With the iteration in place every scheme in the table, the multiplicative one included, reports an apparent order near $-0.3$, which measures the defect and not the integrator. Second, the reference is the exponential of the assembled generator, not the analytic density, so the spatial defect cancels identically.

The order claim is confirmed. Douglas, which is only first order when the mixed term is explicit, becomes second order at $\theta=\frac12$ once $F_0$ vanishes, exactly as predicted, and the modified Craig--Sneyd scheme stays second order. The Hundsdorfer--Verwer scheme returns rate $3.05$. This is genuine superconvergence and not a numerical accident: for a step composed of commuting blocks with $F_0\equiv0$ the coefficient of $z^3$ in the stability function vanishes identically at $\theta = \frac12+\frac{\sqrt3}{6}$, which is a root of $\theta^2-\theta+\frac16=0$, and we have verified symbolically that it is nonzero at $\theta=\frac13$ and $\theta=\frac12$. The generic order of the scheme remains two.

The last two rows quantify the remark at the end of \cref{ssec:relation}. Replacing the resolvent in the stages by the Pad\'e$(0,2)$ map costs a full order, since the stage cancellation $(\mathcal I-w)^{-1}(\mathcal I - w) = \mathcal I$ that a stabilizing correction relies on is exact for the resolvent but leaves a defect $-\frac{w^2}{2}$ for $r_{02}$. Adding $\frac{\theta^2\Delta t^2}{2}F_j^2\bm p^{\,n}$ to the stage right-hand side removes that defect and restores the rate. The repair must be applied only to those stages that actually use the Pad\'e map; the mixed stage uses the resolvent and carries no defect, and inserting a repair term there contributes a spurious $O(\Delta t^2)$ per step which pins the observed rate at one.
\begin{table}[!htb]
\centering
\scalebox{0.9}{
\begin{tabular}{|l|r|r|r|r|r|c|}
\toprule
\rowcolor[rgb]{ .792,  .929,  .984}
\multicolumn{1}{|c|}{\textbf{scheme}} & \multicolumn{1}{c|}{$N_t=8$} & \multicolumn{1}{c|}{$16$} & \multicolumn{1}{c|}{$32$} & \multicolumn{1}{c|}{$64$} & \multicolumn{1}{c|}{$128$} & \textbf{rate} \\
\hline
Douglas, $F_0$ explicit, $\theta=\frac12$      & 1.37e-02 & 6.90e-03 & 3.45e-03 & 1.72e-03 & 8.62e-04 & 1.00 \\ \hline
\rowcolor[rgb]{ .557,  .851,  .451}
Douglas, $F_0\equiv0$, $\theta=\frac12$        & 1.33e-03 & 3.32e-04 & 8.26e-05 & 2.06e-05 & 5.14e-06 & 2.00 \\ \hline
HV, $F_0$ explicit                              & 2.37e-03 & 6.53e-04 & 1.71e-04 & 4.39e-05 & 1.11e-05 & 1.98 \\ \hline
HV, $F_0\equiv0$                                & 5.51e-04 & 6.04e-05 & 6.80e-06 & 7.99e-07 & 9.65e-08 & 3.05 \\ \hline
MCS, $F_0\equiv0$                               & 6.08e-04 & 1.51e-04 & 3.75e-05 & 9.33e-06 & 2.33e-06 & 2.00 \\ \hline
HV, $F_0\equiv0$, Pad\'e stages                 & 5.87e-03 & 2.92e-03 & 1.46e-03 & 7.33e-04 & 3.67e-04 & 1.00 \\ \hline
HV, $F_0\equiv0$, Pad\'e stages, repaired       & 5.76e-04 & 6.63e-05 & 7.87e-06 & 9.57e-07 & 1.18e-07 & 3.02 \\ \hline
\rowcolor[rgb]{ .557,  .851,  .451}
DF-ADI \eqref{eq:dfadi}, Pad\'e$(0,2)$            & 5.38e-04 & 1.39e-04 & 3.51e-05 & 8.81e-06 & 2.20e-06 & 2.00 \\
\bottomrule
\end{tabular}}
\caption{Temporal convergence with the mixed block promoted to a third implicit direction, $N=28$, $T=0.2$, frozen coefficients with $\rho=0.8$, error against $e^{T\cdot F}\bm p_0$. Highlighted are the two rows that carry the message of \cref{ssec:relation}: Douglas gains an order when $F_0$ vanishes, and the multiplicative scheme \eqref{eq:dfadi} attains second order without any stage repair.}
\label{implOrder}%
\end{table}%

\test{6}{ Positivity.} \Cref{implPos} reports, alongside the final most negative value, two quantities internal to the stabilizing-correction step: the smallest entry of the explicit predictor $Y_0$, and the smallest entry attained by any stage operand $Y_{j-1}-\theta\Delta t F_j \bm p^{\,n}$ over the run. The second is the quantity \cref{ssec:relation} is about. Making the cross term implicit removes it from the predictor sum, and it does improve the undershoot, by a factor of about $2.6$ in the strongly under-resolved regime III. It does not, however, change the sign structure. The operands remain signed at every stage, so the nonnegativity of the matrix standing in front of them is never used, and no per-factor argument becomes available. The multiplicative scheme \eqref{eq:dfadi} has no analogue of these quantities, since each of its factors acts on the nonnegative cone directly.
\begin{table}[!htb]
\centering
\scalebox{0.9}{
\begin{tabular}{|l|r|r|r|r|}
\toprule
\rowcolor[rgb]{ .792,  .929,  .984}
\multicolumn{1}{|c|}{\textbf{scheme}} & \multicolumn{1}{c|}{$\min_n \bm p$} & \multicolumn{1}{c|}{\textbf{predictor} $\min Y_0$} & \multicolumn{1}{c|}{\textbf{worst operand}} & \multicolumn{1}{c|}{\textbf{mass defect}} \\
\hline
\multicolumn{5}{|l|}{\textit{Regime II, weak cross, resolved} ($N=96$, $N_t=48$)} \\
\hline
\rowcolor[rgb]{ .557,  .851,  .451}
DF-ADI \eqref{eq:dfadi}, Pad\'e$(0,2)$ & -2.67e-02 & \multicolumn{1}{c|}{---} & \multicolumn{1}{c|}{---} & 5.8e-05 \\
\hline
HV, $F_0$ explicit                    & -2.19e-02 & -2.179e-02 & -2.178e-02 & 8.0e-05 \\
\hline
HV, $F_0\equiv0$                      & -2.77e-02 & -2.738e-02 & -2.746e-02 & 5.4e-05 \\
\hline
MCS, $F_0\equiv0$                     & -2.59e-02 & -2.561e-02 & -2.568e-02 & 6.1e-05 \\
\hline
\multicolumn{5}{|l|}{\textit{Regime III, strongly under-resolved} ($N=72$, $N_t=60$)} \\
\hline
\rowcolor[rgb]{ .557,  .851,  .451}
DF-ADI \eqref{eq:dfadi}, Pad\'e$(0,2)$ & -1.28e+00 & \multicolumn{1}{c|}{---} & \multicolumn{1}{c|}{---} & 2.2e-02 \\
\hline
HV, $F_0$ explicit                    & -3.21e+00 & -3.212e+00 & -3.254e+00 & 1.5e-04 \\
\hline
HV, $F_0\equiv0$                      & -1.16e+00 & -1.231e+00 & -1.237e+00 & 2.3e-02 \\
\hline
MCS, $F_0\equiv0$                     & -1.38e+00 & -1.467e+00 & -1.462e+00 & 1.9e-02 \\
\bottomrule
\end{tabular}}
\caption{Positivity diagnostics with the mixed block treated implicitly. The worst operand is the smallest entry attained over the run by any stage operand $Y_{j-1}-\theta\Delta t F_j\bm p^{\,n}$. It stays signed whether or not the cross term is implicit, which is the numerical content of the multiplicative-against-affine distinction of \cref{ssec:relation}. The multiplicative scheme has no such quantity. In this comparison the cross stencil uses the plain one-sided closure shared with the stabilizing-correction comparators. The mass defect of the multiplicative scheme is therefore the $O(\Delta t\,h)$ boundary leakage of \cref{prop:massadi}, not round-off; under the conservative closure it drops to round-off, as in \cref{ouConv} and \cref{kramRate}.}
\label{implPos}%
\end{table}%

\test{7}{ Temporal order of the production step.} The order measurement
of \cref{implOrder} advances the mixed factor by a direct solve, which isolates
the temporal behaviour of the composition but is not how \eqref{eq:dfadi} is run.
\Cref{tempOrder} repeats it in the production configuration, with $\Phi_{xy}$
advanced by the factorized Picard iteration. Coefficients are frozen, so the
assembled generator $F=A_x+A_y+A_{xy}$ is constant and the reference is
$e^{T\cdot F}\bm p^{\,0}$ evaluated by dense exponentiation. The one-sided cross
stencil enters both the scheme and the reference, so the spatial orientation
error cancels identically and what remains is purely temporal.

The two columns behave differently. With the direct mixed solve the scheme is
second order, at $2.00$ over five refinements, which is the temporal order of
\eqref{eq:dfadi} itself. With the Picard solve the error does not merely
stagnate, it grows as $\Delta t$ falls. The factorized iteration does not
converge to $(\mathcal I-sA_{xy})^{-1}\bm b$. It converges, in three to nine
iterations and independently of the tolerance, to the solution of a nearby system
differing from it by $O(h^3)$, a quantity carrying no factor of $\Delta t$.
Accumulated over the $T/\Delta t$ steps of a run, that defect contributes
$O(h^3/\Delta t)$, which must eventually dominate the $O(\Delta t^2)$ truncation
error at fixed mesh. Raising the iteration count does not help, because the
iteration has already converged.

This is not a defect to be repaired but the same trade the rest of the paper is about, appearing inside the mixed solver. The discrepancy originates in the
orientation of the one-sided differences assembling the iteration's right-hand
side, and symmetrizing them does restore second order in the production
configuration. It also degrades positivity by between two and four orders of
magnitude, $\min_n\bm p$ falling from $-5.9\cdot10^{-8}$ to $-2.0\cdot10^{-5}$ at
$\rho=0.95$ on the finest mesh and from $-2.8\cdot10^{-11}$ to $-3.8\cdot10^{-7}$
at $\rho=0.8$, at identical iteration counts and identical mass. The orientation
is doing the same work inside the mixed factor that the one-sided stencil does
outside it, buying nonnegativity at the cost of accuracy.

The practical statement is the one \cref{prop:order} already makes. The scheme is second order under the joint refinement $\Delta t\sim h$, where the $O(h^3)$
defect sits an order below the $O(h^2)$ spatial error and never surfaces, which
is what \cref{crossConv} measures at $2.11$. It is not second order under $\Delta
t\to0$ at fixed mesh, and \eqref{eq:dfadi} should be refined in both arguments
together.
\begin{table}[!htb]
\centering
\scalebox{0.9}{
\begin{tabular}{|c|c|r|c|r|c|}
\toprule
\rowcolor[rgb]{ .792,  .929,  .984}
\multirow{2}{*}{$N_t$} & \multirow{2}{*}{$\Delta t$} &
\multicolumn{2}{c|}{\textbf{Picard mixed solve}} & \multicolumn{2}{c|}{\textbf{direct mixed solve}} \\
\cmidrule{3-6} \rowcolor[rgb]{ .792,  .929,  .984}
 & & \multicolumn{1}{c|}{error} & order & \multicolumn{1}{c|}{error} & order \\
\hline
   4 & 2.500e-02 & 1.343e-03 & ---     & 5.382e-05 & ---  \\
\hline
   8 & 1.250e-02 & 1.791e-03 & $-0.42$ & 1.359e-05 & 1.99 \\
\hline
  16 & 6.250e-03 & 3.123e-03 & $-0.80$ & 3.417e-06 & 1.99 \\
\hline
  32 & 3.125e-03 & 5.762e-03 & $-0.88$ & 8.569e-07 & 2.00 \\
\hline
  64 & 1.563e-03 & 1.037e-02 & $-0.85$ & 2.146e-07 & 2.00 \\
\hline
\rowcolor[rgb]{ .557,  .851,  .451}
 128 & 7.813e-04 & 1.525e-02 & $-0.56$ & 5.368e-08 & 2.00 \\
\bottomrule
\end{tabular}}
\caption{Temporal order of \eqref{eq:dfadi} at fixed mesh $N=64$ in each direction, frozen
coefficients with $\rho=0.8$, $T=0.1$, Pad\'e$(0,2)$ directional factors, error in the discrete
$L_2$ norm against $e^{T\cdot F}\bm p^{\,0}$. The direct mixed solve gives the temporal order of the
composition. The Picard mixed solve carries in addition an $O(h^3)$ per-step defect which does
not scale with $\Delta t$, so its accumulated contribution grows as the step is refined at fixed
mesh. On the coarser mesh $N=48$ the same columns read $3.03\cdot10^{-3}$ to $3.28\cdot10^{-2}$
and $7.70\cdot10^{-5}$ to $7.65\cdot10^{-8}$ respectively, the Picard column scaling with $h$ and
the direct column not.}
\label{tempOrder}%
\end{table}%

\subsection{Kramers escape in a double-well potential} \label{ssec:kramers}

The previous benchmarks run to a fixed short horizon. This one runs long, so that the metastable physics of the equation, rather than the local truncation error, decides whether a scheme is usable. We take the double-well potential $V(x) = \varkappa(x^2-k^2)^2$ with $k=1$, so that $\mu = -V'$, and a small diffusion $D$, which places the problem in the activated-escape regime. The slowest nontrivial eigenvalue of the generator then approaches twice the Kramers rate,
\begin{equation} \label{eq:kramers}
r_K = \frac{\sqrt{V''(x_{\min})\,|V''(x_{\max})|}}{2\pi}\, e^{-\Delta V/D},
\end{equation}
the factor two arising because the symmetric double well relaxes through two equivalent channels.

\test{8}{ Long-horizon positivity.}
\Cref{kramRate} first checks the discretization itself against \eqref{eq:kramers}
as the barrier is raised. The ratio $\lambda_{\mathrm{slow}}/2r_K$ stays within
$10\%$ of unity and approaches one as the barrier grows, after an initial dip at
the lowest barrier, consistent with the finite-barrier corrections omitted by
\eqref{eq:kramers}.

The lower block then integrates $600$ steps to $T=30$ at $\varkappa=1.5$, with a
barrier of $3$, and recovers the escape rate from the decay of the well
population. All five propagators reproduce the reference rate to better than
$0.7\%$. Thus, on this problem the schemes are essentially equivalent in
accuracy, and positivity is the relevant discriminator. The exponential and
backward Euler factors remain nonnegative to round-off throughout the run,
whereas the two trapezoidal factors develop undershoots of order $10^{-2}$ from
the rough initial datum, which persist throughout the metastable phase. The
Pad\'e$(0,2)$ factor lies between them, with a minimum of $-6.8\cdot10^{-5}$. The
step $\Delta t=0.05$ is below the threshold $\gamma_r$ for this operator, so
\cref{thm:pade} provides no guarantee at this step. What remains is the far-band
leak described in \cref{rem:pademetzler}, two orders of magnitude smaller than
the trapezoidal undershoot. The one-sided window therefore behaves as expected,
viewed from below its threshold. \Cref{Fkramers} shows both the decay and the
running minimum over the full run.

Mass deserves a brief qualification. Every propagator in the table conserves mass
to $1\cdot10^{-10}$ or better, with the largest defect occurring for the Pad'e
factor, whose additional $\tfrac{\Delta t^2}{2}L^2$ term introduces some
conditioning sensitivity. This is not a discriminator here: all five schemes use
the conservative closure, for which \cref{prop:massadi} guarantees exact mass
conservation independently of the propagator. The difference becomes visible only
when compared with a closure that lacks this property, such as the plain
one-sided closure of the cross stencil, which leaks at $O(\Delta t,h)$ per step.
\begin{table}[!htb]
\centering
\scalebox{0.9}{
\begin{tabular}{|c|c|r|r|c|}
\toprule
\rowcolor[rgb]{ .792,  .929,  .984}
$\varkappa$ & \textbf{barrier} $\Delta V$ & \multicolumn{1}{c|}{$2 r_K$ \textbf{(Kramers)}} & \multicolumn{1}{c|}{$\lambda_{\mathrm{slow}}$ \textbf{(discrete)}} & \textbf{ratio} \\
\hline
1.0 & 2.00 & 2.4369e-01 & 2.2741e-01 & 0.933 \\
\hline
1.5 & 3.00 & 1.3447e-01 & 1.2121e-01 & 0.901 \\
\hline
2.0 & 4.00 & 6.5959e-02 & 5.9905e-02 & 0.908 \\
\hline
2.5 & 5.00 & 3.0331e-02 & 2.8083e-02 & 0.926 \\
\hline
\rowcolor[rgb]{ .557,  .851,  .451}
3.0 & 6.00 & 1.3390e-02 & 1.2674e-02 & 0.947 \\
\bottomrule
\end{tabular}}

\vspace{\baselineskip}

\scalebox{0.9}{
\begin{tabular}{|l|r|c|r|r|}
\toprule
\rowcolor[rgb]{ .792,  .929,  .984}
\multicolumn{1}{|c|}{\textbf{scheme}} & \multicolumn{1}{c|}{\textbf{recovered rate}} & \textbf{rate}$/\lambda_{\mathrm{ref}}$ & \multicolumn{1}{c|}{$\min_n \bm p$} & \multicolumn{1}{c|}{\textbf{mass drift}} \\
\hline
\rowcolor[rgb]{ .557,  .851,  .451}
DF-Strang, exponential  & 1.2282e-01 & 1.006 & 2.82e-64 & 1.14e-12 \\
\hline
\rowcolor[rgb]{ .557,  .851,  .451}
DF-ADI, Pad\'e$(0,2)$   & 1.2282e-01 & 1.006 & -6.76e-05 & 9.25e-11 \\
\hline
DF-ADI, Crank--Nicolson & 1.2283e-01 & 1.006 & -6.50e-03 & 6.86e-13 \\
\hline
DF-ADI, Backward Euler  & 1.2245e-01 & 1.003 & 2.82e-64 & 4.26e-13 \\
\hline
centred Crank--Nicolson & 1.2145e-01 & 0.995 & -9.22e-03 & 1.90e-12 \\
\bottomrule
\end{tabular}}
\caption{Kramers escape in the double well $V=\varkappa(x^2-k^2)^2$, $k=1$. Upper block: the slowest nontrivial eigenvalue of the discrete generator against twice the Kramers rate \eqref{eq:kramers} as the barrier is raised. Lower block: a long run at $\varkappa=1.5$, $T=30$, $\Delta t=0.05$, $600$ steps, with $\lambda_{\mathrm{ref}}=1.2204\cdot10^{-1}$ the eigenvalue of the same discrete generator. All schemes recover the rate to better than $0.7\%$, so the discriminator is the sign of the solution. The row labelled centred Crank--Nicolson is the standard second-order centred-difference spatial discretization advanced by the trapezoidal rule. It is a naive baseline that does not use the DF operators, and is distinct from the DF-ADI, Crank--Nicolson row above, which does.}
\label{kramRate}%
\end{table}%
\begin{figure}[!htb]
\begin{center}
\includegraphics[width=0.98\textwidth]{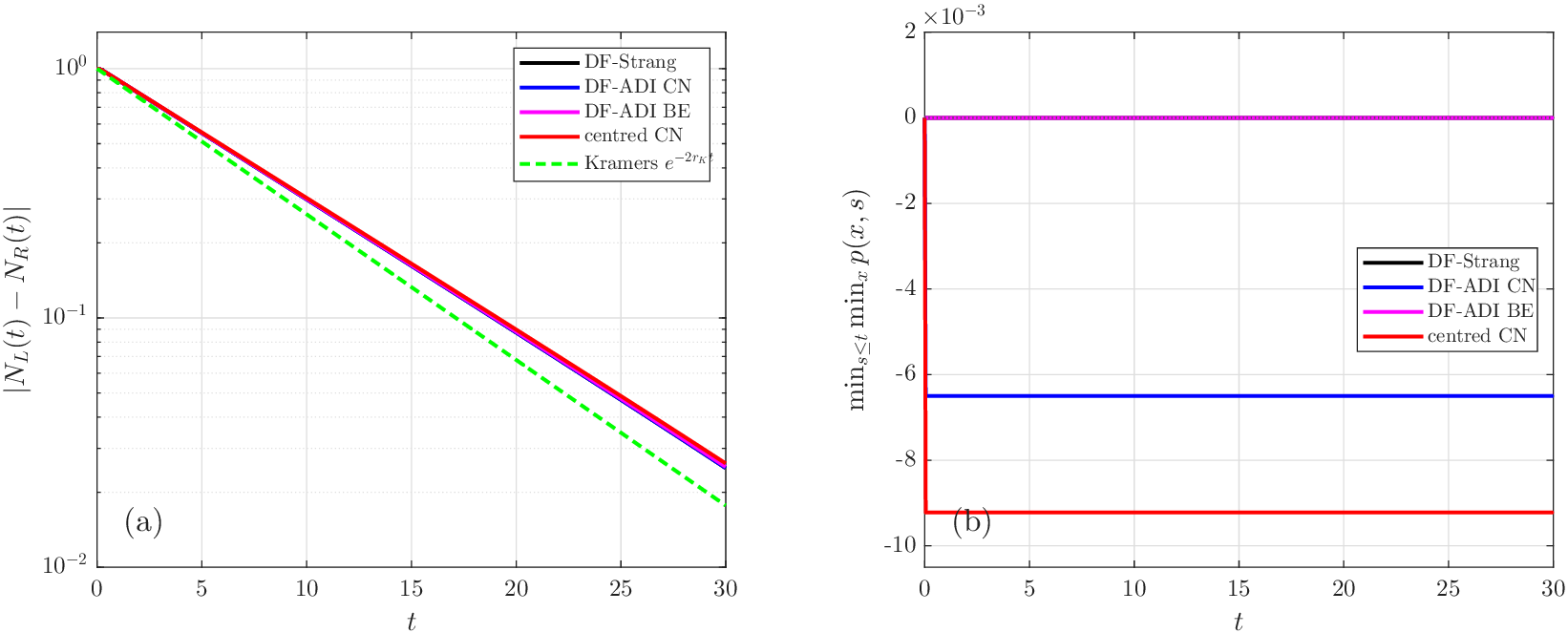}
\end{center}
\caption{Long-time behavior in the double-well potential of \cref{kramRate}, with
$\varkappa=1.5$, $T=30$, and $\Delta t=0.05$. Panel (a): decay of the
inter-well population imbalance $|N_L(t)-N_R(t)|$, where $N_L$ and $N_R$ are
the masses in the left and right wells, compared with the Kramers prediction
$e^{-2r_Kt}$ from \eqref{eq:kramers} (dashed). All five propagators follow the
predicted decay. Panel (b): depth of the undershoot, $\max\bigl(0,-\min_{s\le t}\min_x p(x,s)\bigr)$, shown on a logarithmic axis with a plotting floor at
$10^{-9}$; the four orders of magnitude separating the schemes would be invisible
on a linear scale. The exponential and backward Euler factors remain nonnegative
and therefore stay at the floor, the Pad'e$(0,2)$ factor reaches
$6.8\cdot10^{-5}$, while the two trapezoidal factors reach about $10^{-2}$ during
the initial transient and do not recover. Mass is not shown because all schemes
use the conservative closure, which ensures exact conservation by
\cref{prop:massadi}; the corresponding panel would therefore provide no
discrimination.}
\label{Fkramers}
\end{figure}

\subsection{Advection-dominated regime} \label{ssec:peclet}

The next benchmark increases the cell P'eclet number $\mathrm{Pe}*i=|\mu_i|h/D$
and examines how the directional factors behave as the mesh ceases to resolve the
drift. We fix the Ornstein--Uhlenbeck problem \eqref{eq:ou} with $\sigma=0.5$ on
$n=161$ nodes and vary $\kappa$, so that $\mathrm{Pe}*{\max}$ ranges from $1.2$
to $10$, while integrating a resolved bump to $T=0.25$. Unlike the other
benchmarks, this sweep advances the interior block of the operator with an
absorbing (Dirichlet) closure; the bump remains well away from the boundary, so
the experiment isolates the behavior of the directional factor in the
drift-dominated interior rather than boundary effects. The thresholds reported in
\cref{ouThresh} are those of the conservative operator and are used here only to
set the scale of $\Delta t$.
\begin{table}[!htb]
\centering
\scalebox{0.9}{
\begin{tabular}{|c|c|r|r|r|r|r|}
\toprule
\rowcolor[rgb]{ .792,  .929,  .984}
\multirow{2}{*}{$\kappa$} & \multirow{2}{*}{$\mathrm{Pe}_{\max}$} &
\multicolumn{3}{c|}{\textbf{one-sided window,} $r(\infty)=0$} &
\multicolumn{2}{c|}{\textbf{trapezoidal,} $r(\infty)=-1$} \\
\cmidrule{3-7}
\rowcolor[rgb]{ .792,  .929,  .984}
 & & exponential & Pad\'e$(0,2)$ & Backward Euler & DF-ADI, Crank--Nicolson & centred Crank--Nicolson \\
\hline
 2 &  1.2 & -7.35e-05 & -6.46e-03 & 0.00e+00 &  0.00e+00 &  0.00e+00 \\
\hline
 4 &  2.5 & -3.88e-03 & -3.82e-02 & 0.00e+00 & -1.90e-06 & -6.03e-41 \\
\hline
 6 &  3.8 & -1.89e-02 & -8.88e-02 & 0.00e+00 & -6.81e-06 & -2.66e-05 \\
\hline
 9 &  5.6 & -5.12e-02 & -1.57e-01 & 0.00e+00 & -9.15e-04 & -2.22e-01 \\
\hline
12 &  7.5 & -1.06e-01 & -2.05e-01 & 0.00e+00 & -4.30e-01 & -7.85e-01 \\
\hline
\rowcolor[rgb]{ .557,  .851,  .451}
16 & 10.0 & -1.75e-01 & -2.61e-01 & 0.00e+00 & -1.27e+00 & -1.69e+00 \\
\bottomrule
\end{tabular}}
\caption{P\'eclet sweep on \eqref{eq:ou} with $\sigma=0.5$, $n=161$, $T=0.25$, $\Delta t = 0.01$, reporting $\min_n \bm p$, the most negative nodal value
attained at any step of the run. The step is below the thresholds $2\tau_0$ and
$2\gamma_r$ of \cref{ouThresh}, so the exponential and Pad\'e columns show the
sub-threshold transient of \cref{rem:zenomirror} rather than ringing. Entries of
magnitude $10^{-41}$ in the trapezoidal columns are round-off, not genuine
undershoots.}
\label{peSweep}%
\end{table}

\begin{table}[!htb]
\centering
\scalebox{0.9}{
\begin{tabular}{|l|c|r|r|c|}
\toprule
\rowcolor[rgb]{ .792,  .929,  .984}
\textbf{scheme} &
\multicolumn{1}{c|}{\textbf{stiff}} &
\multicolumn{1}{c|}{$\min_{n}\bm p$} &
\multicolumn{1}{c|}{$\min_n \bm p$} &
\multicolumn{1}{c|}{\textbf{negative}} \\
\rowcolor[rgb]{ .792,  .929,  .984}
 &
\multicolumn{1}{c|}{\textbf{damping} $r(\infty)$} &
\multicolumn{1}{c|}{\textbf{over the run}} &
\multicolumn{1}{c|}{\textbf{at} $t=T$} &
\multicolumn{1}{c|}{\textbf{nodes at} $t=T$} \\
\hline
DF-ADI, Pad\'e$(0,2)$    & 0  & -2.34e-03 & -5.75e-06 & 30 \\
\hline
DF-ADI, Backward Euler   & 0  &  0.00e+00 &  0.00e+00 & 0 \\
\hline
DF-Strang, exponential   & 0  & -2.59e-15 &  0.00e+00 & 0 \\
\hline
\rowcolor[rgb]{ .557,  .851,  .451}
DF-ADI, Crank--Nicolson  & -1 & -6.13e+00 &  0.00e+00 & 0 \\
\bottomrule
\end{tabular}}
\caption{Stiff ringing test: pure diffusion, so the operator is Metzler and no
eventual-positivity threshold is in play, started from a grid-scale spike and
advanced with $\frac{\Delta t}{2}\max_i|L_{ii}| = 2.6$. The last two columns are
instantaneous quantities at the final time and must be read together with the
third. Crank--Nicolson attains by far the worst undershoot \emph{during} the run,
$-6.13$, but the reflected spike has diffused away by $t=T$, so it leaves no
negative nodes behind. Pad\'e$(0,2)$ never rings, yet retains $30$ weakly
negative nodes at $t=T$ of size $10^{-6}$: this is the persistent far-band leak
of \cref{rem:pademetzler}, not reflection. The two effects are therefore
distinguished by \emph{when} they occur, and a count taken only at the final time
would rank the schemes backwards.}
\label{stiffRing}%
\end{table}

\enlargethispage{1em}
\begin{figure}[!htbp]  
\centering
\hspace*{-0.3in}
\includegraphics[width=0.7\textwidth]{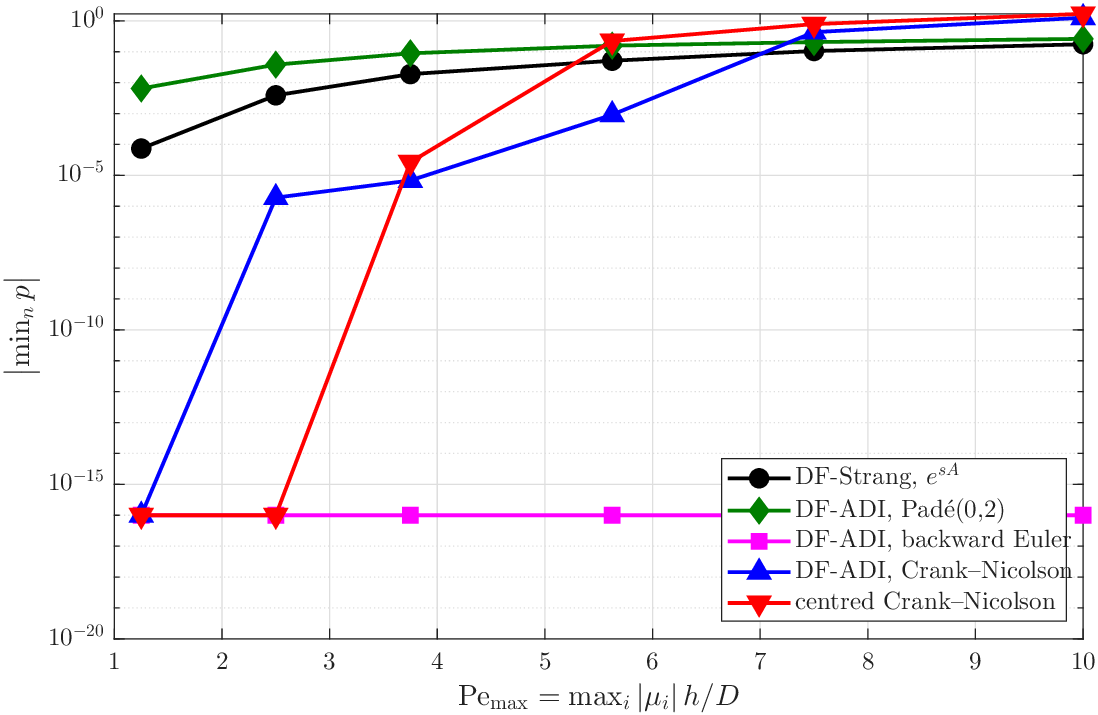}
\vspace{-0.5em}  
\caption{Most negative nodal value attained over the run, $|\min_n \bm p|$, against the maximal cell P\'eclet number $\mathrm{Pe}_{\max}=\max_i|\mu_i|h/D$, the largest ratio of advective to diffusive transport across a single cell on the grid. Data as in \cref{peSweep}: $n=161$ nodes, $\Delta t = 0.01$, $T=0.25$, with $\kappa$ swept to move $\mathrm{Pe}_{\max}$. Values are floored at $10^{-16}$ for plotting. Beyond $\mathrm{Pe}_{\max}\approx5$, where the mesh ceases to resolve the drift, the two trapezoidal realizations separate from the three factors with $r(\infty)=0$ and deteriorate rapidly. The residual undershoot of the exponential and Pad\'e curves at small $\mathrm{Pe}_{\max}$ is the sub-threshold transient of \cref{rem:zenomirror}, since $\Delta t$ here lies below $2\tau_0$ and $2\gamma_r$; it is not ringing.}
\label{Fpeclet}
\end{figure}

\test{9}{ Ringing versus the sub-threshold transient.}
Two distinct phenomena must be separated here. \Cref{peSweep,stiffRing} are
designed to do so, with \cref{Fpeclet} showing the sweep graphically. The first
is temporal ringing, caused by reflection of stiff modes when $r(\infty)<0$; this
is precisely what the L-stability of Pad'e$(0,2)$ is designed to avoid. The
second is the sub-threshold transient of \cref{rem:zenomirror}, in which an
eventually positive factor fails below its own threshold.

In the sweep, $\Delta t=0.01$ is far below the thresholds $2\tau_0$ and
$2\gamma_r$ of \cref{ouThresh}, both of which are $O(0.6)$. The exponential and
Pad'e columns therefore show the sub-threshold transient, which grows with the
P'eclet number, whereas backward Euler remains clean because its threshold
$2\gamma_0$ is two orders of magnitude smaller. None of this is ringing. Ringing
appears only in the last two columns: the trapezoidal factors become negative
already at $\mathrm{Pe}*{\max}=2.5$, at the $10^{-6}$ level, and deteriorate
rapidly beyond $\mathrm{Pe}*{\max}\approx5$. The centred realization becomes
worse than the upwind one from $\mathrm{Pe}_{\max}\approx3.8$ onward.

To isolate ringing, \cref{stiffRing} uses pure diffusion, so the operator is
Metzler and no eventual-positivity threshold is involved. Starting from a
grid-scale spike, we take $\frac{\Delta t}{2}\max_i|L_{ii}|=2.6$, deep in the
stiff regime. The trapezoidal factor reflects the spike and undershoots to
$-6.13$. The exponential and backward Euler factors remain nonnegative to
round-off, while Pad'e$(0,2)$, which also has $r(\infty)=0$, avoids ringing but
still undershoots by $-2.34\cdot10^{-3}$. This residual is neither ringing nor an
eventual-positivity transient: the latter is absent for a Metzler operator. It is
the positivity restriction inherent to second-order rational approximations of
the exponential, even for an M-matrix, and is three orders of magnitude smaller
than the trapezoidal reflection. Backward Euler avoids it only by sacrificing
first-order accuracy. This quantifies the distinction emphasized in
\cref{rem:whyexp,cor:price}: for Crank--Nicolson, the problem is not second-order
accuracy itself but the negative value of $r(\infty)$.

\subsection{One-dimensional jump-diffusion} \label{ssec:numjumps}

The last experiment isolates the jump factor in \eqref{eq:dfadi-jump}. In one
dimension the mixed block is absent, so the composition reduces to
$\Phi_D(\tfrac{\Delta t}{2}),\Phi_J(\Delta t),\Phi_D(\tfrac{\Delta t}{2})$. We
use the Ornstein--Uhlenbeck drift and diffusion of \eqref{eq:ou} with
$\kappa=\sigma=1$, intensity $\lambda=3$, and two standard kernels: Merton, with
$\mu_J=-0.10$, $\sigma_J=0.35$, and Kou, with $q=0.4$, $\eta_1=6$, $\eta_2=8$.

The jump generator is assembled as $J=\lambda(F-\mathcal I)$, where
$F_{ij}=h,\nu(x_i-x_j)$ and the columns are rescaled to sum to one. Thus $F\ge0$
and $\bm1^\top J=0$ exactly. This gives the conservative kernel truncation and
also fixes the sign convention: the generator must be $\lambda(F-\mathcal I)$,
not $\lambda(F+\mathcal I)$.

\test{10}{ The jump factor.} The upper block of \cref{jumpTab} isolates
the jump factor. Since $J$ is Metzler with zero column sums, it is the negative
of an M-matrix; hence $e^{sJ}\ge0$ and $(\mathcal I-sJ)^{-1}\ge0$ for every
$s>0$. Both thresholds therefore vanish, and \cref{cor:rinf} applies directly.

Pad'e$(0,2)$ is different. As noted in \cref{rem:pademetzler}, a Metzler
generator alone does not guarantee positivity of the Pad'e map. Here the
threshold is not only positive but large: $\gamma_r=11.4$ for Merton and $48.5$
for Kou, roughly an order of magnitude above the directional thresholds of
\cref{ouThresh}. The resulting negative entries are small, $-8.5\cdot10^{-4}$ and
$-1.2\cdot10^{-4}$, respectively, and are invisible on resolved data; the
composite runs remain nonnegative to round-off.

The practical conclusion is nevertheless clear. The backward Euler resolvent is
unconditionally positive for $J$ but first order, while Pad'e$(0,2)$ is second
order but requires a threshold that realistic steps may not satisfy. The
exponential is the only factor that is both second order and unconditionally
positive. It is also inexpensive for these jump generators, since
\begin{equation}
e^{sJ}=e^{-s\lambda}\sum_k\frac{(s\lambda)^k}{k!}F^k
\end{equation}
is a Poisson series that can be truncated after only a few terms. Thus, for the
jump block, the exponential is the natural choice—the opposite of the preference
for the directional blocks that motivated this paper.

The lower block confirms the composition accuracy. All three second-order factors
reproduce the exact-in-time reference at rate $2$, while backward Euler converges
at rate $1$. Discrete mass is conserved to round-off by every propagator, as
required by \cref{prop:massadi}, since $\bm1^\top J=0$ together with $\bm1^\top
L_\alpha=0$ satisfies its hypothesis. For the Merton case with the drift switched
off, comparison with the analytic Poisson mixture of Gaussians gives spatial
convergence rates of $2.00$, $2.00$, and $1.98$ on $n=101$ to $801$.
\begin{table}[htbp]
\centering
\scalebox{1.}{
\begin{tabular}{|l|c|c|r|r|}
\toprule
\rowcolor[rgb]{ .792,  .929,  .984}
\textbf{kernel} & $\tau_0$ & $\gamma_0$ & \multicolumn{1}{c|}{$\gamma_r$} &
\multicolumn{1}{c|}{\textbf{worst Pad\'e entry}} \\
\hline
Merton & 0 & 0 & 1.143e+01 & -8.48e-04 \\
\hline
\rowcolor[rgb]{ .557,  .851,  .451}
Kou    & 0 & 0 & 4.845e+01 & -1.21e-04 \\
\bottomrule
\end{tabular}}

\vspace{\baselineskip}

\scalebox{0.95}{
\begin{tabular}{|l|c|c|c|c|}
\toprule
\rowcolor[rgb]{ .792,  .929,  .984}
\textbf{kernel / propagator} & \textbf{exponential} & \textbf{Pad\'e$(0,2)$} &
\textbf{Backward Euler} & \textbf{Crank--Nicolson} \\
\hline
Merton, observed temporal rate & 2.00 & 1.99 & 1.00 & 2.00 \\
\hline
Merton, $\min_n \bm p$         & 1.5e-79 & -7.1e-16 & 1.5e-79 & 1.5e-79 \\
\hline
Merton, mass defect            & 1.7e-14 & 3.7e-14 & 3.1e-15 & 8.1e-15 \\
\hline
Kou, observed temporal rate    & 2.00 & 1.99 & 1.00 & 2.00 \\
\hline
Kou, $\min_n \bm p$            & 1.5e-79 & 1.5e-79 & 1.5e-79 & 1.5e-79 \\
\hline
\rowcolor[rgb]{ .557,  .851,  .451}
Kou, mass defect               & 1.4e-14 & 1.8e-15 & 6.2e-15 & 1.1e-14 \\
\bottomrule
\end{tabular}}
\caption{One-dimensional jump-diffusion, $\lambda=3$, $n=201$, $T=0.5$. Upper
block: thresholds of the jump factor alone. The generator $J=\lambda(F-\mathcal
I)$ is Metzler with $\bm1^\top J=0$, hence the negative of an M-matrix, so
$\tau_0=\gamma_0=0$ exactly. The threshold $\gamma_r$ is not zero, which is
\cref{rem:pademetzler} on a dense generator, and it is an order of magnitude
larger than the directional thresholds of \cref{ouThresh}. Lower block: the
composite $\Phi_D(\tfrac{\Delta t}{2})\Phi_J(\Delta t)\Phi_D(\tfrac{\Delta
t}{2})$ against the exact-in-time reference $e^{T\cdot (L_D+J)}\bm p_0$, refining $N_t$
from $10$ to $160$. Rates are taken at the finest pair, and $\min_n\bm p$ and the
mass defect at $N_t=160$.}
\label{jumpTab}%
\end{table}%

\myparagraph{The two-dimensional composition.}
It remains to check that the jump factor behaves correctly when inserted into the
full step \eqref{eq:dfadi-jump}, where it is adjacent to the mixed factor. These
are the two non-directional factors, so any interaction between them would be the
main new phenomenon introduced by the two-dimensional composition. None is
observed.

On the strong cross-diffusion benchmark with $\lambda=3$ and a product Merton
kernel, the jumps in the two coordinates are independent, so the interaction
being tested is with $A_{xy}$ rather than correlation within the jump measure.
The assembled step converges at rates $2.00$ for the exponential and Pad'e
directional factors and $1.00$ for backward Euler, exactly as in the no-jump
case. The mixed solve is likewise unaffected: the average number of Picard
iterations remains $6.00$, $5.00$, and $5.00$ on $N=48,72,96$, respectively, with
or without the jump factor.

Positivity slightly improves in the presence of jumps. At $N=72$, the minimum
changes from $-7.8\cdot10^{-8}$ to $-4.6\cdot10^{-11}$, and at $N=96$ from
$-1.5\cdot10^{-10}$ to $-3.8\cdot10^{-13}$, consistent with the smoothing effect
of convolution by the jump kernel. The two-dimensional jump generator also gives
the same threshold structure as in one dimension: $\tau_0=\gamma_0=0$ and
$\gamma_r=1.5\cdot10^{1}$. Thus, the extension to two dimensions introduces no
new interaction or positivity restriction, and we record the result without a
separate table.

\section{Discussion and Conclusions} \label{sec:conclusion}

This paper introduced a positivity-preserving alternating direction implicit (ADI) scheme for anisotropic Fokker--Planck equations, obtained by replacing the Krylov-computed matrix exponentials of the companion Diagonal Frog (DF) scheme \cite{ItkinDF2026} with rational maps of the discrete generators. The substitution changes the computational cost per step from $O(m^2N + m^3)$ to $O(N)$, with $m$ the Krylov dimension, while preserving the spatial discretization, the conservative closure, the implicit mixed factor with its factorized Picard solver, and the symmetric composite structure of the original step.

The central theoretical result that enables this replacement is the $r(\infty)$ criterion. For the class of EM generators produced by the DF discretization, the entrywise sign of a rational map $r(\gamma L)$ in the limit of large step sizes is governed by the single number $r(\infty) = \lim_{|z|\to\infty} r(z)$. Maps with $0 \le r(\infty) < 1$ possess a genuine one-sided positivity window $\gamma \ge \gamma_r$, a mirror of the eventual-positivity threshold $\tau_0$ of the exponential. Maps with $r(\infty) < 0$, most notably the trapezoidal (Crank--Nicolson) map with $r(\infty) = -1$, admit at most a bounded two-sided window. Our numerical experiments find this window to be empty on every advection-dominated mesh tested, and, in the one weakly eventually positive case where it does open, too narrow by orders of magnitude to be usable. This explains why Crank--Nicolson cannot carry a per-factor positivity guarantee despite remaining positive in practice on well-resolved meshes. The exponential itself corresponds to the case $r(\infty)=0$, and the subdiagonal Pad\'e$(0,2)$ approximant, which is second-order, L-stable and also satisfies $r(\infty)=0$, inherits the same one-sided window structure. The scheme built on it, DF-ADI, is therefore the first ADI scheme for this class of equations whose directional factors are individually and provably nonnegative above an explicit, computable threshold.

A second structural contribution is the clarification of the relationship to classical stabilizing-correction ADI schemes (Douglas, MCS, Hundsdorfer--Verwer). We have shown that the $r(\infty)$ criterion applies to steps that are multiplicative in the generators, as is the DF-ADI step. The affine stage structure of the stabilizing-correction family, which adds and subtracts stage vectors, admits no per-factor sign control regardless of whether the mixed derivative is treated implicitly or explicitly. Promoting the cross term to a third implicit direction inside an HV or MCS scheme preserves second-order accuracy and exact mass conservation but does not alter the signed operands at each stage, and no positivity guarantee becomes available. The dividing line is therefore multiplicative against affine, and the DF-ADI scheme sits on the multiplicative side with the per-factor guarantees that entails.

The numerical experiments confirm the theoretical picture. On the Ornstein--Uhlenbeck benchmark the Pad\'e$(0,2)$ factor converges at rate $1.98$ in time, matching the trapezoidal factor and confirming that the second-order accuracy of the flagship is not paid for anywhere. The admissible step region $[2\gamma_r, \Theta]$ has the same shape as the exponential scheme's $[2\tau_0, \Theta]$, with $\gamma_r$ of the same order as $\tau_0$ and, unlike the backward Euler threshold $\gamma_0$, not shrinking under mesh refinement. On the anisotropic Gaussian benchmark at strong cross-diffusion ($\rho=0.95$) the three multiplicative second-order propagators are indistinguishable to three digits at every refinement level, converge at rate $2.11$, and undershoot by $5\cdot10^{-8}$, six orders of magnitude below the peak density, and backward Euler by $6\cdot10^{-7}$. The stabilizing-correction comparators, which treat the cross term explicitly, leave their stability region on the same joint-refinement path and undershoot by $4.8\cdot10^{-2}$ and by $2.3\cdot10^{12}$ respectively. The wall-time advantage of the ADI over the exponential DF scheme grows with the mesh, from a factor of $10$ at $N=48$ per direction to $32$ at $N=240$, because the Krylov dimension of the exponential grows with the mesh while the banded cost of the ADI does not.

The scheme extends by transpose duality to the backward Kolmogorov equation, where the positivity windows transfer unchanged, and to jump-diffusion models through the addition of a nonnegative jump factor. The resulting architecture offers a complete, second-order, provably conservative and conditionally positivity-preserving solver for the Fokker--Planck and backward Kolmogorov equations on tensor grids.

The positivity mechanism here differs in kind from the nonlinear limiters used for hyperbolic problems. Flux-corrected transport and ENO/WENO schemes enforce nonnegativity by making the spatial operator solution dependent, clipping or reconstructing fluxes at each step. The DF-ADI factors are linear and fixed. Nonnegativity is a spectral property of the rational map applied to a fixed generator, certified once through the $r(\infty)$ criterion, rather than imposed at run time. The two approaches are complementary: a limiter can be layered on for the under-resolved regime where the guaranteed window is empty, at the cost of the exact linearity, though not of the mass conservation of \cref{prop:massadi}, which follows from the conservative flux form and survives a flux-form limiter unchanged.

The limiter envisaged in the preceding paragraph is developed in \cite{ItkinFCDF2026}, the third paper of this series, where positivity is secured through the sole recourse that Godunov's theorem leaves open, a nonlinear discretization. The second-order directional operator is separated into a monotone M-matrix core and an antidiffusive flux correction, and a Zalesak-type limiter is applied within the implicit banded solve. The resulting Flux-Corrected DF scheme is unconditionally positive at every step size and second-order wherever the density is resolved. Because the limiter acts on the interface fluxes rather than on nodal values, exact discrete mass conservation is retained for every value of the limiter. The scheme carries no lower step threshold, and it therefore governs precisely the small-step regime in which the linear windows of the present paper and of \cite{ItkinDF2026} lose positivity.

The two constructions are complementary across the range of step sizes. The resolvent and Pad\'e$(0,2)$ windows established here govern the large steps, the flux-limited scheme governs the small steps, and \cite{ItkinFCDF2026} shows that in one dimension their combination yields a positive and exactly conservative realization for every step size. The same pairing is expected to close the anisotropic case. A flux-limited directional factor is positive without a lower step bound, and it therefore replaces the interval $[2\gamma_r,\Theta]$ of \cref{prop:posadi} with one that extends to arbitrarily small steps. The admissible region of \cref{crossWindow} is empty only because its lower endpoint $2\gamma_r$ exceeds the parabolic upper endpoint $\Theta$. Once the limiter is transported into the directional factors of the mixed-derivative scheme, that obstruction is removed, and the region is expected to become nonempty.

Several directions remain open. The $r(\infty)$ criterion is stated here for the class of EM generators produced by the DF discretization. Extending it to broader classes of eventually positive or eventually nonnegative generators, including those arising from non-conservative boundary closures or non-divergence-form discretizations, would widen its applicability. The relationship between the order of the Pad\'e approximant and the sharpness of the positivity threshold is not yet understood quantitatively, and the a priori bounds we provide for $\gamma_r$ are conservative. Tighter spectral estimates would make the criterion a practical design tool at run time. Finally, the multiplicative structure of the DF-ADI step makes it a natural candidate for adaptive step-size control and for extension to three spatial dimensions. The $r(\infty)$ criterion itself carries over to three dimensions without change, since the directional factors remain one-dimensional maps of one-dimensional generators; only the mixed-block solver requires a further level of factorization of the factorized Picard iteration (but see \cite{Itkin3D,Itkin2018Stochastic} as an example).

\section*{Disclosure statement}

No potential conflict of interest was reported by the authors.

\section*{Funding}

No funding was received.

\section*{Disclaimer}

Opinions expressed here are author's own, and do not represent views of their employers. A standard disclaimer applies.


\printbibliography

\appendix
\label{app:proofs}
\appendixpage
\numberwithin{equation}{section}
\setcounter{equation}{0}

\section{Proofs of various theorems}

Throughout this appendix $L\in\R^{n\times n}$ denotes the generator fixed in
\cref{sec:resolvent}. It is irreducible, has zero column sums, spectral abscissa
$0$ with the eigenvalue $0$ simple, and is eventually exponentially positive. We
write $\bm1$ for the all-ones vector and $\bm p^\infty>0$ for the stationary
density, normalized by $\bm1^\top\bm p^\infty=1$, so that $L\bm p^\infty=0$ and
$\bm1^\top L=0$. The spectral projector onto $\ker L$ along $\operatorname{ran}L$
is the rank-one matrix $\Pi=\bm p^\infty\bm1^\top$, with entries $\Pi_{ij}=p^\infty_i$,
and it satisfies $\Pi^2=\Pi$ and $L\Pi=\Pi L=0$.

Because $0$ is simple and the abscissa is $0$, every other eigenvalue of $L$ has
strictly negative real part. Define the spectral gap
\begin{equation}\label{eq:gap}
\delta:=\min\{-\Ree\lambda:\lambda\in\operatorname{spec}(L),\ \lambda\ne0\}>0,
\end{equation}
and let $L_0$ be the restriction of $L$ to its invariant subspace
$\operatorname{ran}L=\operatorname{ran}(\mathcal I-\Pi)$. Then $L_0$ is invertible
and $\operatorname{spec}(L_0)=\operatorname{spec}(L)\setminus\{0\}\subset\{\Ree z\le-\delta\}$.
The subspaces $\ker L$ and $\operatorname{ran}L$ are complementary and
$L$-invariant, so for any function $f$ analytic on a neighbourhood of
$\operatorname{spec}(L)$ the primary matrix function $f(L)$ respects the splitting,
\begin{equation}\label{eq:blocksplit}
f(L)\,\Pi=f(0)\,\Pi,\qquad f(L)\,(\mathcal I-\Pi)=f(L_0)\,(\mathcal I-\Pi),
\end{equation}
where $f(L_0)$ is understood to act on $\operatorname{ran}L$. The proofs below use
\eqref{eq:blocksplit} with $f$ the resolvent and with $f$ the stability function.

\subsection{Proof of \cref{thm:resolvent}} \label{app:resolvent}

\begin{proof}
\emph{The window \eqref{eq:reswindow}.} Fix $\gamma>0$ and put $s=1/\gamma$. From
$\mathcal I-\gamma L=s^{-1}(s\mathcal I-L)$ we obtain
$(\mathcal I-\gamma L)^{-1}=s\,(s\mathcal I-L)^{-1}$. Apply \eqref{eq:blocksplit} to
the resolvent $f(z)=(s-z)^{-1}$, which is analytic at every $z\ne s$ and hence on
$\operatorname{spec}(L)$ for $s>0$. On $\ker L$ it gives
$(s\mathcal I-L)^{-1}\Pi=s^{-1}\Pi$. On $\operatorname{ran}L$ it gives the reduced
resolvent
\begin{equation}\label{eq:redres}
R(s):=(s\mathcal I-L)^{-1}(\mathcal I-\Pi)=(s\mathcal I-L_0)^{-1}(\mathcal I-\Pi).
\end{equation}
The map $s\mapsto(s\mathcal I-L_0)^{-1}$ is analytic wherever
$s\notin\operatorname{spec}(L_0)$. By \eqref{eq:gap} the distance from $0$ to
$\operatorname{spec}(L_0)$ is at least $\delta$, so this map, and therefore $R$, is
analytic on the disc $|s|<\delta$ and continuous on the compact segment
$[0,\delta/2]$. Consequently
\begin{equation}\label{eq:Cstar}
C_\star:=\sup_{s\in[0,\delta/2]}\|R(s)\|_{\max}<\infty.
\end{equation}
This is the point at which the spectral gap enters. The only singularity of
$(s\mathcal I-L)^{-1}$ at $s=0$ is the rank-one term $s^{-1}\Pi$, while the
complementary part $R(s)$ stays bounded because $L_0$ is invertible with spectrum
at distance at least $\delta$ from the origin.

Assembling the two pieces,
\begin{equation}\label{eq:resassemble}
(\mathcal I-\gamma L)^{-1}=s\bigl(s^{-1}\Pi+R(s)\bigr)=\Pi+s\,R(s),
\end{equation}
so entrywise $[(\mathcal I-\gamma L)^{-1}]_{ij}=p^\infty_i+s\,R_{ij}(s)$. For
$s\le\delta/2$ the bound \eqref{eq:Cstar} gives $|s\,R_{ij}(s)|\le sC_\star$, whence
\begin{equation}\label{eq:resbound-proof}
[(\mathcal I-\gamma L)^{-1}]_{ij}\ \ge\ \min_kp^\infty_k-sC_\star.
\end{equation}
The right-hand side is strictly positive once $s<\min_kp^\infty_k/C_\star$. Both
conditions on $s$ hold as soon as
$\gamma\ge\gamma_0^{\mathrm{bd}}$ of \eqref{eq:resolvent-bound}, which is
finite; hence $\gamma_0\le\gamma_0^{\mathrm{bd}}$, the symbol $\gamma_0$ being
reserved throughout for the sharp threshold. This proves \eqref{eq:reswindow}, with every entry, the diagonal included,
strictly positive because $\min_kp^\infty_k>0$. Taken over $s\in(0,\delta/2]$, the
same estimate \eqref{eq:resbound-proof} is what \cref{ssec:quantitative} turns into
the a priori bound \eqref{eq:resolvent-bound}.

\emph{Failure for small $\gamma$.} Suppose $L_{ij}<0$ for some $i\ne j$, and let
$\|\cdot\|_2$ be the spectral norm. For $\gamma\|L\|_2<1$ the Neumann series
converges and
\begin{equation}\label{eq:neumann}
(\mathcal I-\gamma L)^{-1}=\mathcal I+\gamma L+\gamma^2L^2(\mathcal I-\gamma L)^{-1}.
\end{equation}
For $\gamma\le1/(2\|L\|_2)$ one has
$\|(\mathcal I-\gamma L)^{-1}\|_2\le(1-\gamma\|L\|_2)^{-1}\le2$, so the remainder
$E(\gamma):=\gamma^2L^2(\mathcal I-\gamma L)^{-1}$ obeys
$\|E(\gamma)\|_{\max}\le\|E(\gamma)\|_2\le2\gamma^2\|L\|_2^2$. Hence
\begin{equation*}
[(\mathcal I-\gamma L)^{-1}]_{ij}=\gamma L_{ij}+E_{ij}(\gamma),\qquad
|E_{ij}(\gamma)|\le2\gamma^2\|L\|_2^2 .
\end{equation*}
For $0<\gamma<\gamma_1:=|L_{ij}|/(2\|L\|_2^2)$ the linear term dominates in modulus
and the entry is negative. Since $|L_{ij}|\le\|L\|_2$ we have
$\gamma_1\le1/(2\|L\|_2)$, so \eqref{eq:neumann} is valid on this range. Positivity
therefore cannot reach $\gamma=0$, and $\gamma_0\ge\gamma_1>0$.

\emph{Metzler case.} If $L$ has no negative off-diagonal entry, then for every
$\gamma>0$ the matrix $\mathcal I-\gamma L$ is a $Z$-matrix, its off-diagonal
entries being $-\gamma L_{ij}\le0$. Its eigenvalues are $1-\gamma\lambda$ with
$\lambda\in\operatorname{spec}(L)$, and since $\Ree\lambda\le0$ each has real part
$1-\gamma\,\Ree\lambda\ge1>0$. A $Z$-matrix all of whose eigenvalues have positive
real part is a nonsingular M-matrix, so $(\mathcal I-\gamma L)^{-1}\ge0$;
irreducibility of $L$ makes $\mathcal I-\gamma L$ an irreducible nonsingular
M-matrix, whose inverse is entrywise strictly positive. Thus $\gamma_0=0$ and
\eqref{eq:reswindow} holds for every $\gamma>0$.
\end{proof}

\subsection{Proof of \cref{thm:limit}}  \label{app:limit}

\begin{proof}
Write $r=P/Q$ with $P,Q$ coprime and $Q(0)\ne0$, so that $r(0)=1$ is finite. Let
$z_1,\dots,z_q$ be the poles of $r$, that is the roots of $Q$, and set
$\rho:=\max_l|z_l|$. A-acceptability places every pole in the open right
half-plane, so for $\gamma>0$ the eigenvalues of $\gamma L$, namely $0$ and
$\{\gamma\lambda:\lambda\in\operatorname{spec}(L_0)\}$, all lie in the closed left
half-plane and avoid the poles. Thus $r(\gamma L)$ is defined for every $\gamma>0$.

By \eqref{eq:blocksplit} with $f=r$, on $\ker L$ the map contributes
$r(0)\,\Pi=\Pi$ for every $\gamma$, and it remains to evaluate $r(\gamma L_0)$ on
$\operatorname{ran}L$ as $\gamma\to\infty$. Fix a Jordan factorization
$L_0=S\bigl(\bigoplus_bJ_b\bigr)S^{-1}$ with $S$ independent of $\gamma$, where the
block $J_b=\lambda_bI_{m_b}+N_b$ has eigenvalue $\lambda_b\in\operatorname{spec}(L_0)$
and nilpotent shift $N_b$ with $N_b^{m_b}=0$. For a single block the
primary-function formula reads
\begin{equation}\label{eq:jordanfun}
r(\gamma J_b)=\sum_{k=0}^{m_b-1}\frac{\gamma^k\,r^{(k)}(\gamma\lambda_b)}{k!}\,N_b^{\,k}.
\end{equation}
The term $k=0$ is $r(\gamma\lambda_b)I_{m_b}$. Because $r$ is rational with the
finite value $r(\infty)$, it admits the Laurent expansion
$r(z)=r(\infty)+\sum_{j\ge1}c_jz^{-j}$ for $|z|>\rho$, and termwise
differentiation, valid on that annulus, gives
\begin{equation}\label{eq:derdecay}
r^{(k)}(z)=O\bigl(|z|^{-(k+1)}\bigr),\qquad|z|\to\infty .
\end{equation}
Since $|\gamma\lambda_b|\ge\gamma\delta$ by \eqref{eq:gap}, once $\gamma>\rho/\delta$
every eigenvalue satisfies $|\gamma\lambda_b|>\rho$ and \eqref{eq:derdecay} applies.
The term $k=0$ therefore tends to $r(\infty)I_{m_b}$, and for $1\le k\le m_b-1$,
\begin{equation*}
\Bigl\|\tfrac{\gamma^k}{k!}\,r^{(k)}(\gamma\lambda_b)\,N_b^{\,k}\Bigr\|
=O\bigl(\gamma^{k}\cdot\gamma^{-(k+1)}\bigr)=O(\gamma^{-1})\longrightarrow0 .
\end{equation*}
This is the quantitative content behind the vanishing of the Jordan
contributions. A-acceptability by itself only bounds $r$ on the left half-plane.
It is the decay rate \eqref{eq:derdecay}, inherited from finiteness of $r(\infty)$,
that overcomes the growth $\gamma^k$ carried by the nilpotent coefficients. Hence
$r(\gamma J_b)\to r(\infty)I_{m_b}$ for each block, and as $S$ is fixed,
$r(\gamma L_0)\to r(\infty)\mathcal I$ on $\operatorname{ran}L$. Combining the two
subspaces through \eqref{eq:blocksplit},
\begin{equation*}
\lim_{\gamma\to\infty}r(\gamma L)=\Pi+r(\infty)(\mathcal I-\Pi),
\end{equation*}
which is \eqref{eq:limitmatrix}. Using $\Pi_{ij}=p^\infty_i$, the off-diagonal
entries of the limit are $(1-r(\infty))p^\infty_i$ and the diagonal entries are
$p^\infty_i+r(\infty)(1-p^\infty_i)$.
\end{proof}

\subsection{Proof of \cref{cor:rinf}}  \label{app:rinf}

\begin{proof}
Write $M_\infty=\Pi+r(\infty)(\mathcal I-\Pi)$ for the limit \eqref{eq:limitmatrix},
and recall $r(\gamma L)\to M_\infty$ entrywise.

(a) If $0\le r(\infty)<1$ then $1-r(\infty)>0$. Every off-diagonal entry
$(1-r(\infty))p^\infty_i$ of $M_\infty$ is then positive, and every diagonal entry
$(1-r(\infty))p^\infty_i+r(\infty)$ is the sum of a positive number and
$r(\infty)\ge0$, hence positive. Put $\mu:=\min_{i,j}(M_\infty)_{ij}>0$. Choose
$\gamma_r$ with $\|r(\gamma L)-M_\infty\|_{\max}<\mu$ for all $\gamma\ge\gamma_r$.
Then each entry of $r(\gamma L)$ is at least
$(M_\infty)_{ij}-\|r(\gamma L)-M_\infty\|_{\max}>(M_\infty)_{ij}-\mu\ge0$, so
$r(\gamma L)>0$ on $[\gamma_r,\infty)$.

(b) If $r(\infty)<0$, the diagonal entry
$(M_\infty)_{ii}=p^\infty_i+r(\infty)(1-p^\infty_i)$ is negative precisely when
$p^\infty_i(1+|r(\infty)|)<|r(\infty)|$, that is
$p^\infty_i<|r(\infty)|/(1+|r(\infty)|)$. At a node $i$ with this property
$(M_\infty)_{ii}=-\eta$ for some $\eta>0$, and entrywise convergence gives
$[r(\gamma L)]_{ii}<-\eta/2<0$ for all $\gamma$ past some $\gamma_+$. Thus
$r(\gamma L)$ has a negative diagonal entry for all large $\gamma$, and
$\{\gamma>0:r(\gamma L)\ge0\}$ is bounded above.

(c) The hypothesis $r'(0)=1$ holds for every method used here, since backward
Euler, the trapezoidal rule and Pad\'e$(0,2)$ all satisfy $r(z)=1+z+O(z^2)$.
Suppose $L_{ij}<0$ for some $i\ne j$. As $Q(0)\ne0$, $r$ is analytic at $0$ with
power series $r(z)=1+z+\sum_{k\ge2}a_kz^k$ of radius $\rho_0=\min_l|z_l|>0$. For
$\gamma\|L\|_2<\rho_0$ the series $r(\gamma L)=\sum_{k\ge0}a_k(\gamma L)^k$
converges, giving
\begin{equation*}
r(\gamma L)=\mathcal I+\gamma L+\gamma^2G(\gamma),\qquad
G(\gamma):=\sum_{k\ge2}a_k\gamma^{k-2}L^k,
\end{equation*}
with $\|G(\gamma)\|_{\max}$ bounded for $\gamma$ near $0$. The $(i,j)$ entry equals
$\gamma L_{ij}+O(\gamma^2)$, which is negative for all sufficiently small
$\gamma>0$. Together with part (b), every nonnegativity window is confined to a
compact subinterval $[\gamma_-,\gamma_+]\subset(0,\infty)$.
\end{proof}

\subsection{Proof of \cref{thm:cayley}}  \label{app:cayley}

\begin{proof}
Recall $r(z)=(1+z/2)/(1-z/2)$, so $r(\infty)=-1$, and abbreviate the
backward-Euler resolvent at step $\gamma/2$ by
\begin{equation}\label{eq:Gdef}
G(\gamma):=\Bigl(\mathcal I-\tfrac{\gamma}{2}L\Bigr)^{-1},
\end{equation}
so that the identity \eqref{eq:cayleyident} reads $r(\gamma L)=-\mathcal I+2\,G(\gamma)$.

\emph{Part (a).} Setting $s'=2/\gamma$ in \eqref{eq:resassemble} gives
$G(\gamma)=\Pi+s'R(s')\to\Pi$ as $\gamma\to\infty$, so $r(\gamma L)\to-\mathcal I+2\Pi$
entrywise. This is \eqref{eq:limitmatrix} with $r(\infty)=-1$. The diagonal limit
at node $i$ is $-1+2p^\infty_i=2p^\infty_i-1$. If $p^\infty_i<1/2$ this equals
$-\eta$ with $\eta>0$, and entrywise convergence yields $\gamma_+<\infty$ with
$[r(\gamma L)]_{ii}<-\eta/2<0$ for all $\gamma>\gamma_+$.

\emph{Part (b).} The term $-\mathcal I$ in \eqref{eq:cayleyident} touches only the
diagonal, so for $i\ne j$ one has $[r(\gamma L)]_{ij}=2\,G_{ij}(\gamma)$. If
$L_{ij}<0$, the small-$\gamma$ failure proved in \cref{app:resolvent}, with
$\gamma$ there replaced by $\gamma/2$, makes $G_{ij}(\gamma)<0$ for all
sufficiently small $\gamma$. Hence $[r(\gamma L)]_{ij}<0$ on $(0,\gamma_-)$ for some
$\gamma_->0$.

\emph{Part (c).} We prove boundedness of the window first, then the nonemptiness
criterion.

The nodes carry a probability vector, so $\sum_ip^\infty_i=1$ over $n\ge2$ nodes.
Unless $n=2$ with $p^\infty=(\tfrac12,\tfrac12)$, some node has $p^\infty_i<1/2$;
on any mesh with $n\ge3$ this is automatic, since
$\min_ip^\infty_i\le1/n<1/2$. Fix such a node. Part (a) then supplies
$\gamma_+<\infty$ with $r(\gamma L)\not\ge0$ for $\gamma>\gamma_+$, and part (b)
supplies $\gamma_->0$ with $r(\gamma L)\not\ge0$ for $\gamma<\gamma_-$. Therefore
\begin{equation*}
\{\gamma>0:r(\gamma L)\ge0\}\subseteq[\gamma_-,\gamma_+],
\end{equation*}
a bounded window, possibly empty.

For nonemptiness, read the sign of $r(\gamma L)=-\mathcal I+2G(\gamma)$ entry by
entry. Off the diagonal, $[r(\gamma L)]_{ij}=2G_{ij}(\gamma)$ is nonnegative if and
only if $G_{ij}(\gamma)\ge0$. On the diagonal,
$[r(\gamma L)]_{ii}=-1+2G_{ii}(\gamma)$ is nonnegative if and only if
$G_{ii}(\gamma)\ge\tfrac12$. Introduce
\begin{equation*}
W_{\mathrm{res}}:=\{\gamma>0:G(\gamma)>0\ \text{entrywise}\},\qquad
W_{\mathrm{diag}}:=\{\gamma>0:G_{ii}(\gamma)\ge\tfrac12\ \text{for all }i\}.
\end{equation*}
Here $W_{\mathrm{res}}$ is the resolvent window of \cref{thm:resolvent} at shift
$\gamma/2$, so $W_{\mathrm{res}}\supseteq[2\gamma_0,\infty)$ by that theorem. Let
$\gamma\in W_{\mathrm{res}}\cap W_{\mathrm{diag}}$. Then every off-diagonal entry of
$G(\gamma)$ is positive, so every off-diagonal entry of $r(\gamma L)$ is positive,
and every diagonal entry of $G(\gamma)$ is at least $\tfrac12$, so every diagonal
entry of $r(\gamma L)$ is nonnegative. Hence $r(\gamma L)\ge0$, and $\gamma$ lies in
the window. Consequently, whenever $W_{\mathrm{res}}\cap W_{\mathrm{diag}}\ne\emptyset$
the window $[\gamma_-,\gamma_+]$ is nonempty. This is the criterion stated in
part (c).

The two constraints pull in opposite directions. As $\gamma\to\infty$,
$G_{ii}(\gamma)\to p^\infty_i<\tfrac12$, so $W_{\mathrm{diag}}$ excludes all large
steps, whereas $W_{\mathrm{res}}$ excludes all small steps in the EM case by
part (b). Overlap requires the two mechanisms to meet, and \cref{rem:cnempty}
records that on every mesh tested they do not.
\end{proof}

\subsection{Proof of \cref{thm:pade}}  \label{app:pade}

\begin{proof}
The positivity claim \eqref{eq:padewindow} is \cref{cor:rinf}(a) with $r_{02}(\infty)=0$, whose limit is $\Pi>0$; hence $r_{02}(\gamma L)>0$ for all
$\gamma\ge\gamma_r$. The small-$\gamma$ failure is \cref{cor:rinf}(c): since
$r_{02}(z)=1+z+O(z^2)$ has $r_{02}'(0)=1$, a negative off-diagonal entry of $L$
produces a negative entry of $r_{02}(\gamma L)$ for all sufficiently small
$\gamma>0$, so $\gamma_r>0$ in the EM case proper.
\end{proof}

\subsection{Proof of \cref{prop:order}} \label{app:order}

\begin{proof}
For the directional factors, the Taylor expansion of the Pad\'e$(0,2)$
approximant gives
\begin{equation}
\left(\mathcal I-sA_\alpha+\frac{s^2}{2}A_\alpha^2\right)^{-1} = \mathcal I+sA_\alpha+\frac{s^2}{2}A_\alpha^2+O(s^3), \qquad \alpha\in\{x,y\}.
\end{equation}
Since
\begin{equation}
e^{sA_\alpha} = \mathcal I+sA_\alpha+\frac{s^2}{2}A_\alpha^2+O(s^3),
\end{equation}
it follows that
\begin{equation} \label{eq:app-dir-error}
\Phi_\alpha(s) = e^{sA_\alpha}+O(s^3).
\end{equation}
Thus, for the directional half-steps,
\begin{equation}
\Phi_\alpha\!\left(\frac{\Delta t}{2}\right) = e^{\frac{\Delta t}{2}A_\alpha} +O(\Delta t^3).
\end{equation}

For the central factor, the trapezoidal expansion gives
\begin{equation}
\left(\mathcal I-\frac{\Delta t}{2}A_{xy}\right)^{-1} \left(\mathcal I+\frac{\Delta t}{2}A_{xy}\right) = \mathcal I+\Delta t A_{xy} + \frac{\Delta t^2}{2}A_{xy}^2 + O(\Delta t^3),
\end{equation}
whereas
\begin{equation}
e^{\Delta t A_{xy}} = \mathcal I+\Delta t A_{xy} + \frac{\Delta t^2}{2}A_{xy}^2 + O(\Delta t^3).
\end{equation}
Hence
\begin{equation} \label{eq:app-central-error}
\Phi_{xy}(\Delta t) = e^{\Delta t A_{xy}} + O(\Delta t^3).
\end{equation}
Therefore every factor in \eqref{eq:dfadi} differs from its corresponding
exponential by $O(\Delta t^3)$.

Let
\begin{equation}
S(\Delta t) = e^{\frac{\Delta t}{2}A_x} e^{\frac{\Delta t}{2}A_y} e^{\Delta t A_{xy}}
e^{\frac{\Delta t}{2}A_y} e^{\frac{\Delta t}{2}A_x}
\end{equation}
denote the corresponding symmetric exponential composition. Since the number of
factors is fixed and the factors remain uniformly bounded for sufficiently small
$\Delta t$, replacing the five exponential factors by their rational
approximations changes the product by only $O(\Delta t^3)$. Thus
\begin{equation}
\Phi_x\!\left(\frac{\Delta t}{2}\right)  \Phi_y\!\left(\frac{\Delta t}{2}\right)
 \Phi_{xy}(\Delta t)  \Phi_y\!\left(\frac{\Delta t}{2}\right)  \Phi_x\!\left(\frac{\Delta t}{2}\right) = S(\Delta t) + O(\Delta t^3).
\end{equation}

The product $S(\Delta t)$ is the symmetric Strang composition of the directional
and mixed generators. Hence, without requiring the individual operators to
commute,
\begin{equation}
S(\Delta t) = e^{\Delta t(A_x+A_y+A_{xy})} + O(\Delta t^3).
\end{equation}
Consequently, the DF--ADI step has local temporal error $O(\Delta t^3)$ and
therefore global temporal error $O(\Delta t^2)$.

For the spatial discretization, the directional discretizations are second-order
accurate, while Proposition~10 in \cite{ItkinDF2026} (P10) establishes that the
second-order coupling used in the central factor has spatial defect
\begin{equation}
O\!\left(\max(h_x^2,h_y^2)\right).
\end{equation}
Therefore, under the assumptions of P10, in particular the log-Lipschitz condition
(that the discrete density is log-Lipschitz on the stencil required for its positivity claim), the spatial defect of the assembled scheme is
\begin{equation}
O\!\left(\max(h_x^2,h_y^2)\right).
\end{equation}
Combining the temporal and spatial errors, and assuming the resulting scheme is stable, gives
\begin{equation}
\|\bm p(t_n)-\bm p^{\,n}\| \leq C\left[ \Delta t^2+ \max(h_x^2,h_y^2) \right].
\end{equation}
Hence, under the joint refinement $\Delta t\sim h$, with $h=\max(h_x,h_y)$, both
contributions are $O(h^2)$, and the assembled DF--ADI scheme is second-order
accurate in both time and space.
\end{proof}

\subsection{Proof of \cref{prop:massadi}}  \label{app:massadi}

\begin{proof}
This is immediate from the mass corollary, applied factor by factor, since every factor of the scheme is a rational function $r$ of a zero-column-sum generator with $r(0)=1$.
\end{proof}

\subsection{Proof of \cref{prop:posadi}}  \label{app:posadi}

\begin{proof}
The directional Pad\'e factors are nonnegative for $\tfrac{\Delta t}{2}\ge\gamma_r^{(\alpha)}$ due to the Pad\'e windows, which lift to the Kronecker structure with the same thresholds. The central factor is nonnegative for $\Delta t\le\Theta$. Therefore, within the intersection of these windows, every factor is entrywise nonnegative, rendering the composite propagator nonnegative and column-stochastic.

For any $p\ge 0$ the nonnegativity of $S_{\mathrm{ADI}}(\Delta t)$ gives
$S_{\mathrm{ADI}}(\Delta t)\,p\ge 0$, and the unit column sums give
\begin{equation}
\big\|S_{\mathrm{ADI}}(\Delta t)\,p\big\|_1
= \mathbf{1}^\top S_{\mathrm{ADI}}(\Delta t)\,p
= \mathbf{1}^\top p
= \|p\|_1.
\end{equation}
Hence $S_{\mathrm{ADI}}(\Delta t)$ is an $\ell_1$ isometry on the nonnegative
cone, and in particular $\ell_1$-nonexpansive there.

\end{proof}

\end{document}